\newtheorem{theorem}{Theorem}[section]
\newtheorem{proposition}[theorem]{Proposition}
\newtheorem{lemma}[theorem]{Lemma}
\newtheorem{corollary}[theorem]{Corollary}
\theoremstyle{definition}
\newtheorem{definition}[theorem]{Definition}
\newtheorem{assumption}[theorem]{Assumption}
\numberwithin{equation}{section}
\numberwithin{theorem}{section}
\renewcommand{\epsilon}{\varepsilon}
\newcommand{\mc}[1]{{\mathcal #1}}
\newcommand{\bb}[1]{{\mathbb #1}}
\newcommand{\ms}[1]{{\mathscr #1}}
\newcommand{\bs}[1]{{\boldsymbol #1}}
\newcommand{\R}{\mathbb{R}}
\newcommand{\id}{{1 \mskip -5mu {\rm I}}}
\newcommand{\de}{\mathop{}\!\mathrm{d}}
\title{Large deviations  for Kac-like walks}
\author[G.\ Basile]{Giada Basile}
\address{Giada Basile \hfill\break \indent
   Dipartimento di Matematica, Universit\`a di Roma `La Sapienza'
   \hfill\break \indent
   P.le Aldo Moro 2, 00185 Roma, Italy}
 \email{basile@mat.uniroma1.it}
 \author[D.\ Benedetto]{Dario Benedetto}
 \address{Dario Benedetto \hfill\break \indent
   Dipartimento di Matematica, Universit\`a di Roma `La Sapienza'
   \hfill\break \indent
   P.le Aldo Moro 2, 00185 Roma, Italy}
 \email{benedetto@mat.uniroma1.it}
\author[L.\ Bertini]{Lorenzo Bertini}
\address{Lorenzo Bertini \hfill\break \indent
   Dipartimento di Matematica, Universit\`a di Roma `La Sapienza'
   \hfill\break \indent
   P.le Aldo Moro 2, 00185 Roma, Italy}
 \email{bertini@mat.uniroma1.it}
  \author[C.\ Orrieri]{Carlo Orrieri}
 \address{Carlo Orrieri \hfill\break \indent
	 Dipartimento di Matematica, Universit\`a di Pavia
	\hfill\break \indent
	Via Ferrata 5, 27100 Pavia (PV), Italy   
    }
 \email{carlo.orrieri@unipv.it}
\begin{document}
\begin{abstract}
  We introduce a Kac's type walk whose  rate  of binary collisions  preserves the total momentum but not the kinetic energy.
  In the limit of large number of particles we describe the dynamics in terms of empirical measure and flow, proving the corresponding large deviation principle.
  The associated  rate function has an explicit expression. As a byproduct of this analysis, we provide a gradient flow formulation of the Boltzmann-Kac equation. 
\end{abstract}

\keywords{Kac model, Large deviations, Boltzmann equation, Gradient flow}
\subjclass[2010]{35Q20 
  82C40 
  60F10 
}

\maketitle
\thispagestyle{empty}

\section{Introduction}
\label{s:0}
The statistics of rarefied gas is described, at the kinetic level, by the Boltzmann equation.
It has become paradigmatic since
it encodes most of the conceptual and technical issues in the description of the
statistical properties for out of equilibrium systems. In the spatially homogeneous case the Boltzmann equation reads
\begin{equation}\label{eq:Bgen}\begin{split}
    \partial_t f_t(v)\de v
    = &
               \iint \!
               r(v', v'_*;\de v, \de v_*) f_t(v') f_t(v_*') \de v' \de v'_*\\
               &-
      f_t(v) \de v\iint\!\! r(v,v_*;\de v', \de v'_*)   f_t(v_*)\de v_*,
  \end{split}\end{equation}  
where $f(v)\de v$ is the one-particle velocity  distribution,
$v, v_*$ (resp.$v', v'_*$) are the incoming (resp.  outgoing) velocities in the binary collision and $r$ is the collision rate.
In the classical  case the collision rate is concentrated on the set of velocities satisfying the constrains of momentum and energy conservation
$v+v_*= v' + v'_*$ and  $|v|^2+|v_*|^2= |v'|^2 + |v'_*|^2$, and it satisfied the detailed balance condition
 $M(\de v) M(\de v_*) r(v,v_*;\de v',\de v_*')=M(\de v') M(\de v_*')r(v',v_*';\de v,\de v_*)$, where $M(\de v)$ is the Maxwellian.

In the pioneering work \cite{K}, Kac derived \eqref{eq:Bgen} from a stochastic model of $N$ particles, interacting via binary collisions satisfying the conservation of the energy.
This result can be seen as the law of large number for the empirical measure of the microscopic dynamics.
In particular, Kac's result established the validity of the \emph{Stosszahlansatz}.
We refer to \cite{Sn} for further developments and references.
The analysis of the corresponding fluctuations, in the central limit regime, has been carried out in \cite{Ta, Uc1, Uc2}.
Regarding the large deviation asymptotics, we point out that while the law of large numbers depends on the validity of the  \emph{Stosszahlansatz} with probability converging to one as $N\to \infty$, the large deviation principle requires that it holds with probability super-exponentially close to one for $N$ large.
It is  therefore a non trivial improvement of the Kac's result.
The first statement in this direction has been obtained in \cite{Le}, where a large deviation upper bound is proven. 
In \cite{Re}  a large deviation result  has been derived   for a stochastic model in the setting of one dimensional spatially dependent  Boltzmann equation with discrete  velocities.
A main   issue behind the proof of large deviation lower bound is to establish a law of large number for a perturbed dynamics, proving in particular the uniqueness of the perturbed Boltzmann equation,
which in general fails.

Regarding the Newtonian dynamics, in view of the previous discussion, the validity of a large deviation principle is a most challenging issue. 
The general structure of the rate function associated to the Boltzmann equation for hard sphere is discussed in \cite{Bou}.
A derivation from Newtonian dynamics is presented in \cite{BGSS2}, see also \cite{BGSS1} for a comparison of the rate function derived in \cite{BGSS2} with the one proposed in \cite{Bou}.  

We focus on the  large deviation principle  for  Kac-type spatially homogeneous models. Beside the empirical density, it is convenient to introduce another observable, the empirical flow, which records the incoming and outgoing velocities in the collisions, i.e., letting $N$ the number of particles,
$N q_t(\de v,\de v_*, \de v', \de v'_*)\de t$ is the number of collisions in the time-window $[t, t+\de t]$ with incoming velocities in $\de v\,\de v_*$ and outgoing velocities in $\de v'\,\de v'_*$. 
In particular, the mass of the empirical flow  is the normalized  total number of collisions.
The empirical measure and flow are linked by the balance equation
\begin{equation}\label{cer}
  \begin{split}
\partial_t f_t(v) \de v = \int \!
               \Big[q_t(\de v', \de v'_*;\de v, \de v_*) + q_t(\de v', \de v'_*;\de v_*, \de v) \\
                 - q_t(\de v,\de v_*;\de v', \de v'_*)-q_t(\de v_*,\de v;\de v', \de v'_*)\Big],
              \end{split} \end{equation}
which expresses the conservation   of the mass. The idea of considering the pair of observables empirical measure and flow has been exploited in the context of Markov processes in
\cite{BFG, BB, GR}.
In this setting  the rate function relative to the pair empirical measure and flow has a closed form
and it is equal to  $I(f, q)=I_0(f_0) + J(f,q)$, where $I_0(f_0)$ takes into account the fluctuations of  the initial datum and the dynamical term $J$ is given by
\begin{equation}\label{Jr}
  J(f,q)= \int_0^T \!\de t\int \Big\{ \de q_t \log \frac{\de q_t}{\de q^{f_t}} -
  \de q_t + \de q^{f_t}\Big\},
\end{equation}  
where $q^f(\de v, \de v_*, \de v', \de v'_*)= \frac 1 2 f(v)f(v_*)\de v\,\de v_* r(v, v_*; \de v', \de v'_*) $. By projecting $J$ on the empirical density $f(v)dv$ one recovers the variational expression for the rate function  obtained for  the empirical measure in \cite{Re, Le, Bou}.

The rate  function \eqref{Jr} has a simple interpretation in terms of Poisson point processes.  Let $\{Z^N\}$  be a sequence of  Poisson random variables with parameters $N\lambda$. By Stirling's formula,
for $N$ large
$$
\bb P (Z^N =N q)\approx \exp\{-N[q\log(q/\lambda) -q+\lambda]\}.
$$
Therefore, for $N$ large the statistics of the collisions in the Kac's walk can be thought as sampled from a Poisson point process with intensity $N q^f$, where now $f$ and $q$ are related by the balance equation \eqref{cer}.

Here we implement this program for  a  model of $N$ particles, interacting via stochastic binary collisions satisfying the conservation of the momentum, but not of the kinetic energy.
Such a model is relevant, as example, in the case  of a   molecular gas  when the internal degrees of freedom are disregarded.
We do not assume a detailed balance condition and the corresponding Boltzmann-Kac equation is of the form \eqref{eq:Bgen}.
We prove the large deviation upper bound with the rate function introduced above.
The proof of the matching lower bound is achieved when $q$ has bounded second moment. From a technical viewpoint, the advantage of momentum conservation with respect to energy conservation is the linearity of the constraint, which allows to use convolution in the approximation argument for the lower bound.
We also derive the variational formula for  the projection of the rate function  on the empirical measure. 

In the context of i.i.d.\ Brownians, the connection between the large deviation rate function and the gradient flow formulation of the heat equation is discussed in \cite{ADPZ}, see also \cite{BBB, MPR} for the case of i.i.d.\ reversible  Markov chains. Here  we derive a gradient flow formulation for the Boltzmann-Kac equation from the large deviation rate function \eqref{Jr}. On general grounds, a gradient flow formulation of evolution equations is based on the choice of a pair of functions in Legendre duality. In \cite{Er2} it is shown how this pair can be chosen so that the Boltzmann-Kac equation is the gradient flow of the entropy with respect to a suitable distance on the set of probability measures.
As we here show, the choice in \cite{Er2} is not the one associated to the large deviation rate function. Instead, analogously to  \cite{BBB}, in  the formulation here presented
the non-linear Dirichlet form associated to the Boltzmann-Kac equation plays the role of 
the slope of the entropy.

\section{Notation and main result}
\label{s:1}

\subsection*{Kac walk}
For a Polish space $X$ we denote by $\mc M(X)$ the set of positive
Radon measures on $X$ with finite mass; we consider $\mc M(X)$
endowed with the weak* topology and the associated Borel
$\sigma$-algebra.

Given $N\ge 2$, a \emph{configuration} is defined by $N$ velocities in
$\bb R^d$. The configuration space is therefore given by
$\Sigma_N:=(\bb R^d)^N$. Elements of $\Sigma_N$ are denoted by $\bs v
=(v_k)_{k=1,\ldots,N}$, with $v_k\in \bb R^d$. 
The Kac walk that we here consider is the
Markov processes on $\Sigma_N$ whose generator acts on continuous and
bounded functions $f\colon \Sigma_N\to \bb R$ as 
\begin{equation*}
  \ms L_N f = \frac 1N \sum_{\{i,j\}} L_{ij} f
\end{equation*}
where the sum is carried over the unordered pairs $\{i,j\}\subset \{1,..,N\}$, $i\neq j$, and 
\begin{equation*}
  L_{ij}f \, (\bs v) = \int r(v_i,v_j;dv',dv'_*)
  \big[ f(T_{ij}^{v',v'_*} \bs v) - f (\bs v) \big].
\end{equation*}
Here
\begin{equation*}
  \big(T_{ij}^{v',v'_*} \bs v\big)_k :=
  \begin{cases}
    v' & \textrm{if } k=i \\
    v_*'& \textrm{if } k=j \\
    v_k & \textrm{otherwise.} 
  \end{cases}
\end{equation*}
and  the \emph{collision rate} $r$ is a continuous function from $\bb
R^d\times \bb R^d$ to $\mc M(\bb R^d\times \bb R^d)$.
Finally,   the  \emph{scattering rate}  $\lambda\colon \bb R^d \times \bb R^d \to \bb R_+$ is
  \begin{equation}\label{sr}
    \lambda(v, v_*)=\int r(v, v_*; \de v', \de v'_*).
  \end{equation}


  We assume that $r$ satisfies the following conditions in which we set
   $\ms V :=\{v+v_*=v'+v_*'\}\subset (\bb R^d)^2\times (\bb R^d)^2$.
\begin{assumption}\label{ass:1}${}$
\begin{itemize}
\item [(i)]\emph{Conservation of momentum.} For each
  $(v,v_*) \in \bb R^d\times \bb R^d$ the measure $r(v,v_*;\cdot)$ is
  supported on the hyperplane
  $\{(v',v_*') \in \bb R^d \times \bb R^d \colon  v'+v_*' = v+v_*\}$.
\item [(ii)]\emph{Collisional symmetry.} For each
  $(v,v_*) \in \bb R^d\times \bb R^d$ the scattering kernel satisfies
  $r(v,v_*;\de v',\de v_*')=r(v_*, v; \de v', \de v'_*)=r(v,v_*;\de v'_*,\de v')$.
\item [(iii)]\emph{Non degeneracy of  the scattering kernel.}
  There exists a density $B \colon\ms V\to \bb R_+$
  such that $r(v, v_*, \de v', \de v'_*)= B(v, v_*, w') \de w'$, where
  $w'=(v' -v'_*)/\sqrt 2$.
  Moreover, there exists $c_0>0$ such that  $B(v, v_*, w')\geq c_0\exp\{- c_0 \, |w'|^2\}$. 
\item [(iv)] \emph{Gaussian tails.} There exists $C<+\infty$, $\eta >0$, $\gamma\in [0,2)$ such that for any $v$, $v_* \in \bb R^d$
  \begin{equation*}
    \int\!dw'\, B(v,v_*,w') e^{\eta  |w'|^2} \leq  C (1+ |v-v_*|^\gamma).
    \end{equation*} 
  \item[(v)]\emph{Point-wise bound}. There exists a constant $C<+\infty$ such that $ B(v,v_*,w')\leq Ce^{C(|v|^2+|v_*|^2 + |w'|^2)}$.
\end{itemize}
\end{assumption}
We remark that we do not assume balance conditions. Observe that item (iv) implies
\begin{equation}\label{(v)}
\lambda(v,v_*)\leq C(1 + |v-v_*|^\gamma).
\end{equation}  
An example of a  scattering kernel meeting the above conditions is $B(v, v_*, w')= (1 + |v-v_*|) e^{-|w'|^2}$.

We denote by $(\bs v(t))_{t\ge 0}$ the Markov process generated by
$\ms L_N$. Let
$\Sigma_{N,0} := \big\{ \bs v \in \Sigma_N \colon N^{-1}\sum_{k} v_k=0 \big\}$
be the subset of configurations with zero average velocity, that  it is invariant by the dynamics in view of the conservation of the momentum.
By the positivity of the collision rate (see Assumption \ref{ass:1}, item (iii)),
the Kac walk  is ergodic when restricted to $\Sigma_{N,0}$.
We shall consider the Kac's walk restricted to $\Sigma_{N,0}$.

Fix hereafter $T>0$. Given a probability $\nu$ on $\Sigma_{N,0}$ we denote
by $\bb P_\nu^N$ the law of the Kac walk on the time interval $[0,T]$.
Observe that $\bb P_\nu^N$ is a probability on the Skorokhod
space $D([0,T];\Sigma_{N,0})$.
As usual if $\nu=\delta_{\bs v}$ for some $\bs v \in \Sigma_{N,0}$, the
corresponding law is simply denoted by $\bb P_{\bs v}^N$.

\subsection*{Empirical measure and flow}
We denote by $\ms P_0(\bb R^d)$ the set of probability measures on $\bb R^d$
with zero mean.
We consider $\ms P_0(\bb R^d)$ as a closed subset of the space of probability measure with finite mean equipped with the $W_1$ Wasserstein distance. Then  $\ms P_0(\bb R^d)$ endowed with the relative  topology is a Polish space.
The \emph{empirical measure} is the map
$\pi^N\colon \Sigma_{N,0} \to \ms P_0(\bb R^d)$ defined by
\begin{equation}
  \label{1}
  \pi^N(\bs v) :=\frac 1N \sum_{i=1}^N\delta_{v_i}.
\end{equation}

Let $D\big([0,T]; \ms P_0(\bb R^d)\big)$ the set of
$\ms P_0(\bb R^d)$-valued c{\'a}dl{\'a}g paths endowed with the
Skorokhod topology and the corresponding Borel $\sigma$-algebra.
With a slight abuse of notation we denote also by $\pi^N$ the map 
from $D\big([0,T]; \Sigma_{N,0} \big)$ to $D\big([0,T]; \ms P_0(\bb
R^d)\big)$ defined by $\pi^N_t(\bs v(\cdot)):=\pi^N(\bs v(t))$, $t\in
[0,T]$. 

We denote by $\ms M$ the (closed) subset of
$\mc M([0,T]\times (\bb R^d)^2\times (\bb R^d)^2\big)$ given by the
measures $Q$ that satisfy
$$Q(\de t; \de v,\de v_*, \de v',\de v_*')=Q(\de t; \de v_*,\de v, \de v',\de v_*')=Q(\de t; \de v,\de v_*, \de v'_*,\de v').$$
The \emph{empirical flow} is the map $Q^N\colon D\big([0,T]; \Sigma_{N,0} \big) \to \ms M$
defined by
\begin{equation}
  \label{2}
  Q^N(\bs v) (F) :=\frac 1N
  \sum_{\{i,j\}} \sum_{k\ge 1} F\big(\tau^{i,j}_k;
  v_i(\tau^{i,j}_k-),v_j({\tau^{i,j}_k}-),
  v_i(\tau^{i,j}_k),v_j(\tau^{i,j}_k)\big) 
  \quad 
\end{equation}
where $F\colon [0,T]\times (\bb R^d)^2\times (\bb R^d)^2\to \bb R$ satisfies $F(t;v,v_*,v',v'_*) =F(t;v_*,v,v',v'_*) = F(t;v,v_*,v'_*,v')$, and $F$ is continuous and bounded, 
while $(\tau^{i,j}_k)_{k\ge 1}$ are the
jump times of the pair $(v_i,v_j)$. Finally, $v_i(t-) = \lim_{s\uparrow t} v_i(s)$. 

For each $\bs v \in \Sigma_{N,0}$ with $\bb P^N_{\bs v}$ probability
one the pair $(\pi^N,Q^N)$ satisfies the following
balance equation that express the conservation of probability. For each
$\phi\colon [0,T]\times \bb R^d\to \bb R$ bounded, continuous, and
continuously differentiable with respect to time 
\begin{equation}
  \label{bal1}
  \begin{split}
  &\pi^N_T(\phi_T)-\pi^N_0(\phi_0)-\int_0^T\! \de t\, 
  \pi^N_t(\partial_t \phi_t)\\
  &\qquad 
  +\int Q^N(\de t;\de v,\de v_*,\de v',\de v_*')\big[ \phi_t(v)+\phi_t(v_*)
  -\phi_t(v')-\phi_t(v_*')\big] =0.
  \end{split}
\end{equation}
In view of the conservation of the momentum, the
measure $Q^N(\de t;\cdot)$ is supported on the hyperplane
$\ms V $.

\subsection*{The rate function}
Let $\ms S$ be the (closed) subset of
$D\big([0,T]; \ms P_0(\bb R^d)\big)\times \ms M$ given by elements
$(\pi,Q)$ that satisfies the balance equation
\begin{equation}
  \label{bal}
  \begin{split}
  &\pi_T(\phi_T)-\pi_0(\phi_0)-\int_0^T\! \de t\, 
  \pi_t(\partial_t \phi_t)\\
  &\qquad 
  +\int Q(\de t;\de v,\de v_*,\de v',\de v_*')\big[ \phi_t(v)+\phi_t(v_*)
  -\phi_t(v')-\phi_t(v_*')\big] =0
  \end{split}
\end{equation}
for each $\phi:[0,T]\times \bb R^d\to \bb R$ continuous, bounded and continuously differentiable in $t$, with bounded derivative.
We consider $\ms S$ endowed with the relative topology and the corresponding Borel $\sigma$-algebra.

For $\pi\in D\big([0,T]; \ms P_0(\bb R^d)\big)$ let $Q^\pi$ be
the measure defined by
\begin{equation}
  \label{4}
  Q^\pi(\de t;\de v,\de v_*,\de v',\de v_*') := \frac 1 2 dt \, \pi_t(\de v) \pi_t(\de v_*)
  \, r(v,v_*;\de v',\de v_*')
\end{equation}
and observe that $Q^\pi(\de t,\cdot)$ is supported on $\ms V$.

\begin{definition}
  \label{def:sac}
  Let $\ms S_\mathrm{ac}$ be the subset of $\ms S$ given by the elements
  $(\pi,Q)$ that satisfy the following conditions:
  \begin{itemize}
  \item [(i)] $\pi\in C\big([0,T];\ms P_0(\bb R^d)\big)$;
  \item [(ii)] $\sup_{t\in [0,T]} \pi_t (\zeta) <+\infty$, where
    $\zeta(v) = |v|^2$;
  \item [(iii)] $Q\ll Q^\pi$.
  \end{itemize}
\end{definition}

Observe that by item (iv) of Assumption \ref{ass:1}, condition (ii)
implies that if $(\pi,Q)\in \ms S_\mathrm{ac}$ then $Q^\pi$ is a
finite measure.
Moreover, by choosing positive functions $\phi$ not depending on $t$ in the balance
equation \eqref{bal} and neglecting the loss term we obtain
$$
\pi_t(\phi)\leq \pi_0(\phi) +2\int_0^t\int Q(\de s, \de v, \de v_*, \de v', \de v'_*) \phi(v'). 
$$
Since $Q\ll Q^\pi$ and, by Assumption \ref{ass:1}, item (iii), the  marginal on $v'$ of $Q^\pi$ is absolutely continuous with respect to the
Lebesgue measure, 
we deduce that
$\pi_0\ll \de v$ implies $\pi_t\ll \de v$, for any $t\geq 0$. As a
consequence, also $Q$ is absolutely continuous with respect to the
Lebesgue measure on $[0,T]\times \ms V$.

The dynamical rate function $J\colon \ms S \to [0,+\infty]$ is defined by
\begin{equation}
  \label{5}
  J(\pi,Q):=
  \begin{cases}
    {\displaystyle 
    \int  \de Q^\pi \Big[ 
    \, \frac{\de Q\phantom{^\pi}}{\de Q^\pi} \log \frac{\de Q\phantom{^\pi}}{\de Q^\pi} -
    \Big( \frac{\de Q\phantom{^\pi}}{\de Q^\pi}  -1\Big)\Big]  } & \textrm{if } (\pi,Q)\in \ms
    S_\mathrm{ac}\\ \\
    + \infty& \textrm{otherwise. } 
  \end{cases}
\end{equation}

In order to obtain a large deviation principle, chaotic initial conditions are not sufficient but we need that the empirical measure at time zero satisfies a large deviation principle.
Referring  to \cite{CCL-RLV} for a discussion on \emph{entropically chaotic} initial conditions, we next provide an example of a class of allowed initial data.

\begin{assumption}\label{ass:2}
  Given  $m\in \ms P_0(\bb R^d)$ set $\mu^N=m^{\otimes N}$ and choose as initial distribution of the Kac's walk the probability on $\Sigma_{N,0}$ given by
$\nu^N = \mu^N (\  \cdot\  \vert  \sum_{i} v_i = 0)$. We assume that 
$m$ is absolutely continuous with respect to
the Lebesgue measure and still denote by $m$ its density.
We further more  assume that there exists $\gamma>0$ such that
\begin{itemize}
  \item[(i)] $\int \de v\, m(v)
\exp\{\gamma |v|^2\} < +\infty$; 
\item[(ii)]
the Fourier transform of $m(v)\exp\{\gamma |v|^2\}$
is in $L^1(\bb R^d)$;
\item[(iii)]$m(v)\geq \gamma \exp\{-\frac 1 \gamma |v|^2\}$.
\end{itemize}
\end{assumption}

Given two probabilities $\mu_1, \mu_2 \in\ms P_0(\bb R^d)$, the relative entropy
$H(\mu_2\vert\mu_1)$ is defined as $H(\mu_2\vert\mu_1)=\int \de \mu_1 \rho\log\rho$, where $\de \mu_2=\rho\, \de \mu_1$, understanding that $H(\mu_2|\mu_1)=+\infty$ if $\mu_2$ is not absolutely continuous with respect to $\mu_1$.

Denoting by $\hat{\ms S}$  the set of paths $(\pi, Q)\in \ms S$ such that
  \begin{equation}\label{QV}
\displaystyle \int Q(\de t; \de v,\de v_*,\de v',\de v'_*)|v+v_*|^2  < +\infty,
  \end{equation}
and letting $I\colon \ms S\to [0,+\infty]$ be the functional defined by
   \begin{equation}\label{ihj}
     I (\pi,Q) := H(\pi_0\vert m) + J(\pi,Q), 
   \end{equation}
  the large deviation principle for the Kac's walk is stated as follows.
  
\begin{theorem}
  \label{t:1}
  Let $\nu^N$ as in Assumption \ref{ass:2}. Then 
   for each closed $C \subset  \ms S$, respectively each open $A\subset  \ms S$, 
   \begin{equation*}
     \begin{split}
      &  \varlimsup_{N\to \infty} \frac 1N \log \bb P^N_{\nu^N}
      \big( (\pi^N,Q^N)\in C \big) \le - \inf_C \,I, \\
      &  \varliminf_{N\to \infty} \frac 1N \log \bb P^N_{\nu^N}
     \big( (\pi^N,Q^N)\in A \big) \ge - \inf_{A\cap \hat{\ms S}} \,I.
     \end{split}    
   \end{equation*}
\end{theorem}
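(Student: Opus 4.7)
The plan is to establish both bounds via the standard Donsker--Varadhan / Kipnis--Landim machinery for Markov jump processes, applied to the pair (empirical measure, empirical flow) rather than to the measure alone. Decompose $I(\pi,Q)=H(\pi_0|m)+J(\pi,Q)$ and treat the two contributions jointly through a tilted change-of-measure representation. The initial-datum LDP with rate $H(\cdot|m)$ follows from Sanov's theorem for $\mu^N=m^{\otimes N}$, transferred to the conditional law $\nu^N$ by a local central limit theorem for $\sum_i v_i$; Assumption \ref{ass:2}(i)--(iii) provides exactly the exponential moments, $L^1$ Fourier transform, and Gaussian lower bound needed for the subexponential CLT correction. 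The rest of the argument then concentrates on the dynamical term $J$.

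For the upper bound I would first prove exponential tightness of $(\pi^N,Q^N)$. The velocity moments are controlled along the dynamics by an exponential supermartingale based on $\exp\{\alpha\sum_i|v_i|^2\}$ for small $\alpha>0$, using the Gaussian tails of Assumption \ref{ass:1}(iv); combined with \eqref{(v)} this gives uniform control on the mass and moments of $Q^N$, hence tightness in $\ms S$. Next, for each bounded continuous $F$ on $[0,T]\times\ms V$ satisfying the symmetries of \eqref{2}, Dynkin's formula produces the exponential $\bb P^N_{\bs v}$-martingale
\begin{equation*}
M^{N,F}_T := \exp\!\Big\{N\Big[Q^N(F)-\tfrac{1}{2}\!\int_0^T\!\de t\,\pi^N_t\otimes\pi^N_t\Big(\!\!\int\! r(v,v_*;\de v',\de v'_*)(e^F-1)\Big)\Big]\Big\}.
\end{equation*}
Coupling a Chebyshev bound for $M^{N,F}$ with the Sanov LDP for $\pi^N_0$ and then optimizing over $F$ gives a variational upper bound whose Legendre dual is precisely $H(\pi_0|m)+J(\pi,Q)$ on $\ms S_\mathrm{ac}$ and $+\infty$ elsewhere; the constraint $Q\ll Q^\pi$ enters exactly at the step where the Legendre transform is finite.

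For the lower bound, given $(\pi,Q)\in\hat{\ms S}$ with $I(\pi,Q)<\infty$, the strategy is to tilt the dynamics by $F=\log(\de Q/\de Q^\pi)$ so that $(\pi,Q)$ becomes typical under $\bb Q^N_F\propto M^{N,F}_T\,\bb P^N_{\nu^N}$; a relative-entropy / law of large numbers argument for the tilted process then delivers the matching lower bound. The canonical $F$ is generically unbounded and the tilted kernel $e^F r$ need not yield a well-posed Boltzmann--Kac equation, so I would construct an approximating sequence $(\pi^n,Q^n)\to(\pi,Q)$ with $I(\pi^n,Q^n)\to I(\pi,Q)$ for which $F^n:=\log(\de Q^n/\de Q^{\pi^n})$ is bounded and continuous, the associated tilted equation has a unique solution equal to $(\pi^n,Q^n)$, and a propagation-of-chaos argument gives the LLN for the perturbed Kac walk. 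The approximation is based on convolution in the momentum variable $v+v_*$ and in the outgoing relative velocity $w'$: the linearity of the momentum constraint keeps the convolved objects supported on $\ms V$, while the Gaussian lower bound in Assumption \ref{ass:1}(iii) prevents the mollified rate from degenerating.

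The main obstacle is exactly this approximation step: producing $(\pi^n,Q^n)$ that simultaneously (i) recovers the rate function, (ii) has bounded smooth tilt $F^n$, and (iii) makes the tilted Boltzmann--Kac equation well-posed with $(\pi^n,Q^n)$ as its unique solution. The second-moment condition \eqref{QV} that defines $\hat{\ms S}$ is what allows the Gronwall-type uniqueness estimate for the tilted equation to close, since it delivers the kinetic-energy control needed to dominate the quadratic collision integral after the convolution; this is the structural reason the matching lower bound is obtained only on $\hat{\ms S}$. Once the approximation is in hand, the LLN for the perturbed dynamics and the standard diagonal passage $n\to\infty$ conclude the proof using only the bounds already guaranteed by Assumption \ref{ass:1}.
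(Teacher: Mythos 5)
Your overall architecture coincides with the paper's: exponential tightness via a supermartingale built on $\exp\{\alpha\sum_i|v_i|^2\}$, an upper bound from the exponential martingale $M^{N,F}$ combined with a tilt of the initial datum and the local CLT for the conditioning on $\sum_i v_i=0$, a minimax/Legendre-duality identification of the rate function, and a lower bound via tilted dynamics for ``nice'' paths followed by an approximation and a diagonal argument. Two points, however, need attention.

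First, the approximation step as you describe it is incomplete. Convolution alone does not produce a bounded, continuous tilt $F^n=\log(\de Q^n/\de Q^{\pi^n})$: after mollification the ratio $q^\delta_t(v,v_*,w')/\big(f^\delta_t(v)f^\delta_t(v_*)\big)$ is still generically unbounded (the numerator need not decay relative to the denominator at large velocities), and the perturbed scattering rate $\tilde\lambda_t$ need not be finite, which is exactly what the well-posedness of the tilted equation requires. The missing ingredient is a second step truncating the flux: one must cut $q$ at level $\ell$ and restrict it to a compact set $\Omega_\ell$, and --- this is the delicate part --- simultaneously modify $f$ by an explicit, suitably renormalized correction term so that the truncated pair still satisfies the balance equation and $f^\ell_t\ge c_\ell f_t$ on $\{|v|\le\ell\}$. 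Only the combination convolution\,+\,truncation lands in the class $\tilde{\ms S}$ of paths with bounded perturbed rate, and one must then verify that the rate function converges along both approximations (which requires the decomposition $I=H(\pi_0|m)+Q(\Phi)-Q(\log\sigma)-Q(1)+Q^\pi(1)$ and dominated-convergence arguments term by term).

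Second, you misplace the role of the second-moment condition \eqref{QV} defining $\hat{\ms S}$. It is not needed for the Gronwall uniqueness of the tilted equation --- that follows from the boundedness of $\tilde\lambda_t$ alone, via an $L^1$ estimate. The condition \eqref{QV}, combined with the Gaussian-tail bound on $B$ which controls $\int Q\,|v'-v'_*|^2$ and the balance equation, is what yields $\int Q\,(|v|^2+|v_*|^2+|v'|^2+|v'_*|^2)<+\infty$; this integrability is what makes the terms $Q(\log\sigma)$ and the convolved entropy terms converge in the approximation step. In other words, $\hat{\ms S}$ enters in recovering $I(\pi^n,Q^n)\to I(\pi,Q)$, not in the well-posedness of the perturbed dynamics.
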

The proof of the upper bound does not rely on  item (iii) of Assumption \ref{ass:1}. In particular it holds also when the collision rate conserves the energy. Likewise, item (iii) in Assumption
\ref{ass:2} is used only in the proof of the lower bound.
We also remark that, if we replace item (iii) in Assumption \ref{ass:1} by the condition $\int \de w' B(v, v_*, w') e^{\eta [|v'|^2 + |v'_*|^2]}\leq C(1 + |v-v_*|^2)$, or the collision $r$ has non degenerate density on $(\bb R^d)^4$, the lower bound holds in the whole $\ms S$.
As we show in Proposition \ref{prop:I1}, the projection of $I$ on the empirical measure coincides with the variational expression in \cite{Re, Le}.

\section{Upper bound}
\label{s:3}
The upper bound is achieved by an established pattern in large deviation theory. We first prove the exponential tightness, which allows us to reduce to compacts. By an exponential tilting of the measure,
we prove an upper bounds for open balls and finally we use a mini-max argument to conclude.
\begin{proposition}[Exponential tightness]
  \label{Ptight}
  There exists a sequence of compacts $K_\ell\subset \ms S$ such that
  for any $N$
  $$\bb P^N_{\nu^N}
  \big( (\pi^N,Q^N)\notin  K_\ell \big) \le  e^{-N\ell}.
  $$
\end{proposition}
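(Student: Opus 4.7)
The plan rests on three ingredients: a characterization of compact subsets of $\ms S$, exponential martingale estimates for the Kac walk, and a stopping-time argument upgrading pathwise moment bounds into Skorokhod modulus estimates. For compactness, since $\ms P_0(\bb R^d)$ carries the $W_1$ topology, a uniform bound $\sup_t \pi_t(|v|^2)\le M$ yields tightness of the one-time marginals; together with a Skorokhod modulus estimate this gives precompactness in $D([0,T];\ms P_0(\bb R^d))$. For $Q$ with the weak-$*$ topology on $\ms M$, total mass plus tightness on $[0,T]\times(\bb R^d)^4$ suffice, so I require $Q(1+|v|^2+|v_*|^2+|v'|^2+|v'_*|^2)\le L$. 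I set $K_\ell$ as the intersection of these three conditions with suitably chosen $M_\ell$, $L_\ell$, $\omega_\ell$, noting that the balance equation \eqref{bal} is stable under the corresponding weak limits.

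The core tool is the mean-one supermartingale
\begin{equation*}
M^N_t(\phi) = \exp\bigg\{ N\pi^N_t(\phi) - N\pi^N_0(\phi) - \int_0^t \!\de s\, \Gamma^N_\phi(\bs v(s))\bigg\}
\end{equation*}
where
\begin{equation*}
\Gamma^N_\phi(\bs v) = \frac{1}{N}\sum_{\{i,j\}} \int r(v_i, v_j;\de v', \de v'_*)\big[ e^{\phi(v') + \phi(v'_*) - \phi(v_i) - \phi(v_j)} - 1\big].
\end{equation*}
Taking $\phi(v)=\alpha |v|^2$ with $\alpha<\eta$, momentum conservation gives $|v'|^2+|v'_*|^2=\tfrac12|v_i+v_j|^2+|w'|^2$, so Assumption \ref{ass:1}(iv) makes the $w'$-integral finite, and the remaining polynomial factor $(1+|v_i-v_j|^\gamma)$ with $\gamma<2$ is absorbed by the quadratic tilt $e^{\alpha(|v_i|^2+|v_j|^2)}$. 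Closing a Gr\"onwall-type loop using the initial exponential moment from Assumption \ref{ass:2}(i) and applying Doob's inequality yields $\bb P^N_{\nu^N}(\sup_t\pi^N_t(|v|^2) > M) \le e^{-N(\alpha M -C_T)}$. An analogous exponential martingale associated to $Q^N$, with test function of the form $\beta(1+|v|^2+|v_*|^2+|v'|^2+|v'_*|^2)$, produces the super-exponential control on the flow.

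The Skorokhod modulus of $\pi^N$ is handled via the balance equation \eqref{bal1}: for Lipschitz $\phi$ and $s<t$, $|\pi^N_t(\phi)-\pi^N_s(\phi)|$ is bounded by a constant multiple of $\int_{(s,t]}\! Q^N(\de u;\de v,\de v_*,\de v',\de v'_*)(|v-v'|+|v_*-v'_*|)$; short-interval exponential estimates for the right-hand side on a fine partition of $[0,T]$, combined with the moment control on $\pi^N$, provide the super-exponential modulus bound. Choosing $M_\ell, L_\ell, \omega_\ell$ so that each failure event has probability at most $\tfrac13 e^{-N\ell}$ delivers the compact sets $K_\ell$. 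I expect the main obstacle to be the exponential-moment step: because $B$ has only Gaussian moments in $w'$ the tilt must be restricted to $\alpha<\eta$, and the scattering-rate growth $(1+|v_i-v_j|^\gamma)$ must be absorbed cleanly into the quadratic exponential, which is precisely where the hypothesis $\gamma<2$ in Assumption \ref{ass:1}(iv) becomes essential.
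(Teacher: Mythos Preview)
Your plan coincides with the paper's: exponential supermartingales for $\sup_t\pi^N_t(|v|^2)$ (Lemma~\ref{lemma1}), for the total flow (Lemma~\ref{lemma2}), and a short-interval flow estimate yielding the modulus of continuity via the balance equation (Lemma~\ref{lemma3}).

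One step in your outline would not work as written. For the flow you ask for control of $Q^N\big(1+|v|^2+\cdots+|v'_*|^2\big)$ and propose the supermartingale \eqref{mart1} with the unbounded test function $F=\beta(1+|v|^2+\cdots)$. The compensator then involves $\lambda^F(v,v_*)\asymp e^{c\beta(|v|^2+|v_*|^2)}$, and $\pi^N_t\otimes\pi^N_t(\lambda^F)$ is \emph{not} controlled by the event $\{\sup_t\pi^N_t(\zeta)\le h\}$: a single particle may carry energy of order $Nh$, so $\lambda^F$ can be of order $e^{c\beta Nh}$ and the compensator blows up super-exponentially. The paper sidesteps this entirely. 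Since $\ms M$ carries the weak* topology, Banach--Alaoglu already makes $\{Q:Q(1)\le L\}$ compact, so only the total mass of $Q^N$ is needed; the paper takes $F\equiv\gamma$ constant in \eqref{mart1} and, after intersecting with $\{\sup_t\pi^N_t(\zeta)\le h\}$ from Lemma~\ref{lemma1}, bounds the compensator via \eqref{(v)}. The modulus estimate is handled the same way: for \emph{bounded} $\phi$ the balance equation gives $|\pi^N_t(\phi)-\pi^N_s(\phi)|\le 4\|\phi\|_\infty\, Q^N_{[s,t]}(1)$, so Lemma~\ref{lemma3} again reduces to a mass bound on short intervals, not the first-moment integral you propose. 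Finally, the initial exponential moment is taken under $\nu^N=m^{\otimes N}(\,\cdot\mid\sum_i v_i=0)$, not under the product measure; the paper handles the conditioning via a local CLT computation (Lemma~\ref{lemma0}), which your outline omits.
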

By standard compactness criteria (Banach-Alaoglu, Prokhorov and Ascoli-Arzel\`a
theorems),
the proof follows from the bounds in the next
three lemmata. 
\begin{lemma}\label{lemma1}
  Let $\zeta\colon \bb R^d \to [0,\, +\infty)$ be the function $\zeta(v) = |v|^2$. Then
    \begin{equation}
      \lim_{\ell\to+\infty}\varlimsup_{N\to+\infty}\frac 1 N \log{ \bb P_{\nu^N}^N \Big ( \sup_{t\in [0,T]} \pi^N_t(\zeta)}
     \geq  \ell \Big)
      =-\infty.
      \end{equation}
\end{lemma}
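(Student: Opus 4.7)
My approach is the exponential (Cram\'er--Chernoff) martingale method applied to the observable $F(\bs v) := \alpha \sum_{i=1}^N |v_i|^2$ for a small $\alpha>0$. First I would fix $\alpha \in (0, \eta \wedge \gamma)$, with $\eta$ from Assumption~\ref{ass:1}(iv) and $\gamma$ from Assumption~\ref{ass:2}(i), and compute the Dynkin corrector $V(\bs v):=e^{-F(\bs v)}\ms L_N e^{F(\bs v)}$. Using the parallelogram identity together with momentum conservation, one has $|v'|^2+|v_*'|^2 - |v_i|^2 - |v_j|^2 = |w'|^2 - \frac 12 |v_i-v_j|^2$ with $w'=(v'-v_*')/\sqrt 2$, so that $V(\bs v)$ equals $\frac 1N \sum_{\{i,j\}}\int B(v_i,v_j,w')\bigl[e^{\alpha(|w'|^2-|v_i-v_j|^2/2)}-1\bigr]\de w'$. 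The Gaussian tail in Assumption~\ref{ass:1}(iv) makes $\int B(v_i,v_j,w')\,e^{\alpha|w'|^2}\de w'$ bounded by $C(1+|v_i-v_j|^\gamma)$, while the damping factor $e^{-\alpha|v_i-v_j|^2/2}$ produced by momentum conservation absorbs this polynomial growth because $\gamma<2$. Summing over the $\binom N2$ pairs then yields the crucial pointwise estimate $V(\bs v)\le C_\alpha N$, \emph{uniformly in} $\bs v$.

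Next, a standard localization shows that $M_t := \exp\bigl(F(\bs v(t))-F(\bs v(0))-\int_0^t V(\bs v(s))\de s\bigr)$ is a positive supermartingale with $\bb E[M_0]=1$. From the uniform bound above and Doob's maximal inequality I would deduce, conditionally on a starting configuration $\bs v_0\in \Sigma_{N,0}$, that $\bb P^N_{\bs v_0}\bigl(\sup_{t\le T} F(\bs v(t))\ge F(\bs v_0) + C_\alpha NT + Nc\bigr)\le e^{-Nc}$ for every $c>0$.

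It then remains to integrate over the initial law $\nu^N$. Assumptions~\ref{ass:2}(i)--(ii) should yield, via a local central limit theorem argument handling the conditioning on $\sum_i v_i=0$ (the conditioning contributing only a polynomial correction), a bound of the form $\bb E_{\nu^N}[e^{F}]\le K K_0^{\,N}$ with $K_0 := \int m(v)e^{\alpha|v|^2}\de v<\infty$. A Chebyshev bound then gives $\bb P_{\nu^N}(\pi_0^N(\zeta)\ge \ell/2)\le K\exp\{N(\log K_0-\alpha\ell/2)\}$, and splitting the event $\{\sup_t \pi_t^N(\zeta)\ge\ell\}$ according to whether $\pi_0^N(\zeta)$ exceeds $\ell/2$ (and choosing $c = \alpha\ell/4$ in the conditional bound) produces an overall upper bound on $\tfrac 1N\log \bb P_{\nu^N}(\cdot)$ of the form $-\alpha \ell /4 + \mathrm{const}$, whose limit as $\ell\to\infty$ is $-\infty$, as required.

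The main obstacle I foresee is the uniform pointwise estimate $V(\bs v)\le C_\alpha N$. Its validity relies in an essential way on the combined effect of the Gaussian integrability of the post-collision relative velocity in Assumption~\ref{ass:1}(iv), the damping $e^{-\alpha|v_i-v_j|^2/2}$ provided by momentum conservation, and the sub-quadratic growth $\gamma<2$; dropping any one of these three ingredients would allow explosions in $V$ that could make the kinetic energy per particle grow uncontrollably on $[0,T]$ and prevent the exponential martingale from delivering super-exponential tails.
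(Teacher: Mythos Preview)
Your proposal is correct and follows essentially the same route as the paper: both apply the exponential supermartingale built from $\alpha\sum_i|v_i|^2$, use Assumption~\ref{ass:1}(iv) together with the parallelogram identity and momentum conservation to obtain the uniform bound $e^{-F}\ms L_N e^{F}\le C_\alpha N$, and then control the initial exponential moment under $\nu^N$ via the local CLT (the paper's Lemma~\ref{lemma0}). The only cosmetic difference is that the paper uses optional stopping at $\tau_\ell=\inf\{t:\pi^N_t(\zeta)>\ell\}\wedge T$ and evaluates $\bb E[\bb M^\Psi_{\tau_\ell}]\le 1$ directly, which handles the initial datum and the dynamics in one stroke, whereas you invoke Doob's maximal inequality and then split according to whether $\pi^N_0(\zeta)\ge\ell/2$; both variants are standard and yield the same conclusion.
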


\begin{lemma}\label{lemma2}
    \begin{equation}
      \lim_{\ell\to+\infty}\varlimsup_{N\to+\infty}\frac 1 N \log{ \bb P_{\nu^N}^N \Big ( Q^N(1) >  \ell \Big)}
      =-\infty.
      \end{equation}
\end{lemma}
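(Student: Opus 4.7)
The first observation is that $NQ^N(1)$ coincides with the total number of collisions performed by the Kac walk in $[0,T]$, a pure-jump counting process $K^N_t$ with intensity $\Lambda^N_t:=\frac 1N\sum_{\{i,j\}}\lambda(v_i(t),v_j(t))$. I would first translate the bound \eqref{(v)} into one involving the empirical second moment. Since $\gamma\in[0,2)$, one has $|v-v_*|^\gamma\leq 1+|v-v_*|^2\leq 1+2|v|^2+2|v_*|^2$, so Assumption \ref{ass:1}(iv) gives $\lambda(v,v_*)\leq C(1+|v|^2+|v_*|^2)$; recalling $\pi^N_t(\zeta)=\frac 1N\sum_i|v_i(t)|^2$, an elementary computation then yields
\begin{equation*}
\Lambda^N_t\,\le\,CN\bigl(1+\pi^N_t(\zeta)\bigr).
\end{equation*}
Hence the cumulative intensity is controlled by $\sup_{t\in[0,T]}\pi^N_t(\zeta)$, which is precisely what Lemma \ref{lemma1} handles super-exponentially.

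The natural strategy is an exponential Chebyshev inequality combined with a stopping time. Fix $\ell_1>0$ and set $\tau_{\ell_1}:=\inf\{t\in[0,T]:\pi^N_t(\zeta)>\ell_1\}\wedge T$, so that $\Lambda^N_t\leq CN(1+\ell_1)$ on $[0,\tau_{\ell_1}]$. The Dol\'eans-Dade exponential of the counting process $K^N$,
\begin{equation*}
M^\alpha_t\,:=\,\exp\Bigl\{\alpha\, K^N_{t\wedge\tau_{\ell_1}}\,-\,(e^\alpha-1)\int_0^{t\wedge\tau_{\ell_1}}\!\Lambda^N_s\,\de s\Bigr\},\qquad \alpha>0,
\end{equation*}
is a true (not merely local) martingale, because the compensator is bounded up to $\tau_{\ell_1}$. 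Choosing $\alpha=1$ and applying Markov's inequality yields
\begin{equation*}
\bb P^N_{\nu^N}\bigl(K^N_{T\wedge\tau_{\ell_1}}>N\ell\bigr)\,\le\,\exp\bigl\{-N\bigl[\ell-C(1+\ell_1)T(e-1)\bigr]\bigr\}.
\end{equation*}

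To conclude, I would decompose $\{Q^N(1)>\ell\}\subset\{K^N_{T\wedge\tau_{\ell_1}}>N\ell\}\cup\{\tau_{\ell_1}<T\}$. Given any $M>0$, Lemma \ref{lemma1} allows to choose $\ell_1$ so large that $\varlimsup_N(1/N)\log\bb P^N_{\nu^N}(\tau_{\ell_1}<T)\leq -M$; with this $\ell_1$ fixed, any $\ell>M+C(1+\ell_1)T(e-1)$ gives $\varlimsup_N(1/N)\log\bb P^N_{\nu^N}(Q^N(1)>\ell)\leq -M$, which proves the statement since $M$ is arbitrary. The only delicate point is the justification of the exponential martingale property: one has to localize so that the compensator is bounded on the relevant interval, but once the stopping at $\tau_{\ell_1}$ is introduced this is entirely standard for counting processes. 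No other serious obstacle is expected.
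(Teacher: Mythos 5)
Your proposal is correct and follows essentially the same route as the paper: reduce to the event where $\sup_{t}\pi^N_t(\zeta)$ is bounded via Lemma \ref{lemma1}, then apply an exponential Chebyshev bound to the collision counter using the exponential (super-)martingale \eqref{mart1} with constant $F$, whose compensator is controlled through \eqref{(v)}. The only cosmetic differences are that you localize with a stopping time rather than inserting the indicator of the good event, and you insist on a true martingale where a supermartingale bound $\bb E[\bb N^F_T]\le 1$ already suffices.
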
  


\begin{lemma}\label{lemma3}
  For each $\epsilon >0$ and  $\phi\colon\bb R^d\to \bb R$ continuous and bounded
  \begin{equation}
    \lim_{\delta\downarrow 0}
    \varlimsup_{N\to+\infty}\frac 1 N \log{ \bb P^N_{\nu^N}\Big(\sup_{t, s \,\in [0,T]\;:|t-s|<\delta} |\pi^N_t(\phi)-\pi^N_s(\phi)|> \epsilon  \Big)}
      =-\infty.
  \end{equation}
\end{lemma}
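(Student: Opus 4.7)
I will use the standard exponential martingale (Dynkin--Feynman--Kac) method for jump processes, combined with the a priori energy bound from Lemma~\ref{lemma1}. For $a\in\bb R$, since each collision alters exactly two velocities, the process
$$M^a_t := \exp\Bigl\{aN\pi^N_t(\phi) - aN\pi^N_0(\phi) - \int_0^t \Gamma_a(\bs v(s))\,\de s\Bigr\}$$
is a mean-one $\bb P^N_{\nu^N}$-martingale, where
$$\Gamma_a(\bs v) = \frac{1}{N}\sum_{i<j}\int r(v_i,v_j;\de v',\de v'_*)\Bigl[e^{a(\phi(v')+\phi(v'_*)-\phi(v_i)-\phi(v_j))}-1\Bigr].$$
Using $|e^x-1|\le|x|e^{|x|}$, the boundedness of $\phi$, the scattering bound $\lambda(v,v_*)\le C(1+|v-v_*|^\gamma)$ from \eqref{(v)}, and $|v-v_*|^\gamma\le c(1+|v|^2+|v_*|^2)$ which is valid because $\gamma<2$, one obtains the pointwise estimate $|\Gamma_a(\bs v)|\le c(a)\,N\bigl(1+\pi^N(\bs v)(\zeta)\bigr)$, where $\zeta(v)=|v|^2$ and $c(a)$ depends explicitly on $a$, $\|\phi\|_\infty$ and the constants in Assumption~\ref{ass:1}.

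Next I discretize in time. For $\delta>0$ set $t_k=k\delta$, $k=0,\dots,\lceil T/\delta\rceil$. A short case analysis shows that if $\sup_{|t-s|<\delta}|\pi^N_t(\phi)-\pi^N_s(\phi)|>\epsilon$ then necessarily
$$\max_k\sup_{t\in[t_k,t_{k+1}]}\bigl|\pi^N_t(\phi)-\pi^N_{t_k}(\phi)\bigr|>\tfrac{\epsilon}{4}.$$
On the energy event $E_\ell:=\{\sup_t\pi^N_t(\zeta)\le\ell\}$ one has $\int_{t_k}^{t}|\Gamma_a|\,\de s\le c(a)\delta N(1+\ell)$; using the identity $\log(M^a_t/M^a_{t_k})=aN[\pi^N_t(\phi)-\pi^N_{t_k}(\phi)]-\int_{t_k}^{t}\Gamma_a\,\de s$ together with Doob's maximal inequality for the positive martingale $t\mapsto M^a_t/M^a_{t_k}$, I obtain
$$\bb P^N_{\nu^N}\Bigl(\sup_{t\in[t_k,t_{k+1}]}\!\bigl(\pi^N_t(\phi)-\pi^N_{t_k}(\phi)\bigr)>\tfrac{\epsilon}{4},\,E_\ell\Bigr)\le\exp\bigl(-N\bigl[\tfrac{a\epsilon}{4}-c(a)\delta(1+\ell)\bigr]\bigr).$$
Applying the same bound with $-a$ to control the negative increments and taking a union bound over the $\lceil T/\delta\rceil$ grid intervals yields
$$\bb P^N_{\nu^N}\Bigl(\sup_{|t-s|<\delta}|\pi^N_t(\phi)-\pi^N_s(\phi)|>\epsilon\Bigr)\le \bb P^N_{\nu^N}(E_\ell^c)+\frac{2T}{\delta}\exp\bigl(-N\bigl[\tfrac{a\epsilon}{4}-c(a)\delta(1+\ell)\bigr]\bigr).$$

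The conclusion is reached by sending the parameters in the correct order. For fixed $\ell$ and $a$, passing to $\varlimsup_N\frac1N\log$ and then $\delta\downarrow 0$ makes the second term contribute at most $-a\epsilon/4$; afterwards $\ell\to\infty$ kills the first term by Lemma~\ref{lemma1}; finally $a\to\infty$ yields the limit $-\infty$, as required.

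\textbf{Main obstacle.} The sole technical point is the unboundedness of the scattering rate $\lambda$: without an a priori bound on $\pi^N(\zeta)$ the tilted generator $\Gamma_a$ would fail to be uniformly controlled and Doob's inequality would give no quantitative estimate. The remedy is precisely the restriction to the energy event $E_\ell$ furnished by Lemma~\ref{lemma1}; here the assumption $\gamma<2$ in Assumption~\ref{ass:1}(iv) is crucial so that $|v-v_*|^\gamma$ be dominated by the quadratic energy.
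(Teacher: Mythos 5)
Your proof is correct, but it follows a different route from the paper's. The paper does not tilt the empirical measure at all: it observes that, by the balance equation \eqref{bal1} with a time-independent $\phi$, the oscillation $|\pi^N_t(\phi)-\pi^N_s(\phi)|$ is bounded by $4\|\phi\|_\infty\, Q^N_{[s,t]}(1)$, so the equicontinuity estimate reduces to controlling the number of collisions in a window of length $\delta$. This is done with the flow supermartingale \eqref{mart1} with $F=\gamma\,\id_{[t,t+\delta]}$ (the same object used for Lemma \ref{lemma2}), restricted to the energy event, and the quantitative decay in $\delta$ comes from the choice $\gamma=\log(1/\delta)$. You instead use the configuration-space exponential supermartingale associated with $\Psi=aN\pi^N(\phi)$, a dyadic-type grid, and Doob's maximal inequality on each grid interval; the decay in $\delta$ comes from the factor $c(a)\delta(1+\ell)$ in the compensator rather than from tuning the tilting parameter. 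Both arguments hinge on the same two ingredients — restriction to $\{\sup_t\pi^N_t(\zeta)\le\ell\}$ via Lemma \ref{lemma1} and the growth bound \eqref{(v)} with $\gamma<2$ — so neither buys extra generality; the paper's version is slightly more economical because it recycles the flow martingale and avoids the maximal inequality, while yours is self-contained at the level of the empirical measure and does not invoke the balance equation. One small correction: for unbounded rates $M^a_t$ should be asserted as a positive supermartingale (as the paper does for $\bb M^\Psi$, citing \cite[App.~1, Prop.~7.3]{KL}) rather than a mean-one martingale; this is harmless since Doob's maximal inequality for nonnegative supermartingales is all your argument uses.
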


Lemmata \ref{lemma1} and  \ref{lemma3} imply that if the large deviation upper bound rate function is finite then
$\sup_{t\in[0,T]}\pi_t(\zeta)<+\infty$ and  $\pi \in C([0,T], \ms P_0(\bb R^d))$, i.e. $\pi$ meets the conditions in items (i), (ii) in Definition \ref{def:sac}.

To deal with the initial conditions, as detailed in
Assumption~\ref{ass:2}, we need the following elementary statement whose proof is omitted.
\begin{lemma}\label{lemma0}
Pick $\phi\in C(\bb R^d)$ such that $m(e^{\phi})<+\infty$, and let
$m_\phi$ be the probability on $\bb R^d$ defined by
$m_\phi(\de v) = m(\de v) e^{\phi(v)} /m(e^{\phi})$. Then
\begin{equation}\label{mphi}
\nu^N\left( e^{N \pi^N(\phi)} \right) =
\big(m(e^{\phi})\big)^N \, \frac {f_N^\phi(0)}{f_N(0)}
\end{equation}
where $f_N^\phi$ and  $f_N$ are the densities of
$N^{-1/2}\sum_i v_i$ with $v_i$ i.i.d.
with law $m_\phi$ and $m$ respectively.
\end{lemma}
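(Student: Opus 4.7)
The plan is to unfold the conditional measure $\nu^N = \mu^N(\,\cdot\, \mid \sum_i v_i = 0)$ via disintegration against the density of $S_N := \sum_i v_i$, and then to exploit the factorized structure of $\mu^N = m^{\otimes N}$ together with the tilting identity $e^{\phi(v)} m(\de v) = m(e^\phi)\, m_\phi(\de v)$.

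Concretely, let $g_N$ and $g_N^\phi$ denote the densities on $\bb R^d$ of $S_N$ under $\mu^N = m^{\otimes N}$ and under $m_\phi^{\otimes N}$, respectively. Since $e^{N\pi^N(\phi)}(\bs v) = \prod_{i=1}^N e^{\phi(v_i)}$, regular conditional probability gives
\begin{equation*}
\nu^N\big( e^{N\pi^N(\phi)} \big)
= \lim_{\epsilon \downarrow 0}
\frac{\mu^N\!\big(e^{N\pi^N(\phi)}\, \mathbf{1}_{\{|S_N|<\epsilon\}}\big)}{\mu^N\!\big(|S_N|<\epsilon\big)}.
\end{equation*}
Applying the tilting identity in the numerator extracts a prefactor $(m(e^\phi))^N$ and replaces $m^{\otimes N}$ by $m_\phi^{\otimes N}$, so that the ratio equals $(m(e^\phi))^N \cdot m_\phi^{\otimes N}(|S_N|<\epsilon)/\mu^N(|S_N|<\epsilon)$. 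Passing to the limit and using that $g_N$ and $g_N^\phi$ are continuous at $0$ yields
\begin{equation*}
\nu^N\big(e^{N\pi^N(\phi)}\big) = \big(m(e^\phi)\big)^N \,\frac{g_N^\phi(0)}{g_N(0)}.
\end{equation*}

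The final step is to rescale: since $f_N(v) = N^{d/2} g_N(\sqrt N v)$ and analogously for $f_N^\phi$, the ratio $g_N^\phi(0)/g_N(0)$ coincides with $f_N^\phi(0)/f_N(0)$, which is \eqref{mphi}. The only technical point is continuity of $g_N$ and $g_N^\phi$ at $0$: by Assumption \ref{ass:2}(ii) and Fourier inversion, $m$ is continuous and bounded, and $m_\phi$ inherits the same property; hence the $N$-fold convolutions $m^{*N}$ and $m_\phi^{*N}$ are continuous, since convolution of two $L^1\cap L^\infty$ functions is continuous. I anticipate no further obstacle, which is consistent with the proof being omitted in the paper.
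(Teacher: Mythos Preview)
Your proof is correct and supplies exactly the elementary computation the paper omits: rewrite $e^{N\pi^N(\phi)}\,\mu^N$ as $(m(e^\phi))^N\, m_\phi^{\otimes N}$, disintegrate both product measures along $\bs v\mapsto S_N$, and read off the ratio of the densities at the origin (the rescaling $f_N(0)=N^{d/2}g_N(0)$ then cancels). Since the paper explicitly omits the proof as elementary, there is nothing further to compare; the only point that might deserve one more line is the continuity of $g_N^\phi$ at $0$ when $\phi$ is unbounded (as in the application $\phi(v)=\gamma|v|^2+\alpha\cdot v$), but this follows from Assumption~\ref{ass:2}(i)--(ii) after noting that $m(v)e^{\gamma|v|^2}$ is bounded by Fourier inversion, so $m_\phi\in L^2$ and hence $m_\phi^{*N}$ is continuous for $N\ge 2$.
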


\begin{proof}[Proof of Lemma \ref{lemma1}]
  Given $\gamma>0$ to be chosen later,  let $\Psi(\bs v)=\gamma\sum_{k} |v_k|^2$ and set
  \begin{equation*}
\bb M^\Psi_t := \exp \Big\{ \Psi(\bs v_t) - \Psi(\bs v_0)-\int_0 ^t \de s\,e^{-\Psi} \ms L_N e^\Psi (\bs v_s)\Big\}.
  \end{equation*}  
By e.g. \cite[App.~1, Prop.~7.3]{KL},  $\bb M^\Psi$ is a positive super martingale, in particular 
for any bounded stopping time $\tau$ and any $\bs v_0\in\Sigma_{N,0}$
$\bb E_{\bs v_0}^N \big[\bb M^\Psi_\tau  \big]\leq 1$.
By simple computations, in view of Assumption \ref{ass:1}, item (iv), we can choose $\gamma>0$ such that  there exists a constant  $c$ such that for any $N > 1$
\begin{equation*}
\sup_{\bs v \in \Sigma_{N,0}} \,e^{-\Psi} \ms L_N e^\Psi (\bs v) \leq c N.
\end{equation*}  

Set $\tau_\ell :=\inf \{t>0 : \pi_t^N(\zeta)> \ell  \}\wedge T$, then
\begin{equation*}\begin{split}
&  \bb P_{\nu^N}^N \Big ( \sup_{t\in [0,T]} \pi^N_t(\zeta) \geq  \ell \Big)
  = \bb P_{\nu^N}^N \big (\tau_\ell < T\big)
  =\bb E _{\nu^N}^N \Big(\bb M^\Psi_{\tau_\ell} \big(\bb M^\Psi_{\tau_\ell}\big)^{-1} \id_{\tau_\ell < T} \Big)\\
  &
  \leq \bb E _{\nu^N}^N \Big(\bb M^\Psi_{\tau_\ell} \exp\big\{-\gamma N \ell + \Psi(\bs v_0) + cN \tau_\ell   \big\} \Big)\\
  &\leq \exp\{-N(\gamma \ell -c T ) \} \int d\nu^N \exp \{\gamma \sum_k |v_k|^2   \}.
\end{split}\end{equation*}

To complete the proof we show that Assumption \ref{ass:2} implies that,  possibly by redefining $\gamma >0$, there exists a constant $c$ such that for any $N$
\begin{equation}\label{cond1}
\int \de \nu^N e^{\gamma \sum_k |v_k|^2  }\leq e^{c N}.
\end{equation}  
In order to prove this bound, we apply Lemma \ref{lemma0} with
$\phi(v)=\gamma |v|^2 +\alpha\cdot v$, where $\alpha\in\bb R^d$ is
chosen so that $m_\phi$ is centered.
Observing that with $\nu^N$-probability one
$\pi^N(\gamma |v|^2 +\alpha\cdot v)=\gamma \pi^N(|v|^2)$, we get
\begin{equation*}
  \int \de \nu^N e^{\gamma \sum_k |v_k|^2  } 
  = \big(m(e^{\phi})\big)^N \, \frac {f_N^\phi(0)}{f_N(0)}
\end{equation*}
By Assumption \ref{ass:2}, the densities $f_N^\phi$ and $f_N$ satisfy
the local central limit theorem, see e.g. \cite[Ch.~XV.5, Thm.~2]{Fe}). In particular $\frac 1 N \log f^\phi_N(0)$ and
$\frac 1 N \log f_N(0)$ vanish as $N\to +\infty$.
The proof or \eqref{cond1} is thus achieved.
\end{proof}


\begin{proof}[Proof of Lemma \ref{lemma2}]
 Given $\ell$, $h>0$, set  $B_{\ell,h}:=\{(\pi,Q):\, \sup_{t}\pi_t(\zeta)\leq h, Q(1)>\ell \}$,
 with  $\zeta(v)= |v|^2$.
 In view of the previous lemma, it is enough
 to show that for each $h>0$ 
 \begin{equation*}
\lim_{\ell\to\infty} \varlimsup_{N\to\infty}\frac 1 N \log  \bb P_{\nu^N}^N \Big (\big(\pi^N, Q^N\big)\in B_{\ell, h} \Big)=-\infty.
 \end{equation*}
 Recall that the scattering rate $\lambda$ has been defined in 
  \eqref{sr}.
  Given a bounded measurable function
  $F\colon [0,T] \times \bb R^{4d}\to \bb R$ such that $F(t; v,v_*, v',v_*')=F(t; v_*,v, v',v_*') =F(t; v,v_*, v'_*,v')$,
  let
    \begin{equation}
      \label{def:lambdaF}
      \lambda^F(t;v,v_*)=\int r(v,v_*; \de v',\de v'_*)e^{F(t;v,v_*;v',v'_*)}.
    \end{equation}
    Denoting by $Q^N_{[0,t]}$ the restriction of the measure $Q^N$ on
    $[0,t]$, and setting $\Lambda(v)=\lambda(v,v)$, $\Lambda^F(t,v)=\lambda^F(t;v,v)$,
    $v\in\bb R^d$,
    the
    process
  \begin{equation}\label{mart1}\begin{split}
      \bb N_t^F = & \exp\Big\{N\Big( Q_{[0,t]}^N(F)-\frac 1 2 \int_0^t \!\de s\,
      \Big[
    \pi_s^N\otimes \pi^N_s\big(\lambda^F -\lambda  \big)
    +\frac 1 N 
    \pi^N_s\big(\Lambda^F-\Lambda\big)\Big] \Big)  \Big\}
  \end{split}\end{equation}
  is a $\bb P^N_{\bs v}$ positive super-martingale for each $\bs v\in \Sigma_{N,0}$, see e.g. \cite[App.~1, Prop.~2.6]{KL}.
  Choosing $F=\gamma$, with $\gamma$ a positive constant, for each $\ell >0$ 
  \begin{equation*}\begin{split}
      \bb P_{\nu^N}^N \Big (\big(\pi^N, Q^N\big)\in B_{\ell, h} \Big)  &
      =
      \bb E_{\nu^N}^N \Big(\bb N_T^\gamma \,\big(\bb N_T^\gamma\big)^{-1} \id_{B_{\ell, h}}(\pi^N, \,Q^N)  \Big) \\ &
      \leq \exp\{-\gamma N \ell + (e^\gamma -1)N C(1+2h) T\},
  \end{split}\end{equation*}  
  where in last inequality we have used \eqref{(v)}.
\end{proof}

\begin{proof}[Proof of Lemma \ref{lemma3}]
  In view of the balance equation \eqref{bal} and Lemma \ref{lemma1},
  it is enough to show that for any $h>0$
  there exists a function $c\colon (0,1)\to \bb R_+$ with $c(\delta)\uparrow +\infty$ as $\delta\downarrow 0$ such that, for any $\epsilon >0$
  \begin{equation*}
    \bb P^N_{\nu^N}\Big(\sup_{t\in [0,T-\delta]} Q^N_{[t, t+\delta]}(1)> \epsilon ,\, \sup_{t\in[0,T]} \pi^N_t(\zeta)\leq h
    \Big)\leq e^{-Nc(\delta)}.
  \end{equation*}
  with $\zeta(v)=|v|^2$.
  By a straightforward inclusion of events, the previous bound follows from
  \begin{equation*}
    \frac 1 \delta \sup_{t\in[0,T-\delta]} \bb P^N_{\nu^N}\Big(Q^N_{[t, t+\delta]}(1)> \epsilon, \, \sup_{t\in[0,T]} \pi^N_t(\zeta)\leq h
    \Big)\leq e^{-Nc(\delta)}.\end{equation*}  
 Consider the super-martingale  \eqref{mart1} with $F=\gamma\,\id_{[t, t+\delta]}$, $\gamma >0$. Using the same argument of the previous lemma and \eqref{(v)} we deduce
\begin{equation*}
  \bb P^N_{\nu^N}\Big(Q^N_{[t, t+\delta]}(1)> \epsilon, \, \sup_{t\in[0,T]} \pi^N_t(\zeta)\leq h
  \Big)\leq
  \exp\Big\{-N\big[\gamma \epsilon - \delta\,(e^\gamma -1) C(1+2h)  \big]  \Big\}.
\end{equation*}   
  The proof is concluded by choosing $\gamma= \log(1/\delta)$.
\end{proof}  

\subsection*{Upper bound on compacts}
Given a  bounded continuous function $\phi$ on $\R^d$
we define the  probability measure $m_\phi$
on $\R^d$ by $m_\phi(\de v) = m(\de v) e^{\phi}/m(e^\phi)$ and
we set $\mu_\phi^N=m_\phi^{\otimes N}$.
Moreover,
recall the definition  \eqref{def:lambdaF}  of $\lambda^F$.

\begin{lemma}
  \label{lemma:up-pf}
  For any $(\phi,F)\in C_b(\R^d)\times C_b(\R^+\times {(\R^{d})}^2\times {(\R^{d})}^2)$
  such that  $m_\phi$ is centered and $F(t;v,v_*,v',v'_*)=F(t;v_*,v,v',v'_*)=F(t;v,v_*,v'_*,v')$, 
  and any measurable subset $B\subset \ms S$
  \begin{equation}
    \label{eq:up-pf}
    \varlimsup_{N\to \infty} \frac 1N \log\bb P^N_{\nu^N}
    \Big(
    (\pi^N,Q^N)\in B \Big)
    \le - \inf_{(\pi,Q)\in B} I_{\phi,F}(\pi,Q),
  \end{equation}
  where
  \begin{equation}
    \label{eq:Ipf}
    I_{\phi,F}(\pi,Q):= \pi_0(\phi) - \log \big( m(e^\phi) \big)+
    Q(F)-\frac 1 2 \int_0^T \de t \, \pi_t\otimes \pi_t(\lambda^F-\lambda).
    \end{equation}
  \end{lemma}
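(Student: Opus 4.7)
The plan is to combine two exponential tiltings---an initial-time one driven by $\phi$ and a Girsanov-type dynamical one driven by $F$---and then apply Chebyshev's inequality to read off \eqref{eq:up-pf}. To this end I set
\begin{equation*}
  M_0^\phi := \exp\bigl\{ N\bigl[\pi_0^N(\phi) - \log m(e^\phi)\bigr]\bigr\}
\end{equation*}
and pair it with the positive super-martingale $\bb N^F$ from \eqref{mart1}; the candidate tilt is the product $M_0^\phi\, \bb N^F_T$.

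First I would show that $\bb E^N_{\nu^N}\!\left[M_0^\phi\, \bb N^F_T\right] = e^{o(N)}$. Since $M_0^\phi$ is $\mc F_0$-measurable and $\bb E^N_{\bs v_0}[\bb N^F_T]\le 1$ for every $\bs v_0\in\Sigma_{N,0}$, conditioning on $\mc F_0$ reduces this to showing $\bb E^N_{\nu^N}[M_0^\phi] = e^{o(N)}$. By Lemma \ref{lemma0} the latter equals $f^\phi_N(0)/f_N(0)$; since $m_\phi$ is centered by hypothesis and inherits items (i)--(ii) of Assumption \ref{ass:2} from $m$ ($\phi$ being bounded), the local central limit theorem applies to both $m$ and $m_\phi$ and gives $N^{-1}\log f_N(0),\, N^{-1}\log f^\phi_N(0)\to 0$.

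Next I would identify $M_0^\phi\, \bb N^F_T$ with $\exp\{N\, I_{\phi,F}(\pi^N, Q^N)\}$ up to a harmless error. Unpacking the definitions,
\begin{equation*}
  \frac 1 N \log\bigl( M_0^\phi\, \bb N^F_T \bigr) = I_{\phi,F}(\pi^N, Q^N) - \frac 1{2N}\int_0^T\! \de s\, \pi^N_s\bigl(\Lambda^F-\Lambda\bigr).
\end{equation*}
The correction is uniformly $O(1/N)$: evaluating \eqref{(v)} at $v_*=v$ gives $\Lambda(v) = \lambda(v,v)\le C$, and then $\Lambda^F\le e^{\|F\|_\infty}\Lambda$ is also uniformly bounded. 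Consequently, on $\{(\pi^N,Q^N)\in B\}$ one has $\exp\{N\, I_{\phi,F}(\pi^N,Q^N)\}\le e^{O(1)}\, M_0^\phi\, \bb N^F_T$, and Chebyshev's inequality yields
\begin{equation*}
  \bb P^N_{\nu^N}\bigl((\pi^N,Q^N)\in B\bigr) \le e^{-N\inf_B I_{\phi,F} + O(1)}\, \bb E^N_{\nu^N}\!\left[M_0^\phi\, \bb N^F_T\right];
\end{equation*}
combined with the first step, this gives \eqref{eq:up-pf} after passing to $\varlimsup \frac 1 N \log$. The only non-routine ingredient is the first step: it is precisely to extract the initial-condition cost $-\log m(e^\phi) + \pi_0(\phi)$ at sub-exponential error that the local CLT hypotheses of Assumption \ref{ass:2} are needed, the remainder being a standard combination of the exponential super-martingale \eqref{mart1} with the uniform boundedness of the diagonal scattering rates $\Lambda$ and $\Lambda^F$.
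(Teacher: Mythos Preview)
Your proposal is correct and follows essentially the same approach as the paper: both combine the exponential super-martingale $\bb N^F_T$ with the initial tilt $e^{N\pi^N_0(\phi)}$, invoke Lemma~\ref{lemma0} together with the local CLT to control the ratio $f_N^\phi(0)/f_N(0)$, and absorb the diagonal correction $\frac{1}{2N}\int_0^T\pi^N_s(\Lambda^F-\Lambda)\,\de s$ as an $O(1)$ term via \eqref{(v)}. The only cosmetic difference is that the paper phrases the argument as a change of measure to the tilted initial law $\tilde\nu^N=\mu_\phi^N(\,\cdot\mid\sum_i v_i=0)$ and then uses $\bb E^N_{\tilde\nu^N}(\bb N^F_T)\le 1$, whereas you condition on $\mc F_0$ and apply Chebyshev directly---these are the same computation.
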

  \begin{proof}
    Consider the perturbed initial distribution
    ${\tilde \nu}^N = \mu_\phi^N (\ \cdot\ \vert \sum_{i} v_i = 0)$.
    Recalling the definition of the super-martingale $\bb N^F_t$ in
    \eqref{mart1}, we write
    \begin{equation*}
\bb P_{\nu^N}
    \Big(
    (\pi^N,Q^N)\in B \Big)=\int \de {\tilde\nu}^N\,\frac{\de \nu^N}{ \de {\tilde\nu}^N} \, \bb E^N_{\bf v}\left(\bb N^F_T \big(\bb N^F_T   \big)^{-1} \id_{B}(\pi^N,Q^N)\right)
    \end{equation*}  
      Recalling that  $\Lambda(v):=\lambda(v,v)$, and using Lemma~\ref{lemma0}, we get 
  \begin{equation*}
  \begin{split}
    &\bb P_{\nu^N}
    \Big(
    (\pi^N,Q^N)\in B \Big)
    \\
    &\le  \sup_{(\pi,Q)\in B} e^{-N\pi_0(\phi)} 
    \frac {f_N^\phi(0)}{f_N(0)}\big( m(e^\phi) \big)^N
    e^{
      -N \big\{ Q(F)-\frac 1 2 \int_0^T\! \de t \,[\pi_t\otimes \pi_t(\lambda^F-\lambda)
        +\frac 1N
        \pi_t (\Lambda^F - \Lambda)]
        \big\}
    }
    \\
    &\phantom{\le}\times
   \bb E_{\nu^N}^N \left(
   \frac {  e^{N\pi^N_0(\phi)}}{ \big( m(e^\phi) \big)^N}
   \frac { f_N(0)}{f_N^\phi(0)}\ 
   \bb N_T^F  \id_{B}(\pi^N,Q^N)
   \right) \\
    &\le
     \sup_{(\pi,Q)\in B} e^{-N\pi_0(\phi)} 
    \frac {f_N^\phi(0)}{f_N(0)}\big( m(e^\phi) \big)^N
    e^{
      -N \big\{ Q(F)-\frac 1 2 \int_0^T\! \de t \,\pi_t\otimes \pi_t(\lambda^F-\lambda)
        +\frac 1N TC \big\}
    }\,
   \bb E_{{\tilde \nu}^N}^N ( \bb N_T^F)
  \end{split}
  \end{equation*}
  where in the last inequality we used  \eqref{(v)}.
  The statement is achieved by observing that
  $\bb E_{{\tilde \nu}^N}^N ( \bb N_T^F)\le 1$, and noting that
  by the local central limit theorem both 
  $\frac 1 N \log f^\phi_N(0)$ and
  $\frac 1 N \log f_N(0)$ vanish as $N\to +\infty$.
\end{proof}

Recall that $H(\cdot|m)$ denotes the relative entropy and let $J$ be the functional defined in \eqref{5}.
\begin{proposition}[Variational characterization of the rate functional]
  \label{p:vr} 
  For any pair
  $(\pi,Q)\in\ms S$ satisfying (i) and (ii) in Definition \ref{def:sac}
  \begin{equation}
    \label{vr}
    \begin{split}
      & H(\pi_0|m) =\sup_\phi \Big\{\pi_0(\phi) - \log\big(
      m(e^\phi)\big)
      \Big\},\\
      & J(\pi,Q) = \sup_F \Big\{Q(F)-\frac 1 2 \int_0^T \de t \, \pi_t\otimes
      \pi_t(\lambda^F-\lambda)\Big\}.
    \end{split}
\end{equation}
In the first formula the supremum is carried out over the continuous
and bounded $\phi\colon \bb R^d\to \bb R$ such that the probability
$m_\phi$ (as defined in Lemma~\ref{lemma0}) is centered.
In the second formula the supremum is carried out over all continuous
and bounded $F\colon [0,T]\times (\bb R^d)^2\times  (\bb R^d)^2 \to
\bb R$ such that $F(t;v,v_*,v',v'_*)=F(t;v_*,v,v',v'_*)=F(t;v,v_*,v'_*,v')$.
\end{proposition}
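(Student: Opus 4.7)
The plan is to treat the two identities separately, invoking in both cases convex duality.

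For the entropy formula, I would start from the classical Donsker--Varadhan identity $H(\pi_0|m) = \sup_\phi\{\pi_0(\phi) - \log m(e^\phi)\}$, in which the supremum ranges over all bounded continuous $\phi$ without additional constraint. It remains to show that restricting to $\phi$'s for which $m_\phi$ is centered does not lower the supremum. Given any $\phi\in C_b(\bb R^d)$, I would consider the family $\tilde\phi_\alpha(v)=\phi(v)+\alpha\cdot v$, $\alpha\in\bb R^d$. Since $\pi_0\in\ms P_0(\bb R^d)$ has zero mean, $\pi_0(\tilde\phi_\alpha)=\pi_0(\phi)$, while the convex function $g(\alpha)=\log m(e^{\tilde\phi_\alpha})$ is finite and coercive in $\alpha$ thanks to the Gaussian lower bound on $m$ in Assumption~\ref{ass:2}(iii), and its gradient equals the mean of $m_{\tilde\phi_\alpha}$. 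Selecting $\alpha^*$ where $g$ attains its minimum, $m_{\tilde\phi_{\alpha^*}}$ is centered and $\pi_0(\tilde\phi_{\alpha^*}) - \log m(e^{\tilde\phi_{\alpha^*}}) \geq \pi_0(\phi) - \log m(e^\phi)$, so the constrained and unconstrained suprema coincide.

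For the flow formula, the pointwise Fenchel duality $x\log x - x + 1 = \sup_{y\in\bb R}\{xy - (e^y-1)\}$ for $x\geq 0$, attained at $y=\log x$, is the engine. Assume first $(\pi,Q)\in\ms S_{\mathrm{ac}}$, set $\rho=\de Q/\de Q^\pi$ (chosen symmetric, which is possible since both $Q$ and $Q^\pi$ share the symmetries defining $\ms M$), and integrate the inequality $\rho F - (e^F-1)\leq \rho\log\rho - \rho + 1$ against $Q^\pi$ to get $Q(F) - \int (e^F-1)\,\de Q^\pi \leq J(\pi,Q)$ for every admissible $F$. The identity $\int(e^F-1)\,\de Q^\pi = \tfrac12\int_0^T\!\de t\,\pi_t\otimes\pi_t(\lambda^F-\lambda)$ is immediate from the definitions of $Q^\pi$ and $\lambda^F$, so the right-hand supremum is bounded above by $J(\pi,Q)$. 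Conversely, the formal optimizer $F=\log\rho$ is reached in the limit by the truncations $F_n = \log\big((\rho\vee\tfrac1n)\wedge n\big)$, bounded measurable and symmetric, with monotone/dominated convergence delivering the equality. Finally, when $Q \not\ll Q^\pi$, the Hahn decomposition gives a Borel set $A$ with $Q^\pi(A)=0$ and $Q(A)>0$; plugging $F=M\,\id_A$ (suitably approximated) makes the right-hand side diverge as $M\to\infty$, proving the claimed supremum is $+\infty$.

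The main obstacle is the approximation passing from bounded measurable $F$ (namely the truncated $\log\rho$, or the indicator $M\,\id_A$ in the singular case) to bounded continuous $F$ respecting the three symmetries $F(t;v,v_*,v',v_*')=F(t;v_*,v,v',v_*')=F(t;v,v_*,v_*',v')$. I would rely on inner regularity of $Q^\pi$ and $Q$ on the Polish space $[0,T]\times(\bb R^d)^4$ combined with Urysohn's lemma to produce continuous bounded approximants, followed by an explicit symmetrization (averaging over the four-element group generated by the exchanges $(v,v_*)$ and $(v',v_*')$) to preserve the constraints on $F$. Controlling simultaneously $Q(F)$ and $\int e^F\,\de Q^\pi$ along these approximants is where item (ii) of Definition~\ref{def:sac} (uniform second moment of $\pi_t$) and the pointwise bound on $B$ in Assumption~\ref{ass:1}(v) furnish the integrability needed to pass to the limit.
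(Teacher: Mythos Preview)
Your proposal is correct and follows the same convex-duality route as the paper, which in fact gives fewer details: for the entropy identity the paper simply invokes the classical variational formula and remarks that since $\pi_0$ is centered one may restrict to $\phi$ with $m_\phi$ centered, while for the $J$ identity it rewrites $\tfrac12\int_0^T\de t\,\pi_t\otimes\pi_t(\lambda^F-\lambda)=Q^\pi(e^F-1)$ and then defers the computation to \cite[Lemma~4.4]{BB}. Your explicit use of the pointwise Legendre transform, the truncations $F_n=\log\big((\rho\vee\tfrac1n)\wedge n\big)$, the treatment of the singular case via an indicator, and the Lusin/Urysohn plus symmetrization step all make the cited lemma concrete.

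Two small points are worth flagging. First, the shifted function $\tilde\phi_\alpha(v)=\phi(v)+\alpha\cdot v$ is \emph{not} bounded, so it does not belong to the class over which the supremum is taken; you therefore need one more approximation step (e.g.\ truncate the linear part and re-adjust by an implicit-function argument to restore the centering of $m_\phi$). The paper's one-line proof is equally silent on this point, so you are not missing anything the authors provide. Second, you invoke Assumption~\ref{ass:2}(iii) to obtain coercivity of $\alpha\mapsto\log m(e^{\phi+\alpha\cdot v})$; note that the paper explicitly states that item~(iii) is used only in the proof of the lower bound, so the entropy identity should (and can) be established without it---your argument works, but uses slightly more than necessary. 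Likewise, the pointwise bound in Assumption~\ref{ass:1}(v) is not needed in the approximation step for $J$: once $F_n$ is bounded, $e^{F_n}\le n$ and dominated convergence with respect to the finite measures $Q$ and $Q^\pi$ suffices.
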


Since the set of $\pi$ satisfying the condition in Definition \ref{def:sac}, items (i), (ii), is a closed subset of $D([0,T]; \ms P_0(\bb R^d))$,
the previous characterization of the rate functional
readily implies the lower semicontinuity of $I$; moreover, if
$\sup_F\{ Q(F)-\frac 1 2 \int_0^T \de t \, \pi_t\otimes \pi_t(\lambda^F-\lambda) \}
< + \infty$ then $Q\ll Q^\pi$, i.e., $(\pi,Q)\in \ms S_\mathrm{ac}$.

\begin{proof}
  The first statement follows from the variational characterization of
  the relative entropy and the observation that since $\pi_0$ is
  centered it is enough to consider $\phi$ satisfying the stated
  constraint. 

  To prove the second statement, recall the definition of $Q^\pi$
  in \eqref{4} and observe that
  \begin{equation*}
   \frac 1 2 \int_0^T \de t \, \pi_t\otimes \pi_t(\lambda^F-\lambda)
    = Q^\pi \left( e^F - 1 \right).
  \end{equation*}
  This implies that if $\sup_F \Big[Q(F)-\frac 1 2 \int_0^T \de t \, \pi_t\otimes
  \pi_t(\lambda^F-\lambda)\Big]$ is finite, then
  $Q$ is absolutely continuous with respect to
  $Q^\pi$.

  The proof is now completed by a direct computation, see  Lemma 4.4 in \cite{BB}.
\end{proof}

\begin{proof}[Proof of Theorem \ref{t:1}: upper bound] 
  By the exponential tightness in Proposition \ref{Ptight}, to prove the upper bound
  it is enough to show the statement
  for compacts. By Lemma \ref{lemma:up-pf} and a
  mini-max argument, see e.g. \cite[App.2, Lemma~3.2]{KL}, the upper bound holds
  with the functional
  $$
  \hat I(\pi, Q) = \sup_{\phi,F} I_{\phi,F} (\pi, Q).
  $$
  Finally, by Proposition \ref{p:vr}, $\hat I = I$.

\end{proof}

\section{Lower bound}
\label{s:4}
In order to obtain the large deviation lower bound, given $(\pi, Q)$ we need to produces a perturbation of the dynamics such that the law of large number for $(\pi^N, Q^N)$ is  $(\pi, Q)$.
While the compactness of $(\pi^N, Q^N)$ follows from the arguments of the previous section, in order to identify the limit point we need uniqueness of the perturbed Boltzmann-Kac equation that we are able to prove only if the perturbed scattering rate is bounded. Therefore, we shall first prove the lower bound for open neighborhoods of ``nice'' $(\pi, Q)$, and then use a density argument, that will be completed with the restriction that $Q$ has bounded second moment. 

\subsection*{Perturbed Kac walks}


We start by the following law of large numbers for a class of
perturbed Kac's walks.
Consider perturbed time-dependent collision rates $\tilde r_t$, with density $\tilde B_t$, i.e.
\begin{equation}
  \label{rtilde}
  \tilde r_t (v,v_*; \de v',\de v_*') = \tilde B_t(v,v_*;w') \de w',
\end{equation}
which we assume to meet  condition (iv) in Assumption \ref{ass:1} uniformly  for  $t\in [0,T]$, and to satisfy the following extra condition.
There exists $C<+\infty$ such that for any $t, v, v_* \in [0,T] \times (\R^d)^2$
\begin{equation}\label{v'}
\tilde\lambda_t(v, v_*)=\int \tilde r_t (v,v_*; \de v',\de v_*') \leq C.
\end{equation}  



Given a probability $\nu$ on $\Sigma_{N,0}$ we denote by $\tilde{\bb
  P}_\nu^N$ the law of the perturbed Kac walk with initial datum $\nu$.

\begin{lemma}\label{l:perk}
  Fix a sequence of initial conditions $\nu^N$ as in Assumption
  \ref{ass:2}.  As $N\to \infty$, the pair $(\pi^N,Q^N)$ converges,
  in $\tilde{\bb P}_{\nu^N}^N$ probability, to
  $(f \de v\,, q\, \de t \de v\de v_* \de w')$, where
  $q_t(v,v_*,w') =\frac 1 2  f_t(v) f_t(v_*) \tilde B_t(v,v_*,w')$ and $f\in C\big([0,T]; L^1(\bb R^d)\big)$ is the
  unique  solution   to the
  perturbed Kac's equation
  \begin{equation}
    \label{perk}
    \begin{array}{l}
      \vspace{3pt}
             {\displaystyle \partial_t f_t(v) =
               \int \!\!\de v_* \de w' \,
      \big[\tilde B_t(v',v'_*;w) f_t(v') f_t(v_*')  -
        \tilde B_t(v,v_*;w')  f_t(v) f_t(v_*) \big]}, \\
      f_0(\cdot) = \frac{\de m}{\de v} \,, 
     \end{array} 
  \end{equation}  
  where $v'= \frac{v+v_*}2 + \frac{w'}{\sqrt{2}}$, $v_*'= \frac{v+v_*}2 -
  \frac{w'}{\sqrt{2}}$, and $w = \frac{v-v_*}{\sqrt{2}}$.
  Here we understand that \eqref{perk} holds by integrating against  continuous, bounded test functions which are continuous differentiable in time.
\end{lemma}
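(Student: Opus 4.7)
The plan is the classical three-step scheme for hydrodynamic limits: (i) establish tightness of $(\pi^N,Q^N)$ under $\tilde{\bb P}^N_{\nu^N}$; (ii) characterize every limit point as a weak solution of \eqref{perk} (equivalently, show $Q=Q^{\pi,\tilde r}$); (iii) invoke the uniqueness of such a solution. Together with the convergence $\pi^N_0\to m$ in $\nu^N$-probability, which follows from the local central limit theorem argument used in the proof of Lemma~\ref{lemma1}, these three ingredients produce convergence in $\tilde{\bb P}^N_{\nu^N}$-probability of $(\pi^N,Q^N)$ to a single deterministic limit, namely the pair described in the statement.

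For step (i), the arguments in Lemmata \ref{lemma1}--\ref{lemma3} transfer essentially verbatim to the perturbed law: the exponential super-martingale \eqref{mart1} remains a $\tilde{\bb P}^N_\nu$-super-martingale once the collision rate $r$ and the associated functionals $\lambda$, $\lambda^F$ are replaced by their perturbed counterparts, and the hypothesis that $\tilde B_t$ satisfies Assumption \ref{ass:1}(iv) uniformly in $t\in[0,T]$, together with the uniform bound \eqref{v'}, yields the same moment and temporal-modulus estimates. Consequently any limit point $(\pi,Q)$ lies in $\ms S_{\mathrm{ac}}$.

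For step (ii), the balance equation \eqref{bal1} already holds path-wise for $(\pi^N,Q^N)$, so it suffices to identify the limit of $Q^N$. For any bounded symmetric test function $F\in C_b([0,T]\times(\bb R^d)^4)$ with the symmetries stated in Lemma~\ref{lemma:up-pf}, insert $\eps F$ in place of $F$ in \eqref{mart1} (with $r$ replaced by $\tilde r_t$); using $\tilde{\bb E}^N_{\nu^N}[\bb N^{\eps F}_T]\le 1$ and expanding to second order in $\eps$ gives
\begin{equation*}
\tilde{\bb E}^N_{\nu^N}\Big[\Big(Q^N(F)-\tfrac12\int_0^T\!\de t\;\pi^N_t\otimes\pi^N_t\big(\tilde r_t(F)\big)\Big)^2\Big]\le \frac{C_F}{N},
\end{equation*}
where $\tilde r_t(F)(v,v_*):=\int \tilde r_t(v,v_*;\de v',\de v'_*)F(t;v,v_*;v',v'_*)$. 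Along any convergent subsequence of $(\pi^N,Q^N)$, the continuity of the map $\pi\mapsto Q^{\pi,\tilde r}$ on the set identified in step (i)---which follows from the uniform boundedness \eqref{v'}---identifies the limit as $Q=Q^{\pi,\tilde r}$. Inserting this back into \eqref{bal1} shows that $\pi$ is a weak solution of \eqref{perk}.

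The main technical point is step (iii), the uniqueness of the weak solution to \eqref{perk}; this is exactly where the bound \eqref{v'} is essential. Given two weak solutions $f^1,f^2\in C([0,T];L^1(\bb R^d))$ with the same initial datum, the symmetries of $\tilde B_t$ encoded in Assumption~\ref{ass:1}(ii) together with the bound $\tilde\lambda_t\le C$ yield
\begin{equation*}
\|f^1_t-f^2_t\|_{L^1(\bb R^d)}\le 4C\int_0^t\!\|f^1_s-f^2_s\|_{L^1(\bb R^d)}\,\de s,
\end{equation*}
so that $f^1\equiv f^2$ by Gronwall's inequality. As anticipated in Section~\ref{s:0}, this is precisely the restriction to bounded scattering rates that will later force the lower bound in Theorem~\ref{t:1} to be proved only for $Q$ with bounded second moment: those are exactly the profiles that can be approximated by perturbed rates satisfying \eqref{v'}.
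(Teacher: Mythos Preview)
Your three-step scheme is correct and the argument goes through, but the identification step (ii) is carried out differently from the paper. The paper does not redo a martingale variance computation; instead it observes that the entire large deviation \emph{upper bound} of Section~\ref{s:3} applies verbatim to the perturbed walk (with rate function $\tilde I$ built from $\tilde r$), and then uses the general fact that any cluster point $\tilde{\mc P}$ must be supported on $\{\tilde I=0\}$. Since $\tilde I(\pi,Q)=0$ forces $(\pi,Q)\in\ms S_{\mathrm{ac}}$ and $Q=\tilde Q^\pi$, the identification is immediate. Your route via the $L^2$ bound obtained from the second-order expansion of $\bb N^{\eps F}$ is more elementary and self-contained---it avoids re-invoking the full upper-bound machinery---while the paper's route is shorter because that machinery has already been built.

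Two small points in your write-up deserve tightening. First, after step~(i) you assert that limit points lie in $\ms S_{\mathrm{ac}}$; Lemmata~\ref{lemma1}--\ref{lemma3} only give items~(i) and~(ii) of Definition~\ref{def:sac}, not the absolute continuity $Q\ll Q^\pi$, which really comes out of your step~(ii). Second, your Gronwall argument in step~(iii) presupposes $f^1,f^2\in C([0,T];L^1(\bb R^d))$, but a limit point is a~priori only a path in $C([0,T];\ms P_0(\bb R^d))$. The paper bridges this by passing from the weak formulation to the integrated form \eqref{bint} and using the bound $\tilde\lambda_t\le C$ to obtain the Lipschitz estimate $\|f_t-f_s\|_{L^1}\le 2C(t-s)$; you should include this step before invoking uniqueness.
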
  

\begin{proof}
  Observe that the large deviation upper bound proven in the previous
  section holds also for the perturbed Kac's walk. The exponential
  tightness implies that the sequence
  $\{\widetilde{\bb P}_{\nu^N}^N\circ (\pi^N,Q^N)^{-1}\}$ is
  precompact in $\ms P_0 (\ms S)$.  Moreover, by the large deviation
  upper bound, any cluster point $\tilde{\mc P}$ of this sequence
  satisfies
  $\tilde{\mc P}\big( \big\{ (\pi,Q) \colon \tilde I(\pi,Q) =0 \}
  \big)=1$ where the rate function $\tilde I$ is defined as $I$ in
  \eqref{5}, \eqref{ihj},  with the rate $r$ replaced by the perturbed rate
  $\tilde r$.  Since $\tilde I (\pi, Q)< +\infty$ we get
  $(\pi, Q)\in \ms S _{\mathrm{ac}}$, in particular
  $\pi_t(\de v) = f_t(v) \de v$ and
  $Q(\de t;\de v,\de v_*,\de v',\de v_*') = q_t(v,v_*;w') \de t  \de v \de v_* \de w'$ for
  some densities $f$ and $q$. Then $\tilde I (\pi, Q)=0$ implies
  $Q=\tilde Q ^\pi$, where $\tilde Q ^\pi$ is defined as in \eqref{4}
  with $r$ replaced by $\tilde r$. The balance equation \eqref{bal}
  thus amounts to the weak formulation of \eqref{perk}.
  
  It remains to show that $f\in C([0,T]; L^1(\bb R^d))$ and that the solution to \eqref{perk} is unique.
  Choosing test functions independent of time and integrating \eqref{perk} we deduce that for each $t\in[0,T]$ and Lebesgue almost every $v$ it actually holds
  \begin{equation}
    \label{bint}
    \begin{aligned}
      & f_t (v) = f_0(v)
      \\ &\quad +
    \int_0^t\!\de s \int \!\de v_* \de w' \,
      \big[\tilde B_s(v',v'_*;w) f_s(v') f_s(v_*')  -
        \tilde B_s(v,v_*;w')  f_s(v) f_s(v_*) \big].
     \end{aligned} 
    \end{equation}
  Since $\tilde \lambda$ is bounded, it is now straightforward to show that $f\in C([0,T]; L^1(\bb R^d))$.
  Indeed, letting $C$ be the constant in \eqref{v'}, 
  from \eqref{bint} we get $\|f_t-f_s  \|_{L_1}\leq 2C (t-s)$ for $0\leq s\leq t\leq T$.
  Finally, using again \eqref{v'},  uniqueness is achieved
  by applying Gronwall's lemma to \eqref{bint} .
\end{proof}

The collection of ``nice'' $(\pi, Q)$ is specified as follows.
\begin{definition}\label{def:B}
Let  $\tilde{\ms S}$ be the collection of elements
  $(\pi, Q)\in \ms S_{\mathrm{ac}}$ whose densities $(f, q)$ are such
  that
\begin{equation}\label{def:B1}
  {\operatorname{ess}\sup}_{t, v,v_*, w'}\frac {q_t(v,\,v_*; w')}{f_t(v) f_t(v_*)}< +\infty,
\end{equation}
and
\begin{equation}\label{def:B2}
{\operatorname{ess}\sup}_{t, v,v_*}\int \de w' \frac {q_t(v,\,v_*; w')}{f_t(v) f_t(v_*)}e^{\eta |w'|^2}< +\infty,
\end{equation}
for some $\eta>0$.
\end{definition}

Given $(\pi, Q)\in \tilde{\ms S}$, denote by $\tilde r_t$ the time
dependent perturbed rate whose density is defined by
\begin{equation}\label{def:Bt}
\tilde  B_t(v, v_*, w') = \frac {2\,q_t(v,\,v_*; w')}{f_t(v) f_t(v_*)},
\end{equation}
that  meets
the condition (iv) in Assumption \ref{ass:1} uniformly for $t\in [0,T]$ and the extra assumption  \eqref{v'}.

The next statement provides the large deviation lower bound for  neighborhood of elements in $\tilde{\ms S}$. 
\begin{proposition}\label{l:lb}
  Let  $(\pi, Q)\in \tilde{\ms S}$. Assume that $\pi_0$ satisfies items
(i), (ii) in Assumption \ref{ass:2}, and suppose $\pi_0(\de v)=e^\phi
m(\de v)/m(e^\phi)$ for some $\phi$ bounded and continuous.
Moreover,
denote by
$\tilde \nu^N=\pi_0^{\otimes N}(\cdot\vert \sum_i v_i=0)$ the corresponding probability on $\Sigma_{N,0}$. Then
$$
\varlimsup_{N\to\infty}\frac 1 N H\Big(\tilde{\mathbb P}^N_{\tilde \nu^N}\vert \mathbb P^N_{\nu^N} \Big) =
I(\pi, Q).
$$
\end{proposition}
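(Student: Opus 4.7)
The plan is to compute the Radon-Nikodym derivative $d\tilde{\mathbb P}^N_{\tilde\nu^N}/d\mathbb P^N_{\nu^N}$ explicitly, integrate its logarithm against $\tilde{\mathbb P}^N_{\tilde\nu^N}$ (which equals $H(\tilde{\mathbb P}^N_{\tilde\nu^N} \vert \mathbb P^N_{\nu^N})$ divided by $N$ after normalization), and pass to the limit $N\to\infty$ using the law of large numbers from Lemma \ref{l:perk}. The derivative factorizes as the product of an initial factor $d\tilde\nu^N/d\nu^N$ and a path-space Girsanov factor for jump processes, the latter being the exponential super-martingale $\bb N^F_T$ of \eqref{mart1} evaluated at
\[
F_t(v,v_*,v',v_*') := \log\frac{\tilde B_t(v,v_*,w')}{B(v,v_*,w')},
\]
which is well-defined $Q^\pi$-almost everywhere by virtue of Assumption \ref{ass:1}(iii) (strict positivity of $B$) and \eqref{def:B1} (essential boundedness of $\tilde B_t$).

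\textbf{Initial contribution.} Since $\tilde\nu^N$ and $\nu^N$ are the conditionings of $\pi_0^{\otimes N}$ and $m^{\otimes N}$ on $\sum_i v_i = 0$ respectively, the same computation as in Lemma \ref{lemma0} yields
\[
\log\frac{d\tilde\nu^N}{d\nu^N}(\bs v) = N\pi^N(\bs v)(\phi) - N\log m(e^\phi) + \log f_N(0) - \log f^\phi_N(0).
\]
By Assumption \ref{ass:2} and the local central limit theorem, the last two terms are $o(N)$; integrating against $\tilde\nu^N$ and using the bounded continuity of $\phi$ to pass $\tilde\nu^N(\pi^N(\phi))$ to $\pi_0(\phi)$, one obtains $N^{-1} H(\tilde\nu^N \vert \nu^N) \to \pi_0(\phi) - \log m(e^\phi) = H(\pi_0 \vert m)$.

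\textbf{Dynamical contribution.} After dividing by $N$, the logarithm of $\bb N^F_T$ reads
\[
Q^N(F) - \tfrac 1 2 \int_0^T\!\!\de t\, \pi^N_t\otimes \pi^N_t(\tilde\lambda_t - \lambda) + o(1),
\]
since $\lambda^F = \tilde\lambda_t$ by \eqref{def:lambdaF}, the $o(1)$ coming from the diagonal correction $\pi^N_t(\Lambda^F - \Lambda)/N$. By Lemma \ref{l:perk}, under $\tilde{\mathbb P}^N_{\tilde\nu^N}$ one has $(\pi^N,Q^N)\to(\pi,Q)$ in probability. Granted uniform integrability, the $\tilde E$-limit of the above expression equals
\[
Q(F) - Q(1) + Q^\pi(1) = \int dQ^\pi\Big[\tfrac{dQ}{dQ^\pi}\log\tfrac{dQ}{dQ^\pi} - \tfrac{dQ}{dQ^\pi} + 1\Big] = J(\pi,Q),
\]
using the identities $\frac 1 2 \int_0^T \pi_t\otimes\pi_t(\tilde\lambda_t)\,\de t = Q(1)$ and its analogue for $Q^\pi$. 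Summing the initial and dynamical parts gives $I(\pi, Q)$.

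\textbf{Main obstacle.} The delicate point is the uniform integrability needed to pass $\tilde E[Q^N(F)]$ to $Q(F)$ and $\tilde E[\int \pi^N_t\otimes\pi^N_t\,\tilde\lambda_t\,\de t]$ to its limit. Indeed, $F$ has at most quadratic growth in $|w'|$ because the lower bound on $B$ in Assumption \ref{ass:1}(iii) is only Gaussian while $\tilde B$ is merely essentially bounded by \eqref{def:B1}, and $\tilde\lambda_t$, although bounded in view of \eqref{v'}, is integrated against the unbounded $\pi^N_t\otimes\pi^N_t$. The strategy is to reapply the super-martingale of \eqref{mart1} to the perturbed dynamics with exponential test functions involving $\eta|w'|^2$ and $\eta(|v|^2+|v_*|^2)$: condition \eqref{def:B2} together with Assumption \ref{ass:1}(iv) ensures that the corresponding compensator remains bounded uniformly in $N$, yielding Gaussian exponential moments on $Q^N$ and on $\sup_t\pi^N_t(\zeta)$ under $\tilde{\mathbb P}^N_{\tilde\nu^N}$, which is precisely the equi-integrability required.
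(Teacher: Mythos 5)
Your proposal follows essentially the same route as the paper: the same chain-rule splitting into the initial contribution (handled via Lemma \ref{lemma0} and the local CLT) and the dynamical Girsanov contribution with $F=\log(\de\tilde r_t/\de r)$, the same identification of the limit through the law of large numbers of Lemma \ref{l:perk}, and the same identity $Q(F)-Q(1)+Q^\pi(1)=J(\pi,Q)$. The only (minor) difference is in the uniform integrability step, where the paper's Lemma \ref{lemmanext} combines the exponential super-martingale tail bound for $\sup_t\pi^N_t(\zeta)$ with a second-moment bound on $Q^N(1+|w'|^2)$ via its predictable quadratic variation, while you propose exponential moment bounds for both via \eqref{mart1}; both are viable given \eqref{def:B2} and Assumption \ref{ass:1}(iv).
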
  
We premise the following lemma.

\begin{lemma}
  \label{lemmanext}
  Set $\zeta(v) = |v|^2$ and $\bar F (t,v,v_*,w')=\bar F(w')=1+ |w'|^2$, then
  \begin{equation*}
    \begin{aligned}
    &\sup_{N}\,  \tilde {\bb E}_{\tilde\nu^N}
\big(\sup_{t\in [0,T]}\pi^N_t(\zeta)\big)  < +\infty, \\
&    \sup_N \, \tilde {\bb E}_{\tilde\nu^N} \big( Q^N(\bar F)^2 \big) < +\infty.
\end{aligned}
\end{equation*}
  
\end{lemma}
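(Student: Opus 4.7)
The plan is to derive both bounds from the pathwise identity
\[
\pi^N_t(\zeta) \;=\; \pi^N_0(\zeta) + Q^N_{[0,t]}(|w'|^2) - Q^N_{[0,t]}(|w|^2),
\]
which is just the balance equation \eqref{bal1} with $\phi=\zeta$: between collisions $\pi^N(\zeta)$ is constant, and at each collision it changes by $(|w'|^2-|w|^2)/N$ by momentum conservation. Dropping the nonnegative term $Q^N_{[0,t]}(|w|^2)$ yields the \emph{deterministic} inequality
\[
\sup_{t\in[0,T]}\pi^N_t(\zeta) \;\le\; \pi^N_0(\zeta) + Q^N(|w'|^2),
\]
which reduces both claims to uniform moment estimates on $Q^N(|w'|^2)$ (plus $Q^N(1)$ for the second one).

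For the first statement I would decompose $Q^N_{[0,t]}(f) = A^f_t + M^f_t$ into its compensator and the associated martingale, where
\[
A^f_t \;=\; \frac{1}{N^2}\int_0^t\!\de s \sum_{\{i,j\}} \int \tilde r_s(v_i,v_j;\de v',\de v'_*)\, f.
\]
Since $|w'|^2 \le C_\eta e^{\eta|w'|^2}$, the uniform-in-$t$ version of Assumption~\ref{ass:1}(iv) and $\gamma<2$ give $\int \tilde r_s(v,v_*;\cdot)|w'|^2 \le C(1+|v|^2+|v_*|^2)$, and hence the pathwise bound $A^{|w'|^2}_t \le C\int_0^t(1+\pi^N_s(\zeta))\,\de s$. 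Taking expectations in the pathwise identity and using $\tilde{\mathbb E}[M^{|w'|^2}_t]=0$ yields $\tilde{\mathbb E}[\pi^N_t(\zeta)] \le C + C\int_0^t \tilde{\mathbb E}[\pi^N_s(\zeta)]\,\de s$, and Gronwall provides a uniform (in $N$ and $t\in[0,T]$) bound. The first claim then follows from $\tilde{\mathbb E}[\sup_t\pi^N_t(\zeta)] \le \tilde{\mathbb E}[\pi^N_0(\zeta)] + \tilde{\mathbb E}[A^{|w'|^2}_T]$, the initial moment being controlled exactly as in the proof of \eqref{cond1}.

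For the second statement I would iterate the argument one step up. Splitting $\bar F = 1+|w'|^2$ and using $(a+b)^2 \le 2a^2+2b^2$, it suffices to bound $\tilde{\mathbb E}[Q^N(1)^2]$ and $\tilde{\mathbb E}[Q^N(|w'|^2)^2]$. The bracket identity
\[
\tilde{\mathbb E}[(M^f_T)^2] \;=\; \frac{1}{N^3}\,\tilde{\mathbb E}\Big[\int_0^T\!\de s \sum_{\{i,j\}} \int \tilde r_s(v_i,v_j;\de v',\de v'_*)\, f^2\Big],
\]
applied to $f=|w'|^2$ (using Assumption~\ref{ass:1}(iv) for the $|w'|^4$ moment, combined with the first-moment bound) and to $f=1$ (using \eqref{v'}), gives $\tilde{\mathbb E}[(M^f_T)^2]=O(1/N)$ in both cases. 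On the compensator side, $A^1_T \le CT$ deterministically by \eqref{v'}, and $A^{|w'|^2}_T \le C(1+\int_0^T\pi^N_s(\zeta)\,\de s)$, so by Cauchy--Schwarz $\tilde{\mathbb E}[(A^{|w'|^2}_T)^2] \le C(1+T\int_0^T\tilde{\mathbb E}[\pi^N_s(\zeta)^2]\,\de s)$. To close the argument I would square the pathwise identity and repeat the same decomposition at the level of the second moment, which yields $\tilde{\mathbb E}[\pi^N_s(\zeta)^2] \le C + C\int_0^s\tilde{\mathbb E}[\pi^N_u(\zeta)^2]\,\de u$, and a second Gronwall finishes the proof.

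The main technical point I expect to worry about is the uniform-in-$N$ boundedness of the initial moments $\tilde{\mathbb E}[\pi^N_0(\zeta)^k]$ for $k=1,2$ under the conditioned law $\tilde\nu^N$; this follows from the local central limit theorem allowed by Assumption~\ref{ass:2}(i)--(ii), precisely as in the proof of \eqref{cond1}. A minor observation is that a single $\eta>0$ in Assumption~\ref{ass:1}(iv) controls both $|w'|^2$ and $|w'|^4$, since $|w'|^k \le C_\eta e^{\eta|w'|^2}$ for any fixed $\eta>0$ and any $k\ge 0$; the remaining steps are routine bookkeeping.
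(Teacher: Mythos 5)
Your proof is correct, but it follows a genuinely different route from the paper's. For the first bound the paper simply writes $\tilde{\bb E}_{\tilde\nu^N}\big(\sup_t\pi^N_t(\zeta)\big)=\int_0^\infty\de\ell\,\tilde{\bb P}_{\tilde\nu^N}\big(\sup_t\pi^N_t(\zeta)>\ell\big)$ and invokes the superexponential tail of Lemma \ref{lemma1}, which transfers verbatim to the perturbed walk; for the second it decomposes $Q^N(\bar F)$ into martingale plus compensator and observes that, because $\tilde B$ is of the form \eqref{def:Bt} with $(\pi,Q)\in\tilde{\ms S}$, condition \eqref{def:B2} makes $\int\de w'\,\tilde B_s(v,v_*,w')\bar F(w')$ bounded \emph{uniformly in $(v,v_*)$}, so both the compensator and the predictable bracket are deterministically $O(1)$ and no iteration is needed. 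You instead start from the pathwise energy balance $\pi^N_t(\zeta)\le\pi^N_0(\zeta)+Q^N_{[0,t]}(|w'|^2)$ and work only under the weaker standing hypothesis that $\tilde B$ satisfies item (iv) of Assumption \ref{ass:1} uniformly in $t$ together with \eqref{v'}; this forces the compensator bound $A^{|w'|^2}_t\le C\int_0^t(1+2\pi^N_s(\zeta))\de s$ and hence a Gronwall argument at the level of first and then second moments. What you gain is generality (the statement then holds for any admissible perturbed kernel, not just those coming from $\tilde{\ms S}$) and independence from Lemma \ref{lemma1}; what the paper's route buys is brevity. Two points you should make explicit if you write this up: (a) the identity $\tilde{\bb E}[M^f_t]=0$ and the a priori finiteness needed to start Gronwall require a routine localization (e.g.\ by the stopping times $\tau_\ell$ of Lemma \ref{lemma1}); (b) the uniform bound on $\tilde{\bb E}[\pi^N_0(\zeta)^2]$ under the conditioned product measure follows from the exponential moment bound \eqref{cond1} applied to $\tilde\nu^N$ via the tail estimate $\tilde{\bb P}(\pi^N_0(\zeta)>\ell)\le e^{-N(\gamma\ell-c)}$, not directly from Jensen. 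You correctly flag both as the delicate spots, and your formula for the bracket, $\langle M^f\rangle_T=N^{-3}\sum_{\{i,j\}}\int_0^T\de s\int\tilde r_s(v_i,v_j;\cdot)f^2$, is the right one.
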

\begin{proof}
  We write
  \begin{equation*}
\tilde {\bb E}_{\tilde\nu^N}
\big(\sup_{t\in [0,T]}\pi^N_t(\zeta)\big) = \int_0 ^\infty \!\!\de \ell\, \tilde{\bb P}_{\tilde\nu^N}\big(\sup_{t\in [0,T]}\pi^N_t(\zeta)>\ell \big).
  \end{equation*}  
 The first bound in the statement is achieved by observing   that Lemma \ref{lemma1}  holds also for the perturbed chain. 

 In order to prove the second bound, let
  \begin{equation*}
  \tilde{M}^{N}_t:= Q^N_{[0,t]}(\bar F)-\frac 1 {N^2}\sum_{\{i,j\}}\int_0^t \!\de s\,\int\! \de w'\, \tilde B_s (v_i,v_j, w') \bar F(w'),
  \end{equation*}
  that it is a $\tilde {\bb P}_{\tilde\nu^N}$ martingale, with predictable quadratic variation
  \begin{equation*}
   \langle \tilde{M}^{N}  \rangle_t = \frac 1 {N^2}\sum_{\{i,j\}}\int_0^t \!\de s\,\int\!\de w'\, \tilde B_s (v_i,v_j, w') \bar F(w').
   \end{equation*} 
    In view of \eqref{def:Bt} and  \eqref{def:B2} in Definition \ref{def:B},  the random variable $\langle \tilde{M}^{N}  \rangle_T $ is uniformly bounded in $N$. This completes the proof. 
  
\end{proof}

\begin{proof}[Proof of Proposition \ref{l:lb}]
  We first prove that
  \begin{equation}\label{ent1}
  \lim_{N\to\infty} \frac 1 N H(\tilde \nu^N \vert \nu^N) = H(\pi_0\vert m).
  \end{equation}
  By Lemma \ref{lemma0},
  \begin{equation*}
    \frac 1 N H(\tilde \nu^N \vert \nu^N)=\tilde\nu^N\big(\pi^N(\phi)  \big)
    -\log m(e^\phi)+ \frac 1 N \log \frac{f_N(0)}{f_N^\phi(0)},
  \end{equation*}  
where, by the local central limit theorem, the last term on the right hand side vanishes as $N\to+\infty$.
As a corollary of Lemma \ref{l:perk} we deduce that $\pi^N$ converges in $\tilde \nu^N$-probability to $\pi_0$. Hence, in view of assumptions on $\phi$, we deduce
$$
\lim_{N\to\infty} \frac 1 N \,H(\tilde \nu^N \vert \nu^N)=\pi_0(\phi)
-\log m(e^\phi)= H(\pi_0\vert m).$$
We now show that
\begin{equation}\label{ent2}
  \varlimsup_{N\to\infty}\frac 1 N\, H\big(\tilde{\bb P} ^N_{\tilde\nu^N}\vert  \bb P ^N_{\tilde\nu^N}\big)= J(\pi,Q).
\end{equation}
In view of the assumptions on $\tilde r_t$ the super-martingale defined in \eqref{mart1} with $F_t=\log(\de \tilde r_t/ \de r)$ is actually a martingale and its value at time $T$ is the Radon-Nykodim derivative of $\tilde{\bb P} ^N_{\tilde\nu^N}$ with respect to $\bb P ^N_{\tilde\nu^N}$. Since $\lambda_t^F=\tilde \lambda_t$ and $\Lambda^F_t(v)=\tilde\lambda_t(v,v)=:\tilde\Lambda_t$,
\begin{equation*}
  \frac 1 N \, H\big(\tilde{\bb P} ^N_{\tilde\nu^N}\vert  \bb P ^N_{\tilde\nu^N}\big)
  =\tilde{\bb E}_{\tilde\nu^N}\Big (Q^N (F)-\frac 1 2\int_0^T\!\! \de t\,\big[ \pi_t^N\otimes \pi_t^N(\tilde\lambda_t -\lambda)
  +\frac 1 N  \pi^N_t(\tilde\Lambda_t-\Lambda)\big]\Big).
\end{equation*}

By definition of $\tilde {\ms S}$
\begin{equation*}
  F_t=\log \frac{2\, q_t(v,v_*, w')}{f_t(v)f_t(v_*) B(v,v_*,w')}\leq C (1 + |w'|^2).
\end{equation*}  
where we used Assumption \ref{ass:1}, item (iii).

Now observe that, by Lemma \ref{l:perk}, $(\pi^N,Q^N)$  converges to $(\pi,Q)$ in $\tilde{\bb P}^N_{\tilde \nu^N}$ probability.
Moreover, Lemma \ref{lemmanext} provides sufficient conditions for the uniform integrability of $Q^N(F)$ and of   $\int_0^T\!\! \de t\, \pi_t^N\otimes \pi_t^N(\lambda)$.
Finally, by the boundedness of $\tilde\lambda_t$ and the absolutely continuity of $\pi_t$ we obtain
$$
\varlimsup_{N\to\infty}\frac 1 N \, H\big(\tilde{\bb P} ^N_{\tilde\nu^N}\vert  \bb P ^N_{\tilde\nu^N}\big)
=
Q(F)-\frac 1 2 \int_0^T\!\! \de t\, \pi_t\otimes \pi_t(\lambda^F -\lambda)=J(\pi,Q).
$$
Recalling \eqref{ihj}, the statement follows from \eqref{ent1} and \eqref{ent2}.

\end{proof}

By general results, see e.g.  \cite{Mar}, the previous proposition implies
the following lower bound statement.

\begin{corollary}\label{cor:1}
  Let $\nu^N$ be as in Assumption \ref{ass:2} and set
  \begin{equation*}
    \tilde I (\pi,Q)=\begin{cases}
    I(\pi, Q) & \mbox{if } (\pi, Q)\in \tilde{\ms S}\\
    +\infty & \mbox{otherwise}.
    \end{cases}
  \end{equation*}
  Then the sequence $\big\{\bb P^N_{\nu^N}\circ (\pi^N,Q^N)^{-1}\big\}$ satisfies a large deviations lower bound  with
    rate function $\tilde I $.
\end{corollary}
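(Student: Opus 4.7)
The plan is to deduce the corollary from Proposition \ref{l:lb} via the standard change-of-measure argument for large-deviation lower bounds, which is precisely the ``general result'' of \cite{Mar}. The workhorse is the entropy inequality: for any probability measures $\mu\ll \nu$ on a measurable space and any measurable set $A$ with $\mu(A)>0$,
\begin{equation*}
\nu(A)\;\geq\; \mu(A)\,\exp\!\Big(-\tfrac{1}{\mu(A)}\big[H(\mu\vert\nu)+e^{-1}\big]\Big).
\end{equation*}

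First I would establish the bound at those $(\pi,Q)\in \tilde{\ms S}$ for which Proposition \ref{l:lb} applies directly, namely with $\pi_0=e^{\phi}m/m(e^{\phi})$ for some bounded continuous $\phi$. Fix such a $(\pi,Q)$ with $I(\pi,Q)<+\infty$, an open set $A\subset\ms S$ containing $(\pi,Q)$, and apply the entropy inequality with $\mu=\tilde{\bb P}^N_{\tilde\nu^N}$, $\nu=\bb P^N_{\nu^N}$, and event $\{(\pi^N,Q^N)\in A\}$. Lemma \ref{l:perk} gives $\mu\big((\pi^N,Q^N)\in A\big)\to 1$, while Proposition \ref{l:lb} gives $\limsup_N \tfrac{1}{N} H(\mu\vert\nu)= I(\pi,Q)$. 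Taking $\varliminf$ in the entropy inequality then yields $\varliminf_N \tfrac{1}{N}\log \bb P^N_{\nu^N}\big((\pi^N,Q^N)\in A\big)\geq -I(\pi,Q)$.

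Next I would extend to a general $(\pi,Q)\in \tilde{\ms S}$ with $I(\pi,Q)<+\infty$ by approximation. Since $H(\pi_0\vert m)<+\infty$, one can write $\de\pi_0/\de m=e^{\psi}$ and truncate $\psi$ to a bounded continuous function $\phi_k$, correcting by a small linear term so that the normalized measure $\pi_0^k:=e^{\phi_k}m/m(e^{\phi_k})$ lies in $\ms P_0(\bb R^d)$; one checks that $\pi_0^k\to \pi_0$ weakly and $H(\pi_0^k\vert m)\to H(\pi_0\vert m)$. Let $(\pi^k,Q^k)$ be the element of $\tilde{\ms S}$ obtained by solving \eqref{perk} with the \emph{same} perturbed kernel $\tilde B$ from \eqref{def:Bt} but initial datum $\pi_0^k$. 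A Gronwall estimate of the type used in Lemma \ref{l:perk}, together with the uniform boundedness \eqref{v'} of $\tilde\lambda$, shows that $(\pi^k,Q^k)\to(\pi,Q)$ in $\ms S$; the uniform integrability provided by \eqref{def:B1}--\eqref{def:B2} and dominated convergence then yield $J(\pi^k,Q^k)\to J(\pi,Q)$, whence $I(\pi^k,Q^k)\to I(\pi,Q)$. Since $A$ is open, eventually $(\pi^k,Q^k)\in A$, and the first step applied to $(\pi^k,Q^k)$ gives the required bound after letting $k\to\infty$.

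The main obstacle is the density step: one must construct the approximants $\pi_0^k$ respecting simultaneously the zero-mean constraint and the tail and regularity conditions inherited from Assumption \ref{ass:2}, and then show that the induced trajectories $(\pi^k,Q^k)$ converge \emph{jointly} in the topology of $\ms S$ strongly enough for $J(\pi^k,Q^k)\to J(\pi,Q)$. All ingredients are standard but require some care with the interplay between the approximation of the initial data, the weak topology on $\ms M$, and the Gaussian tail bounds encoded in Definition \ref{def:B}.
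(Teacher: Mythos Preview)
Your Step~1 is exactly the argument encoded in the paper's citation to \cite{Mar}: the entropy inequality combined with the law of large numbers for the tilted walk (Lemma~\ref{l:perk}) and the entropy identification (Proposition~\ref{l:lb}) yields the lower bound at the point $(\pi,Q)$. The paper gives no further detail, so on this part you and the paper agree.

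Your Step~2 addresses a point the paper passes over in silence: Proposition~\ref{l:lb} carries the extra hypotheses that $\pi_0$ satisfies items (i)--(ii) of Assumption~\ref{ass:2} and that $\log(\de\pi_0/\de m)$ is bounded and continuous, whereas Corollary~\ref{cor:1} is stated for all of $\tilde{\ms S}$. Your idea of keeping the perturbed kernel $\tilde B$ fixed and only moving the initial datum is the right one, and it buys you more than you indicate: since $\tilde B$ is unchanged, the Radon--Nikodym derivative $\de Q^k/\de Q^{\pi^k}=\tilde B/B=\de Q/\de Q^\pi$ is independent of $k$, so $J(\pi^k,Q^k)=\int \de Q^{\pi^k}\,\Psi(\tilde B/B)$ differs from $J(\pi,Q)$ only through the base measure $\frac12 f^k_t f^k_{t*}B$; the convergence $J(\pi^k,Q^k)\to J(\pi,Q)$ then reduces to an $L^1$-stability estimate for $f^k$ plus the uniform integrability already available from \eqref{def:B2} and Assumption~\ref{ass:1}(iv). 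Two genuine checks remain: you must choose $\phi_k$ so that $\pi_0^k$ actually satisfies item~(ii) of Assumption~\ref{ass:2} (the Fourier-transform condition needed for the local CLT in the proof of Proposition~\ref{l:lb}; mere boundedness of $\phi_k$ does not guarantee this), and you must verify that $\sup_t\pi^k_t(\zeta)<\infty$ so that $(\pi^k,Q^k)\in\ms S_{\mathrm{ac}}$---this is not automatic from the Gronwall estimate in $L^1$ and requires the Gaussian tail in \eqref{def:B2}. Both are manageable, but deserve an explicit sentence.
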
  

\subsection*{Approximating paths with bounded  rate function}  %
  %
   Recall that  the set $\hat{\ms S}$ has been defined in \eqref{QV}.

\begin{theorem}\label{the:approx}
  %
  %
   
  For each  $(\pi, Q)\in \hat{\ms S}$ such that  $I(\pi,Q) < +\infty$
  there exists a sequence $\{(\pi_n,Q_n)\}\subset \tilde{\ms S}\cap \hat{\ms S}$ satisfying
  $(\pi_n,Q_n)\to (\pi, Q)$ and $I(\pi_n,Q_n)\to I(\pi, Q)$.
\end{theorem}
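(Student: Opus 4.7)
The plan is a three-stage regularization that uses in an essential way the linearity of the momentum conservation constraint $v+v_*=v'+v_*'$, exactly as anticipated in the introduction. Write $\pi_t=f_t\,\de v$, and parameterize the density of $Q$ in the center-of-mass coordinates $c=(v+v_*)/2$, $w=(v-v_*)/\sqrt 2$, $w'=(v'-v_*')/\sqrt 2$, so that $Q=q_t(c,w,w')\,\de t\,\de c\,\de w\,\de w'$. Throughout, each stage preserves the balance equation \eqref{bal} by construction.

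\textbf{Stage 1 (Convex combination).} Fix once and for all a reference element $(\pi^\star,Q^\star)\in\tilde{\ms S}\cap\hat{\ms S}$ with $I(\pi^\star,Q^\star)<+\infty$. Concretely, take $\pi^\star_t=M\,\de v$ for a centered Maxwellian $M$ compatible with Assumption~\ref{ass:2}(iii), and $Q^\star=\tfrac12 M(v)M(v_*)\tilde B^\star(v,v_*,w')$ for a bounded, smooth $\tilde B^\star$ in detailed balance with $M$ and with Gaussian tails in $w'$. For $\epsilon\in(0,1)$ set $(\pi^{(1)}_\epsilon,Q^{(1)}_\epsilon):=(1-\epsilon)(\pi,Q)+\epsilon(\pi^\star,Q^\star)$. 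Linearity of \eqref{bal} gives $(\pi^{(1)}_\epsilon,Q^{(1)}_\epsilon)\in\ms S_{\mathrm{ac}}$, and the density satisfies $f^{(1)}_\epsilon\ge\epsilon M$, i.e., a Gaussian lower bound. By Proposition~\ref{p:vr}, $J$ is convex and lower semicontinuous, so $J(\pi^{(1)}_\epsilon,Q^{(1)}_\epsilon)\to J(\pi,Q)$ and $H(\pi^{(1)}_{\epsilon,0}|m)\to H(\pi_0|m)$ as $\epsilon\downarrow 0$.

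\textbf{Stage 2 (Mollification in the center of mass).} Let $\rho_\delta$ be a symmetric Schwartz mollifier on $\bb R^d$. Define $f^{(2)}_{\epsilon,\delta}(t,v):=(f^{(1)}_\epsilon)_t\ast\rho_\delta(v)$ and $q^{(2)}_{\epsilon,\delta}(t,c,w,w'):=\int\rho_\delta(c-c')(q^{(1)}_\epsilon)_t(c',w,w')\,\de c'$. Since the only $(v,v_*)$-coupling in \eqref{bal} is through $c$, translating simultaneously in $c$ and $v$ leaves \eqref{bal} invariant, so $(\pi^{(2)},Q^{(2)})\in\ms S_{\mathrm{ac}}$. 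By the joint convexity of $(a,b)\mapsto a\log(a/b)-a+b$ together with Jensen's inequality applied in the $c$-variable (using that $Q^{\pi^{(2)}}$ has the appropriate convolution factorization), $J(\pi^{(2)},Q^{(2)})\le J(\pi^{(1)},Q^{(1)})$, and lower semicontinuity yields convergence as $\delta\downarrow 0$. After this stage $f^{(2)}$ is smooth with $f^{(2)}\ge\epsilon M\ast\rho_\delta$, a continuous strictly positive Gaussian-type lower bound.

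\textbf{Stage 3 (Truncation into $\tilde{\ms S}$).} Choose smooth cutoffs $\chi_n(w')$ supported on $\{|w'|\le n\}$ and $\psi_n(c,w)$ supported on $\{|c|,|w|\le n\}$. Define
\begin{equation*}
q^{(3)}_{\epsilon,\delta,n}:=\chi_n\psi_n\, q^{(2)}_{\epsilon,\delta}+(1-\chi_n\psi_n)\,q^\star
\end{equation*}
and let $f^{(3)}$ be the unique solution of the linear continuity equation obtained from \eqref{bal} with source $q^{(3)}$ and initial datum $f^{(2)}_{\epsilon,\delta,0}$. Because $f^{(2)}\ge\epsilon M\ast\rho_\delta$ and $q^\star/(M\otimes M)=\tilde B^\star/2$ is bounded with Gaussian tails in $w'$, the ratio $2q^{(3)}/(f^{(3)}\otimes f^{(3)})$ is essentially bounded (the first summand is bounded thanks to $\chi_n\psi_n$, the denominator bounded below by a positive continuous function) and has Gaussian tails inherited from $\tilde B^\star$, so $(\pi^{(3)},Q^{(3)})\in\tilde{\ms S}$. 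Furthermore the hypothesis $(\pi,Q)\in\hat{\ms S}$ (i.e., $\int Q|v+v_*|^2<+\infty$) passes to $(\pi^{(3)},Q^{(3)})$ via the $\psi_n$-truncation in $c$ together with the second-moment bound on $Q^\star$.

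\textbf{Convergence and main obstacle.} Sending $n\to\infty$ at fixed $(\epsilon,\delta)$, dominated convergence for $J$ applies since the densities are bounded by $(q^{(2)}_{\epsilon,\delta}+q^\star)$, whose relative entropy against $Q^{\pi^{(2)}}$ is finite; then sending $\delta\downarrow 0$ and finally $\epsilon\downarrow 0$ combines with the continuity established in Stages 1 and 2. A diagonal extraction produces the desired sequence $(\pi_n,Q_n)$. The delicate step is Stage 3: one must verify that replacing $q^{(2)}$ by $q^{(3)}$ in the complementary region $\{1-\chi_n\psi_n>0\}$ does not inflate $J$, which is where the hypothesis $\int Q|v+v_*|^2<+\infty$ is used to guarantee that the mass removed by the $\psi_n(c,w)$-truncation vanishes as $n\to\infty$ quickly enough to preserve convergence of the entropy integral; this is the technical reason the lower bound in Theorem~\ref{t:1} is restricted to $\hat{\ms S}$.
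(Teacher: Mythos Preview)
Your three-stage scheme has the right spirit but contains genuine gaps that the paper's proof is designed to avoid.

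\textbf{Stage 1.} The claim that $J$ is convex in $(\pi,Q)$ is false: by Proposition~\ref{p:vr} the second term is $Q^\pi(e^F-1)$ with $Q^\pi$ depending \emph{quadratically} on $\pi$, so the functional is not jointly convex and the convergence $J(\pi^{(1)}_\epsilon,Q^{(1)}_\epsilon)\to J(\pi,Q)$ does not follow from convexity plus lower semicontinuity. This is repairable by a direct argument, but not as written.

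\textbf{Stage 2.} The Jensen step does not go through. If you convolve $f^{(1)}$ in the $v$-variable, then $f^{(2)}(v)f^{(2)}(v_*)$, written in $(c,w)$ coordinates, is a convolution of $f^{(1)}(v)f^{(1)}(v_*)$ in \emph{both} $c$ and $w$ (independent shifts of $v$ and $v_*$ move both $c$ and $w$). Meanwhile you convolve $q^{(1)}$ only in $c$. So numerator and denominator in $q^{(2)}/Q^{\pi^{(2)}}$ are averaged with different kernels and the joint convexity of $(a,b)\mapsto a\log(a/b)$ cannot be invoked. The paper's Step~1 convolves with Gaussians in \emph{all} variables $(v,v_*,w')$, exploiting the identity $g_\delta(v)g_\delta(v_*)g_\delta(w')=g_\delta(v')g_\delta(v'_*)g_\delta(w)$ so that both $q$ and the reference $f f_* g(w')$ receive the \emph{same} smoothing; this is why the decomposition of Lemma~\ref{l:qlog}(ii), with $g(w')$ in place of $B$, is introduced.

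\textbf{Stage 3.} Two problems. First, since Stage~2 smooths only in $c$, the density $q^{(2)}$ need not be locally bounded in $(w,w')$; hence $\chi_n\psi_n\,q^{(2)}$ is in general not in $L^\infty$, and the ratio $2q^{(3)}/(f^{(3)}f^{(3)}_*)$ fails condition~\eqref{def:B1}, so you have not landed in $\tilde{\ms S}$. Second, defining $f^{(3)}$ as ``the solution of the continuity equation with source $q^{(3)}$'' gives no control on its sign: nothing prevents $f^{(3)}$ from becoming negative, and the asserted lower bound $f^{(3)}\ge \epsilon M\ast\rho_\delta$ is unjustified. The paper's Step~2 handles both issues simultaneously: it sets $\tilde q^\ell=(q\wedge\ell)\id_{\Omega_\ell}$ (a genuine pointwise truncation, available because Step~1 made $q$ smooth) and reconstructs $\tilde f^\ell$ by the explicit formula~\eqref{tildeell}, whose correction term is engineered precisely so that $f^\ell\ge c_\ell f$ on $\{|v|\le\ell\}$ via the identity~\eqref{variqtilde}. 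This positivity is what makes the ratio in~\eqref{def:B1} finite. The $L^2$ bound on $q^{(3)}_t$ needed to control the correction term comes, in turn, from the full mollification of Step~1.

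In short, the paper's two steps are tightly coupled: full Gaussian mollification is what simultaneously enables Jensen's inequality (via Lemma~\ref{l:qlog}(ii)), makes $q$ pointwise bounded, and provides the $L^1_tL^2_v$ regularity of the marginal used in the positivity argument of Step~2. Your partial mollification in $c$ alone delivers none of these.
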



The lower bound in Theorem \ref{t:1} follows directly from Corollary \ref{cor:1} and the above theorem.
In order to prove it, we premise the following lemma.
\begin{lemma}\label{l:qlog}
  Let $(\pi, Q)\in \hat{\ms S}$ be such that $I(\pi, Q)<+\infty$. Denote by $(f,q)$ the densities of $(\pi, Q)$.
  Moreover, set $\sigma:=B(v,v_*,w')/g(w')$ and
  $\Phi:= \log \big( \frac {2\, q(v,v_*,w')} {f(v) f(v_*) g(w')}\big)$,
where $g$ is the standard Gaussian density on $\bb R^d$. Then
  \begin{itemize}
  \item[(i)]$\displaystyle \int Q(\de t; \de v,\de v_*,\de v',\de v'_*)\big(|v|^2+|v_*|^2+|v'|^2+|v_*'|^2\big)  < +\infty$.
  \item[(ii)]  $\displaystyle I(\pi, Q)= H(\pi_0\vert m) +  Q(\Phi) - Q(\log \sigma)  -Q(1) + Q^\pi(1)$.
   \end{itemize} 
\end{lemma}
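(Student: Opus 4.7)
The plan is to establish (i) first, since the moment bounds it provides are exactly what is needed to give meaning to the term-by-term decomposition of $J$ in (ii). The strategy for (i) is a two-step bootstrap: first bound $Q(|w'|^2)$ via the Young inequality combined with the Gaussian tail of $B$ in Assumption \ref{ass:1}(iv), then bound $Q(|w|^2)$ by testing the balance equation \eqref{bal} against a truncation of $|v|^2$ and using momentum conservation.

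For the first step, I would set $\rho:=\de Q/\de Q^\pi$ and use the elementary inequality $\rho F\le (\rho\log\rho-\rho+1)+(e^F-1)$; integrating against $\de Q^\pi$ with $F=\eta|w'|^2$ gives
\begin{equation*}
\eta\, Q(|w'|^2)\le J(\pi,Q)+\int\!\big(e^{\eta|w'|^2}-1\big)\,\de Q^\pi.
\end{equation*}
Assumption \ref{ass:1}(iv) bounds the inner $\de w'$-integral of $B\,e^{\eta|w'|^2}$ by $C(1+|v-v_*|^\gamma)$; since $\gamma<2$ and $\sup_t\pi_t(|v|^2)<+\infty$ by Definition \ref{def:sac}(ii), the right-hand side is finite, so $Q(|w'|^2)<+\infty$. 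Momentum conservation gives $|v'|^2+|v_*'|^2=\tfrac12|v+v_*|^2+|w'|^2$, which combined with $(\pi,Q)\in\hat{\ms S}$ yields $\int\!(|v'|^2+|v_*'|^2)\,\de Q<+\infty$. For the second step I would test \eqref{bal} against the bounded, continuous, $t$-independent function $\phi_n(v):=|v|^2\wedge n$: on $\ms V$ one has $|v|^2+|v_*|^2-|v'|^2-|v_*'|^2=|w|^2-|w'|^2$, and by monotone convergence the loss and gain parts converge separately. The left-hand side $\pi_T(\phi_n)-\pi_0(\phi_n)$ converges to the finite limit $\pi_T(|v|^2)-\pi_0(|v|^2)$, and the gain part converges to the finite limit just established. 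The balance equation then forces the loss to have a finite limit, giving $\int\!(|v|^2+|v_*|^2)\,\de Q<+\infty$, and in particular $Q(|w|^2)<+\infty$. Together with the preceding bounds, this proves (i).

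For (ii), a direct computation from the definition of $Q^\pi$ in \eqref{4} gives $\log\rho=\Phi-\log\sigma$, so from \eqref{5} one has, formally,
\begin{equation*}
J(\pi,Q)=\int\!\log\rho\,\de Q - Q(1) + Q^\pi(1) = Q(\Phi)-Q(\log\sigma)-Q(1)+Q^\pi(1).
\end{equation*}
To justify the splitting I would verify that each term is separately finite. The lower bound $B\ge c_0 e^{-c_0|w'|^2}$ from Assumption \ref{ass:1}(iii), the upper bound from (v), and $|\log g|\le C(1+|w'|^2)$ together give $|\log\sigma|\le C(1+|v|^2+|v_*|^2+|w'|^2)$, so (i) yields $Q(|\log\sigma|)<+\infty$. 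The bound \eqref{(v)} and $\sup_t\pi_t(\zeta)<+\infty$ imply $Q^\pi(1)<+\infty$, and Young's inequality with $F\equiv 1$ gives $Q(1)<+\infty$. Since $x\log x\ge-1/e$, the identity $\int\!\rho\log\rho\,\de Q^\pi=J(\pi,Q)+Q(1)-Q^\pi(1)$ shows that $Q(\log\rho)$ is finite, hence $Q(\Phi)=Q(\log\rho)+Q(\log\sigma)$ is finite; adding $H(\pi_0|m)$ produces the claimed formula.

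The main delicate point I anticipate is the truncation argument in the second step of (i): one needs to bound the gain part $\int\!(|v'|^2+|v_*'|^2)\,\de Q$ \emph{before} letting $n\to\infty$ in the balance equation, which is precisely why the Young-inequality bound on $Q(|w'|^2)$ must be carried out first and why the assumption $(\pi,Q)\in\hat{\ms S}$, i.e.\ $\int|v+v_*|^2\,\de Q<+\infty$, is essential.
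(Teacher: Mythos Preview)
Your proposal is correct and follows essentially the same route as the paper: you first bound $Q(|w'|^2)$ via the entropy inequality (which is exactly the one-sided use of the variational formula for $J$ in Proposition~\ref{p:vr}), then pass to the limit in the balance equation tested against $|v|^2\wedge n$ to obtain $Q(|v|^2+|v_*|^2)<\infty$, and finally use the two-sided bounds on $B$ from Assumption~\ref{ass:1}(iii),(v) together with (i) to justify the decomposition in (ii). The only cosmetic difference is that the paper invokes Proposition~\ref{p:vr} rather than the Young inequality directly, and is terser on (ii); your write-up spells out the finiteness of each term, which is harmless.
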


\begin{proof}
  We start by proving
  \begin{equation}\label{dario}
\displaystyle \int Q(\de t; \de v,\de v_*,\de v',\de v'_*)|v'-v_*'|^2  < +\infty.
  \end{equation}
  By Assumption \ref{ass:1}, item (iv), there exists $\eta>0$ such that 
  $$
  \int Q^\pi (\de t; \de v,\de v_*,\de v',\de v'_*)\exp\{\eta |v' - v'_*|^2  \} < +\infty.
  $$
  Recall that $I(\pi, Q)= H(\pi_0\vert m) + J(\pi, Q)$, where $J$ is
defined in \eqref{5}. By the variational representation of $J$ in
Proposition~\ref{p:vr}, 
for any $F$ bounded and continuous
  $$
Q(F)\leq \frac 1 2 \int_0^T \de t\, \pi_t\otimes\pi_t (\lambda^F -\lambda) + J(\pi, Q).
$$
By a truncation argument, we can choose $F=\eta |v'-v'_*|^2$, and thus deduce \eqref{dario}.

Let $\zeta_n(v)=|v|^2\wedge n$. 
By using the continuity equation 
we get
$$
\pi_T(\zeta_n)-\pi_0(\zeta_n)=\int \de Q\big[\zeta_n(v') + \zeta_n(v'_*)-\zeta_n(v)-\zeta_n(v_*)].
$$
By item (ii) in Definition \ref{def:sac}, \eqref{QV}, and \eqref{dario}, taking the limit $n\to \infty$ we deduce
$$
\displaystyle \int Q(\de t; \de v,\de v_*,\de v',\de v'_*)\big(|v|^2 +|v_*|^2 \big) < +\infty.
$$
which, together with the conservation of the momentum, implies the statement (i).
 
  In view of Assumption \ref{ass:1}, items (iii) and (v), the statement (ii) follows from (i).
  
\end{proof}

\begin{proof}[Proof of Theorem \ref{the:approx}]{}$~$
Observe that by the lower semicontinuity of $I$, for any sequence $(\pi_n, Q_n)\to (\pi, Q)$ we have $\varliminf_n I(\pi_n, Q_n)\ge I(\pi, Q)$. The converse inequality is 
achieved by combining steps 1 and 2 below and  a standard diagonal argument.
  
\noindent
{\bf Step 1 - Convolution.}
Since $I(\pi, Q)<+\infty$, there exist $(f,q)$ such that
$d\pi_t=f_t(v)\de v$  and
$dQ= q_t(v,v_*, w')\de t \de v \de v_* \de w'$
where 
$w=(v-v_*)/\sqrt 2$. 

Given $0<\delta<1$, let $g_\delta$ be the Gaussian kernel on $\bb R^d$ with variance $\delta$ and 
define
     \begin{equation*}\begin{split}
         & f^\delta_t(v)=
         \int \de u\, g_\delta (v-u) f_t(u)\\
         & q^\delta_t(v, v_*, w')=
         \int\! \de u \de u_* \de z'\, g_\delta (v-u)
         g_\delta(v_*-u_*) g_\delta(w'-z')q_t(u, u_*, z').
        \end{split} 
     \end{equation*}
     We now show that the pair $(f^\delta, q^\delta)$ satisfies the balance equation. Given a test function $\phi$ and denoting by $*$ the convolution 
     \begin{equation*}\begin{split}
         \int_0^T \de t\int \de v\, \de v_* \de w' \, q^\delta_t(v, v_*,w') \phi_t(v)
       = \int_0^T \de t  \int\! \de v\,\de v_* \de w' \,q_t(v, v_*, w') g_\delta *\phi_t(v).
      \end{split}\end{equation*} 
     One can repeat the same argument with $\phi(v)$ replaced by $\phi(v_*)$.
     Moreover, since $g_\delta(v)g_\delta (v_*)g_\delta(w')=g_\delta(v')g_\delta (v'_*)g_\delta(w)$, where $w=(v-v_*)/\sqrt 2 $, it holds also when $\phi(v)$ is replaced by
     $\phi(v')$, $\phi(v'_*)$. Using the balance equation for the pair $(f,q)$ with the test function
     $g_\delta  *\phi$ we deduce that $(f^\delta, q^\delta)$ satisfies the balance equation.

Now we show that
\begin{equation}  \label{eq:Idelta}
\limsup_{\delta \to 0} I(f^{\delta}, q^{\delta}) \leq I(\pi, Q).
\end{equation}
To this end, we use the decomposition provided by item (ii) of Lemma \eqref{l:qlog}.
We start by observing that, in view of  \eqref{(v)} and
$\sup_{t}\pi_t(\zeta)< +\infty$, $\zeta(v)= |v|^2$, by dominated convergence 
$$
\lim_{\delta\to 0} \frac 12 \int \de t \de v  \de v_* \,f_t^\delta(v)
f_t^\delta(v_*) \lambda (v, v_*) = Q^\pi(1).
$$
Analogously, since $|\log\sigma|\leq C(|v|^2 + |v_*|^2 + |w'|^2)$, in view of Lemma \ref{l:qlog}, item (i)
$$
\lim_{\delta \to 0} \int \de t \de v  \de v_*  \de w' \,q_t^\delta(v, v_*, w')\big(1 + \log \sigma (v, v_*, w') \big)\leq  Q(1)+ Q(\log \sigma).
$$
By the convexity of the map $[ 0, +\infty)^2 \ni (a,b)\mapsto a\log(a/b)$ and Jensen's inequality
  \begin{equation*}
    \int \de t \de v  \de v_*  \de w' \,q_t^\delta (v, v^*, w')\log \frac {2\, q_t^\delta (v, v^*, w')}{f^\delta_t(v)f^\delta_t(v^*)g_{1+\delta}(w')}
    \leq Q(\Phi).
  \end{equation*}  
  By Lemma \ref{l:qlog}, item (i),
  $$
\lim_{\delta\to 0} \int \de t \de v  \de v_*  \de w' \,q_t^\delta (v, v^*, w')\log \frac {g_{1+\delta}(w')}{g_1(w')}=0.
  $$
 Gathering the above statements we deduce
$$
\limsup_{\delta\to 0} J(f^\delta, q^\delta) \leq J(\pi,Q).
$$
To conclude the proof of   \eqref{eq:Idelta}
it remains to show that
\begin{equation}\label{Hd}
\limsup_{\delta\to 0}H(f^\delta_0| m) \le H(f_0|m). 
\end{equation}
By item (iii) in  Assumption \ref{ass:2}, since $f_0$ has bounded second moment, we have
$$
H(f^\delta_0| m)=\int f^\delta_0\log f^\delta_0 + \int f^\delta_0 \log \frac {1} m.
$$
The bound  \eqref{Hd} follows by using item (iii) in Assumption \ref{ass:2} and Jensen's inequality.

\medskip   
\noindent
{\bf Step 2 - Truncation of the flux.}
Given a pair $(f,q)$ that satisfies the balance equation, we denote
by $q_t^{(i)}$, $i=1,\ldots ,4$ the marginal of $q_t$
respectively on $v,v_*,v',v_*'$. Then
$q_t^{(1)}= q_t^{(2)}$,
$q_t^{(3)}= q_t^{(4)}$, and the balance equation reads 
$$\partial_t f_t = 2\big( q_t^{(3)} - q_t^{(1)}\big).$$
In the sequel we assume  $(f,q)$ such that $I(f,q)<+\infty$, $f$ strictly positive on compacts uniformly in time,
and  $q_t^{(3)}\in L^1([0,T]; L^2(\bb R^d))$.
Observe that pair $(f^\delta, q^\delta)$
constructed in Step 1 meets the above conditions. Indeed, by item (ii) in Definition \ref{def:sac}, for each compact subset $K\in\bb R^d$ and $\delta\in (0,1)$,
there exists $c_{K,\delta} >0$ such that, for any $t\in[0,T]$ we have
$\inf_{v\in K} g_\delta * f_t \geq c_{K, \delta}$. Moreover,
by Young inequality, $\int_0^T \de t\,\| q_t^{\delta, (3)}\|_{L^2} \leq c_\delta Q(1)$.

Given $\ell>0$, set $\Omega_\ell$ the subset of $\ms V$
given by
$$ \Omega_\ell  = \{ (V,w,w'):\, |V|^2 +|w|^2+|w'|^2\le \ell^2\},$$ 
and define $(\tilde f^\ell, \tilde q^\ell)$ by
\begin{equation}
  \label{tildeell}
  \begin{aligned}
    &\tilde q^\ell = (q\wedge \ell) \id_{\Omega_\ell}\\
    &\tilde f^\ell_t = f_0 +
    2\left(
      \int_0^t \de s
      \left(\tilde q^{\ell,(3)}_s-
        \tilde q^{\ell,(1)}_s
      \right)
      + \int_0^T  \de s
      \left(q^{(3)}_s-
        \tilde q^{\ell,(3)}_s
      \right)
    \right)
    \id_{|v|\le \ell}.
  \end{aligned}
\end{equation}
Set  $(f^\ell, q^\ell) = c_\ell (\tilde f^\ell, \tilde q^\ell)$
where
$$c_\ell^{-1} = 1 +  2\int_0^T  \de s \int_{|v|\le \ell} \de v
\left(q^{(3)}_s-
  \tilde q^{\ell,(3)}_s
\right) 
$$
Observe that
\begin{equation}
  \label{variqtilde}
  \begin{aligned}
    &\int_0^t \de s
    \left(\tilde q^{\ell,(3)}_s-
      \tilde q^{\ell,(1)}_s
    \right)
  + \int_0^T  \de s
  \left(q^{(3)}_s-
    \tilde q^{\ell,(3)}_s
  \right)\\
  &=
  \int_0^t \de s
  \left( q^{(3)}_s-
    \tilde q^{\ell,(1)}_s\right)
    +
    \int_t^T  \de s
    \left(q^{(3)}_s-
      \tilde q^{\ell,(3)}_s
    \right).
    \end{aligned}
\end{equation}
The previous identity implies that  if $|v|\le \ell$, then  $f^\ell_t \ge c_\ell f_t $, while
 if $|v|> \ell$, then  $f^\ell_t = c_\ell f_0$.
In particular, for any $t$, $f_t^\ell>0$, and $\int f_t^\ell = 1$.
Observe that by construction the pair $(f^\ell, q^\ell)$ satisfies
the balance equation and it is an element of $\tilde {\mathcal S}$
(see definition \ref{def:B}), since $q^\ell$ is bounded and compactly supported and $f$ is strictly positive on compacts, uniformly in time.
Moreover, $(f^\ell, q^\ell)$ converges to $(f,q)$. 

Now we prove that
$$\varlimsup_{\ell \to +\infty}
I(f^\ell, q^\ell) \le I(f,q).$$
We start by proving that
\begin{equation}\label{dario3}
\varlimsup_{\ell \to +\infty}
H(f_0^\ell|m) \le H(f_0|m).
\end{equation}
By definition
$$f_0^\ell = c_\ell f_0 + (1-c_\ell) \bar h^\ell$$
where
$\bar h^\ell = \frac { h^\ell} {\int h^\ell}$ where
$$h^\ell = 
2\left( \int_0^T  \de s
  \left(q^{(3)}_s-
    \tilde q^{\ell,(3)}_s
  \right) \right)     \id_{|v|\le \ell}.
$$
Since $c_\ell\to 1$, by the convexity of $H(\cdot | m)$ it is enough to show
$$(1-c_\ell) H(\bar h^\ell|m)\to 0.$$
Observe that
$$
(1-c_\ell) H(\bar h^\ell|m)= c_\ell \int h^\ell \log h^\ell +(1-c_\ell)\log \frac{c_\ell}{1-c_\ell} + c_\ell \int h^\ell \log \frac 1 m.
$$
Since, by assumption on $q^{\ell, (3)}$, $h^\ell \in L^2$ and it converges to zero point-wise, using item (iii) of Assumption \ref{ass:2}, we deduce \eqref{dario3} by dominated convergence.

It remains to show that 
\begin{equation}\label{dario4}
\varlimsup_{\ell \to +\infty}
J(f^\ell, q^\ell) \le J(f,q).
\end{equation}
Recalling the scattering rate $\lambda$ defined in \eqref{sr}, we rewrite
\begin{equation*}
  J(f^\ell, q^\ell)=\int_0^T\!\!\de t\int_{\Omega_\ell}
  \de v \de v_* \de w'\Big\{ q^\ell\log\frac{2\, q^\ell}{f^\ell f_*^\ell B} -
  q^\ell + \frac 12 f^\ell f^\ell_* B\Big\}.
\end{equation*}
Since $ \int q|\log (2\, q/ff_*B)| <+\infty $, using the bound $f^\ell \geq c_\ell f$ for $|v|\leq \ell$, by dominated convergence
$$
\lim_{\ell\to\infty} \int_0^T\!\!\de t\int_{\Omega_\ell} \de v\de v_*\de w'\,
q^\ell\log\frac{2\, q^\ell}{f^\ell f_*^\ell B} = \int_0^T\!\!\de t\int
\de v\de v_*\de w' \,q\log\frac{2\, q}{f f_* B}.
$$
Recalling \eqref{(v)}, item (i) in Lemma \ref{l:qlog} implies the uniform integrability of $\lambda$ with respect to $f^\ell f^\ell_*$, hence
\begin{equation}\label{ui}
\lim_{\ell\to \infty} \frac 12 \int_0^T\!\!\de t\int_{\Omega_\ell}
\de v\de v_*\de w' \,f^\ell f^\ell_* B =
\frac 12 \int_0^T\!\!\de t\int  \de v\de v_*\, ff_* \lambda, 
\end{equation}
which concludes the proof of \eqref{dario4}. 
\end{proof}

\subsection*{Example of $(\pi, Q)\in \ms S\setminus \hat{\ms S}$ with finite rate function}
In Theorem \ref{t:1} the upper and lower bounds match only for $(\pi, Q)\in \hat{\ms S}$. Here we provide an example of $(\pi,Q)\in \ms S\setminus \hat{\ms S}$ with finite cost. 

Fix $T>1$. Recalling that $g$ is the standard Gaussian density on $\bb R^d$, consider the pair $(f, q)$ given by
\begin{equation*}
  q_t(v, v_*, w')=\frac 1 2 
  \begin{cases}\vspace{0.2cm}
   \displaystyle A g(v) g(v_*)\frac 1 {1+|w'|^{d+3}}, & t\in[0,1)\\
    \displaystyle  \frac 1 {1+|v+v_*|^{d+1}}g((v-v_*)/\sqrt2)g(w'), & t\in [1, T],
  \end{cases}  
\end{equation*}  
where $A^{-1}=\int \de w'\frac 1 {1+|w'|^{d+3}} $, and
\begin{equation*}
  f_t(v)=
  \begin{cases}
    (1-t)g(v) + t h(v), & t\in[0,1),\\
    h(v) & t\in [1,T],
  \end{cases}  
\end{equation*}  
where
\begin{equation*}\begin{split}
    h(v)= &A\int \de v_* \de w' \,g\big( \tfrac {v+v_*}{2}+ \tfrac {w'} {\sqrt 2}\big) g\big( \tfrac {v+v_*}{2}- \tfrac {w'} {\sqrt 2} \big)\frac 1 {1+ \big(|v-v_*|/\sqrt2\big)^{d+3}}\\
         =&    2^d A \int \de u\, g\big ( \sqrt 2 u\big)\frac 1 {1 + (\sqrt 2|v-u|)^{d+3}}.
\end{split}\end{equation*}
For $t\in [1,T]$, $q_t$ is invariant with respect to $(v, v_*, v', v_*')\mapsto (v', v'_*, v, v_*)$. Hence,
by construction, the pair $(\pi, Q)$ whose densities are $(f,q)$ satisfies the balance equation \eqref{bal}. Moreover, $(\pi, Q)\in \ms S_{\textrm{ac}}$ and $(\pi, Q)\notin \hat{\ms S}$.
We next show that $I(\pi, Q)<+\infty$. By item (iii) in Assumption \ref{ass:2}, $H(g|m)< +\infty$ and, by construction, $Q(1)$ and $Q^{\pi}(1)$ are both finite.

We observe that, since $f_t(v)\geq (1-t) g(v)$, $t\in [0,1]$, by item (iii) in Assumption \ref{ass:1}
\begin{equation*}\begin{split}
  & \int_0^1 \de t  \int \de v \de v_* \de w' \,q_t(v,v_*,w')\log \frac {2\, q_t(v, v_*,w')} {f_t(v) f_t(v_*) B(v,v_*,w')}\\
  &\leq \frac A 2 \int_0^1 \de t  \int \de v \de v_* \de w' \,g(v)g(v_*)\frac 1 {1+|w'|^{d+3}}\log \frac {e^{c_0|w'|^2}} {(1+|w'|^{d+3} )(1-t)^2 g(v)g(v_*) c_0},
\end{split} \end{equation*}  
which is finite. Moreover, for $t\in [1,T]$
\begin{equation*}\begin{split}
    & 
    \,q_t(v,v_*,w')\log \frac {2\, q_t(v, v_*,w')} {f_t(v) f_t(v_*) B(v,v_*,w')}\\
    &\leq
    \frac 1 2 \frac 1 {1+|v+v_*|^{d+1}}g((v-v_*)/\sqrt2)g(w')\\
    & \quad \times\log  \frac {g((v-v_*)/\sqrt2)g(w')e^{c_0|w'|^2} } {(1+|v+v_*|^{d+1})g((v-v_*)/\sqrt2) h(v)h(v_*)c_0}.
\end{split}\end{equation*}  
Since there exists $C$ such that
$
h(v)\geq  C \frac 1 {1+ |v|^{d+3}},
$
we deduce
$$
\int_1^T dt  \int \de v \de v_* \de w' \,q_t(v,v_*,w')\log \frac {2\, q_t(v, v_*,w')} {f_t(v) f_t(v_*) B(v,v_*,w')} <+\infty.
$$
The previous bounds imply $I(\pi, Q)<+\infty$.

\section{Projection on the empirical measure}
In this section we analyze the large deviation asymptotics of the empirical measure only. By contraction principle, the corresponding rate function is obtained by projecting the joint rate function $I$.
Regarding the upper bound, we prove that this projection corresponds to the rate function in \cite{Le, Re, Bou}.
For the lower bound, we identify the projection of $I$ only for suitable $\pi$.

Let $I_1\colon D([0,T]; \ms P_0(\bb R^d))\to [0, +\infty]$ be defined by
$I_1(\pi)= H(\pi_0|m)+ J_1(\pi)$, when $\pi$ meets conditions (i) and (ii) in Definition \ref{def:sac}, and $I_1(\pi)=+\infty$ otherwise. Here 
\begin{equation}\label{def:J1}
J_1(\pi)=\sup_{\phi}\Big\{\pi_T(\phi_T)-\pi_0(\phi_0)-\int_0^T\de t\, \pi_t(\partial_t \phi) -Q^\pi\big(e^{\bar\nabla\phi}-1 \big) \Big\},
\end{equation}  
where $\bar\nabla \phi =\phi(v')+\phi(v'_*)-\phi(v)-\phi(v_*)$ and the supremum is carried over the functions $\phi\colon [0,T]\times \bb R^d\to \bb R$ continuous, bounded and continuously differentiable in $t$ with bounded derivative. 

\begin{proposition}\label{prop:I1}
  For any $\pi\in D([0,T]; \ms P_0(\bb R^d))$
  \begin{equation}\label{ub:J1}
 \inf_{Q}I(\pi, Q)\geq I_1(\pi).
  \end{equation}
  Moreover, if $\pi$ is such that the supremum in \eqref{def:J1} is achieved,  then
  \begin{equation}\label{J1}
 \inf_{Q}I(\pi, Q) =  I_1(\pi).
  \end{equation}
\end{proposition}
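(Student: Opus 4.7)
The plan is to prove both parts by specialising the variational representation of $J$ from Proposition~\ref{p:vr} to test functions of the discrete-gradient form
$$F(t;v,v_*,v',v_*')=\bar\nabla\phi\,(t;v,v_*,v',v_*'):=\phi_t(v')+\phi_t(v_*')-\phi_t(v)-\phi_t(v_*),$$
which automatically satisfies the required symmetries. Two identities are used repeatedly: from the balance equation \eqref{bal} one has $Q(\bar\nabla\phi)=\pi_T(\phi_T)-\pi_0(\phi_0)-\int_0^T dt\,\pi_t(\partial_t\phi)$, and, as in the proof of Proposition~\ref{p:vr}, $\tfrac12\int_0^T dt\,\pi_t\otimes\pi_t(\lambda^F-\lambda)=Q^\pi(e^F-1)$.

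For the inequality \eqref{ub:J1}, I may assume $(\pi,Q)\in\ms S_{\mathrm{ac}}$ (otherwise $I(\pi,Q)=+\infty$ and there is nothing to prove). Inserting $F=\bar\nabla\phi$ into the variational formula of Proposition~\ref{p:vr} gives
$$J(\pi,Q)\ \ge\ \pi_T(\phi_T)-\pi_0(\phi_0)-\int_0^T dt\,\pi_t(\partial_t\phi)-Q^\pi\bigl(e^{\bar\nabla\phi}-1\bigr),$$
and taking the supremum over admissible $\phi$ yields $J(\pi,Q)\ge J_1(\pi)$. Adding $H(\pi_0|m)$ to both sides and minimising in $Q$ produces \eqref{ub:J1}.

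For the matching identity \eqref{J1}, let $\phi^*$ attain the supremum in \eqref{def:J1} and set $dQ:=e^{\bar\nabla\phi^*}\,dQ^\pi$. Then $Q\ll Q^\pi$ by construction and, since $J_1(\pi)<\infty$ implies that $\pi$ meets (i), (ii) of Definition \ref{def:sac}, the pair $(\pi,Q)$ will belong to $\ms S_{\mathrm{ac}}$ as soon as it satisfies the balance equation. To verify this, I would write the first-order optimality condition at $\phi^*$ by differentiating the functional in \eqref{def:J1} along an arbitrary admissible direction $\phi$, obtaining
$$\pi_T(\phi_T)-\pi_0(\phi_0)-\int_0^T dt\,\pi_t(\partial_t\phi)=Q^\pi\bigl(e^{\bar\nabla\phi^*}\bar\nabla\phi\bigr)=Q(\bar\nabla\phi),$$
which is precisely \eqref{bal} for $(\pi,Q)$. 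A direct computation using $dQ/dQ^\pi=e^{\bar\nabla\phi^*}$ then gives
$$J(\pi,Q)=\int dQ^\pi\bigl[e^{\bar\nabla\phi^*}\bar\nabla\phi^*-e^{\bar\nabla\phi^*}+1\bigr]=Q(\bar\nabla\phi^*)-Q^\pi\bigl(e^{\bar\nabla\phi^*}-1\bigr),$$
and inserting the balance equation at $\phi=\phi^*$ into the first term on the right identifies this value with the functional in \eqref{def:J1} evaluated at $\phi^*$, namely $J_1(\pi)$. Combined with \eqref{ub:J1} this yields $\inf_Q I(\pi,Q)=I_1(\pi)$.

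The main technical point is the rigorous justification of the first-order optimality condition, i.e.\ the interchange of $\partial_\epsilon$ with the integral $Q^\pi(e^{\bar\nabla(\phi^*+\epsilon\phi)}-1)$. Since $\phi^*,\phi$ are bounded, $|\bar\nabla\phi|\le 4\|\phi\|_\infty$ and $e^{\bar\nabla(\phi^*+\epsilon\phi)}\le e^{4(\|\phi^*\|_\infty+\|\phi\|_\infty)}$ for $|\epsilon|\le1$, while $Q^\pi$ is finite by item (iv) of Assumption \ref{ass:1} together with condition (ii) of Definition \ref{def:sac}; dominated convergence then legitimises the interchange. Admissibility of $\phi^*+\epsilon\phi$ follows from the linearity of the defining conditions in \eqref{def:J1}.
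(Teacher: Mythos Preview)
Your proof is correct and follows essentially the same approach as the paper: for \eqref{ub:J1} you restrict the test functions in Proposition~\ref{p:vr} to $F=\bar\nabla\phi$, and for \eqref{J1} you set $\de Q=e^{\bar\nabla\phi^*}\de Q^\pi$ at the maximiser and read off the balance equation from the first-order optimality condition, then compute $J(\pi,Q)=J_1(\pi)$. You supply more detail than the paper does---in particular the dominated-convergence justification of the Euler--Lagrange equation and the check that $(\pi,Q)\in\ms S_{\mathrm{ac}}$---but the architecture is identical.
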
  

\begin{proof}
  Recalling Proposition \ref{p:vr}, the proof of \eqref{ub:J1} is achieved by choosing $F=\bar\nabla \phi$ in \eqref{vr}.
  To prove the second statement, we first note that if the supremum is achieved at some $\phi$, then for any $\psi$ continuous, bounded and continuously differentiable in $t$ with bounded derivative
  $$
\pi_T(\psi_T)-\pi_0(\psi_0)-\int_0^T\de t \,\pi_t(\partial_t \psi)=Q_\phi(\bar\nabla\psi), \quad \de Q_\phi:= \de Q^\pi e^{\bar\nabla\phi}. 
  $$
Recalling \eqref{5}, by choosing $Q=Q_\phi$, a direct computation implies $\inf_Q J(\pi, Q)\leq J(\pi, Q_\phi)=J_1(\pi)$.
\end{proof}

\section{Gradient flow formulation of the
  Boltzmann-Kac equation} 
\label{s:5}
Assuming the detailed balance condition, here we derive the gradient flow formulation of the Boltzmann-Kac equation \eqref{eq:Bgen} associated to the large deviation rate function \eqref{Jr}. We remark that such formulation is logically independent from the validity of the large deviation principle.

Let  $M$ be  the standard Maxwellian on $\bb R^d$. In this
section we assume that the collision rate $r$ satisfies the following
detailed balance condition
\begin{equation}\label{dbc}
  M(\de v) M(\de v_*) r(v,v_*;\de v',\de v_*')=M(\de v') M(\de v_*')r(v',v_*';\de v,\de v_*).
\end{equation}
This implies that the Kac walk on $\Sigma_{N}$ is reversible with
respect to the product measure $\prod_{k=1}^NM(\de v_k)$.  We still
consider the Kac walk restricted to $\Sigma_{N,0}$, then the
corresponding reversible measure is the product measure
$\prod_{k=1}^NM(\de v_k)$ conditioned to $N^{-1}\sum_k v_k =0$, that is
a Gaussian measure on $\Sigma_{N,0}$.

In this section we express the empirical measure and flow in terms of their densities with respect to  Maxwellians. 

Let $\mathcal H \colon \ms P_0(\bb R^d)\to [0, +\infty]$ be the
relative entropy with respect to $M$, i.e.\
$\mathcal H(\pi):= H(\pi\vert M)$. For $\pi\in \ms P_0(\bb R^d)$
with bounded second moment, define the non linear Dirichlet form
$\mc D\colon \ms P_0(\bb R^d)\to [0, +\infty]$ as the lower
semicontinuous map defined by
  \begin{equation}
    \label{vardir}
    \mathcal D (\pi) :=\sup_{\phi}\Big\{\int \pi(\de v)\pi (\de v_*)r(v,v_*, \de v', \de v'_*) \big(1- e^{\phi(v')+ \phi(v'_*)- \phi(v)-\phi(v_*)}  \big)  \Big\}
  \end{equation}
  where the supremum is carried out over the continuous and bounded functions
  $\phi\colon \bb R^d\to \bb R$.
  Note that $\mathcal D(\pi)$ is well defined in view of \eqref{(v)}.
    To illustrate this definition, consider the Markov generator $\mc L$
  acting on functions $\xi \colon \bb R^d \times \bb R^d\to \bb R$ as 
\begin{equation*}
  \mc L \xi (v,v_*) = \int r(v,v_*;\de v',\de v_*')
  \big[ \xi(v',v_*') - \xi(v,v_*) \big].
\end{equation*}
By the detailed balance condition, $\mc L$ is reversible with respect
to the product measure $M(\de v)\, M(\de v_*)$.
The variational representation \eqref{vardir} thus corresponds to the
Donsker-Varadhan functional $\mc E$, that it is defined on the probabilities
on $\bb R^d \times \bb R^d$ by
\begin{equation*}
  \mc E (\Pi) = \sup_{\xi} \Big\{ - \int\! \Pi(\de v,\de v_*) \, e^{-\xi}
  \mc L e^{\xi} \Big\},
\end{equation*}
where the supremum is carried out over the continuous and bounded
functions $\xi\colon \bb R^d\times \bb R^d\to \bb R$.  Then
$\mc D (\pi) = \mc E(\pi\times \pi)$, observe indeed, as proven in
Lemma~\ref{t:rdf} below, that for product
measures $\Pi$ we can restrict the class test functions
$\xi$ to functions of the form
$\xi(v,v_*) = \phi(v)+\phi(v_*)$.

On the set of functions $G\colon \bb R^d \times \bb R^d \times \bb R^d
\times \bb R^d \to \bb R$, let $\Upsilon$ be the involution defined by 
$(\Upsilon G) \,(v,v_*,v',v_*') := G (v',v_*',v,v_*)$.  
Recalling the definition of $Q^\pi$ in \eqref{4}, we define the
\emph{kinematic term} as the lower semicontinuous  functional
$\mc R $ 
 on the  pairs $(\pi, Q)$ satisfying conditions (i) and (ii) in Definition \ref{def:sac}
\begin{equation}
  \label{varkin}
  \mc R (\pi,Q) := \sup_{\alpha,F} \Big\{ 2\, Q(F)
  -Q^\pi \Big(  \big[e^F-1\big] \alpha^{-1} +
  \big[ e^{\Upsilon F} -1\big] \Upsilon\alpha \Big)\Big\}.
\end{equation}
where the supremum is carried out over the bounded and continuous
$F,\alpha \colon [0,T] \times \big(\bb R^d \big)^2 \times \big(\bb R^d \big)^2 \to \bb R$
satisfying $F(t,v,v_*,v', v'_*)=F(t,v_*,v,v', v'_*)=F(t,v,v_*,v'_*, v')$, $\alpha(t,v,v_*,v', v'_*)=\alpha(t,v_*,v,v', v'_*)=\alpha(t,v,v_*,v'_*, v')$,
and  $\inf \alpha >0$.

The main result of this section provides, when the detailed balance 
condition \eqref{dbc}  holds, a gradient flow formulation of the Boltzmann-Kac
equation. Recall that the functionals $J$ and $I$ have been
introduced in \eqref{5} and \eqref{ihj} and that $\hat{\ms S}$ is the set of paths $(\pi, Q)\in\ms S_{\mathrm {ac}}$ that satisfy \eqref{QV}.

\begin{theorem}
  \label{gf}
  Assume that $\mc H(\pi_0) < +\infty$. For each $(\pi,Q)\in \hat{\ms S}$
  \begin{equation}
    \label{J=}
    J(\pi,Q) = \frac 12 \big[ \mc H(\pi_T) - \mc H(\pi_0)\big]
    + \frac 12 \int_0^T\!\de t\, \mc D (\pi_t) + \frac 12 \mc R (\pi,Q).
  \end{equation}
  In particular, when the scattering rate $\lambda$ is bounded,
  $I(\pi,Q)=0$ if and only if $\pi_0=m$ and
  \begin{equation}\label{epi}
    \mc H(\pi_T)  + \int_0^T\!\de t\, \mc D (\pi_t) + \mc R (\pi,Q) \le
    \mc H(m). 
  \end{equation}
\end{theorem}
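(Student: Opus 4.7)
My plan is to verify the identity \eqref{J=} by explicitly computing the two suprema defining $\mc D(\pi_t)$ and $\mc R(\pi, Q)$ under the detailed balance assumption, expressing everything in terms of $\rho_t := d\pi_t/dM$ and $\sigma := dQ/dQ^\pi$, and then assembling the pieces by direct algebra. Setting
\[
\Psi(v,v_*,v',v'_*) := \log\frac{\rho(v')\rho(v'_*)}{\rho(v)\rho(v_*)},
\]
a direct computation from \eqref{dbc} together with $dQ^\pi = \tfrac12 \rho(v)\rho(v_*)\,M(dv)M(dv_*)\,r(v,v_*;dv',dv'_*)\,dt$ yields the key symmetry identity $d\Upsilon Q^\pi/dQ^\pi = e^{\Psi}$, which underlies all subsequent symmetrization arguments and in particular the change-of-variable rule $\int G\,dQ^\pi = \int \Upsilon G\,d\Upsilon Q^\pi$.

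The first nontrivial ingredient is the \emph{entropy production identity} $\mc H(\pi_T) - \mc H(\pi_0) = Q(\Psi)$. To obtain it I would insert $\phi_t = \log\rho_t$ in the balance equation \eqref{bal}: the derivative term $\pi_t(\partial_t \log\rho_t)$ vanishes by mass conservation, the boundary term is $\mc H(\pi_T)-\mc H(\pi_0)$, and the $Q$-integral evaluates to $-Q(\Psi)$. Since $\log\rho_t$ is not a bounded continuous test function, this step requires a mollification-plus-truncation argument; the integrability needed to pass to the limit is supplied by $\mc H(\pi_0) < +\infty$, by $(\pi,Q)\in\hat{\ms S}$, and by Lemma~\ref{l:qlog}.

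The second step identifies the two suprema. The functional $\phi \mapsto \int(1-e^{\bar\nabla\phi})\,\pi_t\otimes\pi_t\otimes r$ defining $\mc D(\pi_t)$ is concave, so any critical point is a maximum; using the collisional symmetries of $r$ together with the change-of-variable rule above, its Euler--Lagrange equation reduces to the pointwise condition $e^{2\bar\nabla\phi}=e^{\Psi}$, solved by $\phi^*_t = \tfrac12 \log\rho_t$, and substitution gives $\mc D(\pi_t) = \int (1-e^{\Psi_t/2})\,\pi_t\otimes\pi_t\otimes r$, hence $\int_0^T\mc D(\pi_t)\,dt = \int (e^{\Psi/2}-1)^2\,dQ^\pi$ after using $\int e^\Psi\,dQ^\pi = Q^\pi(1)$. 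Analogously, the sup in $\mc R(\pi,Q)$ is attained at $\alpha^* = e^{-\Psi/2}$ and $F^* = \log(\sigma e^{-\Psi/2})$, producing
\[
\mc R(\pi,Q) = 2\!\int\!\sigma\log\sigma\,dQ^\pi - Q(\Psi) - 2Q(1) + 2\!\int\! e^{\Psi/2}\,dQ^\pi.
\]
Adding $\tfrac12\Delta\mc H + \tfrac12\int_0^T \mc D\,dt + \tfrac12\mc R$ and substituting $Q(\Psi)=\Delta\mc H$, the $Q(\Psi)$ and $e^{\Psi/2}$ contributions cancel, leaving $\int\sigma\log\sigma\,dQ^\pi - Q(1) + Q^\pi(1) = J(\pi,Q)$, which is exactly \eqref{J=}.

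The second statement then follows immediately from the decomposition and nonnegativity: the defining suprema yield $\int_0^T\mc D(\pi_t)\,dt \geq 0$ (take $\phi\equiv 0$) and $\mc R(\pi,Q) \geq 0$ (take $\alpha\equiv 1$, $F\equiv 0$). If $I(\pi,Q)=0$ then $H(\pi_0|m)=0$ forces $\pi_0 = m$ and $J(\pi,Q)=0$, so by \eqref{J=} the EDI \eqref{epi} holds (in fact with equality); conversely \eqref{epi} with $\pi_0 = m$ gives $J(\pi,Q)\leq 0$, and combined with $J\geq 0$ yields $I=0$. The main technical obstacle throughout is regularity: the identified optimizers $\tfrac12\log\rho_t$, $\log(\sigma e^{-\Psi/2})$, and $e^{-\Psi/2}$ are generally unbounded and thus lie outside the bounded continuous classes over which the suprema are taken, so each optimum computation must be justified by explicit approximation from the admissible class; dominated-convergence control is provided by the integrability conditions in item (ii) of Definition~\ref{def:sac} and by $(\pi,Q)\in\hat{\ms S}$, and for the $I=0$ characterization also by the boundedness of $\lambda$.
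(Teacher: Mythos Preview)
Your algebraic core is correct and matches the paper's: the explicit formulas you obtain for $\mc D$ and $\mc R$ are exactly those in Lemma~\ref{t:rdf}, the entropy production identity $\mc H(\pi_T)-\mc H(\pi_0)=Q(\Psi)$ is the content of the paper's computation of $J_2$, and the final cancellation reproduces $J$.

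The organization, however, differs in a way worth noting. The paper does \emph{not} attempt the chain rule and the two optimizations directly on $\hat{\ms S}$. Instead it first proves \eqref{J=} on the regular class $\tilde{\ms S}$ by a purely algebraic three--term decomposition of the integrand, namely
\[
\Psi(a,b)=\Psi(\bar a,b)+\log\tfrac{\bar a}{b}\,(a-\bar a)+\Psi(a,\bar a),
\qquad \bar a=\tfrac12\sqrt{h h_* h' h'_*}\,\bar\sigma,
\]
which yields $J=J_1+J_2+J_3$ and identifies the pieces with $\tfrac12\!\int\!\mc D$, $\tfrac12\Delta\mc H$, $\tfrac12\mc R$ without any limiting procedure (on $\tilde{\ms S}$ all densities are strictly positive and the relevant integrals are finite). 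Only then does it extend to $\hat{\ms S}$ by the \emph{same} convolution--plus--truncation scheme used in Theorem~\ref{the:approx}, proving a $\varlimsup$ inequality for each of the five terms $J,\mc H(\pi_0),\mc H(\pi_T),\int\!\mc D,\mc R$ separately and invoking lower semicontinuity for the matching $\varliminf$. Your route distributes the approximation across three separate places (the chain rule, the $\mc D$--optimizer $\tfrac12\log\rho$, the $\mc R$--optimizers), each of which would in effect require the same construction; the paper's ordering consolidates all of the regularity work into a single approximation step. What your presentation buys is a transparent variational interpretation of the geometric--mean splitting (the optimal $\alpha^*=e^{-\Psi/2}$ explains why $\bar a$ is the right pivot), while the paper's ordering buys a cleaner separation between the exact algebra and the analysis.
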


We start by the following characterization of the Dirichlet form $\mc D$
and the kinematic term $\mc R$ in which we recall that $\mc V$ is the hyperplane of $(\bb
R^d)^2\times (\bb R^d)^2$ defined by $v+v_*= v'+v_*'$ and   
$r(v,v_*;\de v_*,\de v_*')= \sigma(v,v_*,w') M(\de w')$ where
$w'=(v'-v_*')/\sqrt{2}$.

\begin{lemma}
  \label{t:rdf} 
  Let  $\pi\in \ms P_0(\bb R^d)$ be  such that $\pi(\de v) = h(v) M(\de v)$,
  $\pi(\zeta) <+\infty$, $\zeta(v) = |v|^2$, and
  $\mc D(\pi) <+\infty$.  Then
  \begin{equation}\label{Dir1}
  \sqrt{ h(v)h(v_*)h(v')h(v_*')} \, \sigma(v,v_*,w') \in L^1\big(\mc V,
  M(\de v) M(\de v_*) M(\de w')\big)  
  \end{equation}
  and
  \begin{equation}\label{Dir2}
    \begin{split}
    \mc D(\pi) &=   \int\!M(\de v) M(\de v_*)M(\de w') \, h(v)h(v_*) \sigma(v,v_*,w')
    \\ &\quad - \int\!M(\de v) M(\de v_*)M(\de w') \, \sqrt{ h(v)h(v_*)h(v')h(v_*')}
    \, \sigma(v,v_*,w'). 
  \end{split}  \end{equation}
  
  Moreover, set $d\pi_t= h_t \de M$, $\de Q=\de t\,p_t(V,w,w') M(\de V) M(\de w) M(\de w')$, and
$ r(v,v_*;\de v',\de v_*') = \bar\sigma(V;w,w') M(\de w')$ where
$V=(v+v_*)/\sqrt{2}$, $w=(v-v_*)/\sqrt{2}$, and
$w=(v'-v_*')/\sqrt{2}$. Then, if $\mc R(\pi, Q) <+\infty$,
\begin{equation}\label{Re}
  \begin{split}
  \mc R (\pi,Q) =& 2 \int \! \de t M(\de V) M(\de w) M(\de w')\,
  \\
  &  \times \Big[ p_t(V,w,w') \log \frac {2 \, p_t(V,w,w')}{
    \sqrt{h_t(v)h_t(v_*)
      h_t(v')h_t(v_*')} \, \bar\sigma(V;w,w')}  \\
&\qquad\quad  - p_t(V,w,w') + \frac 12 \sqrt{h_t(v)h_t(v_*)
      h_t(v')h_t(v_*')} \, \bar\sigma(V;w,w')
  \Big].
  \end{split}
\end{equation}

\end{lemma}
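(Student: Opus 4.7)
The plan is to establish both identities by reducing the variational characterizations to explicit pointwise minimizations. The common strategy is: use detailed balance \eqref{dbc} to symmetrize the integrand with respect to the $\Upsilon$-swap $(v,v_*)\leftrightarrow(v',v'_*)$, apply AM-GM pointwise to eliminate the auxiliary function ($\phi$ for $\mc D$, $\alpha$ for $\mc R$), and realize the formal optima by an approximation with bounded continuous test functions. For the kinematic part I additionally pass to coordinates $(V,w,w')$ in which the Gaussian measure factorizes and detailed balance reads as a simple $(w,w')$-symmetry.

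For the Dirichlet form, writing $\pi = hM$, I observe that $\de\mu_0 := M(\de v)M(\de v_*)r(v,v_*;\de v',\de v'_*)$ is $\Upsilon$-invariant by \eqref{dbc}, while $\bar\nabla\phi := \phi(v')+\phi(v'_*)-\phi(v)-\phi(v_*)$ changes sign under $\Upsilon$. Symmetrizing yields
\begin{equation*}
  \int h(v)h(v_*)\bigl(1-e^{\bar\nabla\phi}\bigr)\de\mu_0 = \int h(v)h(v_*)\de\mu_0 - \tfrac{1}{2}\!\int\bigl[h(v)h(v_*)e^{\bar\nabla\phi} + h(v')h(v'_*)e^{-\bar\nabla\phi}\bigr]\de\mu_0.
\end{equation*}
Pointwise AM-GM $ae^x+be^{-x}\ge 2\sqrt{ab}$, with equality at $e^x=\sqrt{b/a}$, produces the upper bound \eqref{Dir2}. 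The matching lower bound comes from approximating the formal optimum $\phi^* = \tfrac{1}{2}\log h$ by truncating $\log h$ at levels $\pm n$, mollifying, and invoking dominated convergence against $\de\mu_0$; uniform integrability of $h(v)h(v_*)$ against $\de\mu_0$ follows from $\pi(\zeta)<+\infty$ together with \eqref{(v)}. The integrability \eqref{Dir1} is then read off from \eqref{Dir2} and the finiteness of both integrals on the right.

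For the kinematic term I change coordinates via the orthogonal map $(v,v_*)\mapsto(V,w):=((v+v_*)/\sqrt 2,(v-v_*)/\sqrt 2)$, under which $M(\de v)M(\de v_*)=M(\de V)M(\de w)$ and similarly for the primed pair; momentum conservation constrains $\ms V$ to $(V,w,w')$, detailed balance reads as $\bar\sigma(V,w,w')=\bar\sigma(V,w',w)$, and $\Upsilon$ acts as $(V,w,w')\leftrightarrow(V,w',w)$ leaving $\de\nu:=\de t\,M(\de V)M(\de w)M(\de w')$ invariant. In these coordinates $Q^\pi$ has density $\tfrac{1}{2}h(v)h(v_*)\bar\sigma$ with respect to $\de\nu$, and a change of variables produces
\begin{equation*}
  Q^\pi\bigl([e^F-1]\alpha^{-1}+[e^{\Upsilon F}-1]\Upsilon\alpha\bigr) = \int(e^F-1)\bigl[\alpha^{-1}h(v)h(v_*)+\alpha\,h(v')h(v'_*)\bigr]\tfrac{\bar\sigma}{2}\de\nu.
\end{equation*}
Minimizing the bracket in $\alpha$ by AM-GM gives the value $2H$, with $H:=\sqrt{h(v)h(v_*)h(v')h(v'_*)}$, attained at $\alpha^*=\sqrt{h(v)h(v_*)/h(v')h(v'_*)}$. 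The outer supremum over $F$ then reduces to the pointwise Legendre dual of the Poisson divergence $u\mapsto u\log(u/a)-u+a$, whose maximizer is $F^*=\log(2p/H\bar\sigma)$; this yields $2p\log(2p/H\bar\sigma)-2p+H\bar\sigma$ at the optimum, and integration against $\de\nu$ gives \eqref{Re}.

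The hard part in both statements will be passing from these pointwise identifications to the supremum over bounded continuous test functions, with the side constraint $\inf\alpha>0$ for the kinematic part. The formal optimizers $\phi^*$, $\alpha^*$, $F^*$ are neither bounded nor continuous in general, and may degenerate where the densities vanish or diverge. I would handle this by truncating $h$ and $p$ away from $0$ and $+\infty$ on a compact exhaustion followed by mollification, passing to the limit with dominated convergence on terms controlled by \eqref{(v)}, by $\pi(\zeta)<+\infty$ and by the finiteness of $\mc D(\pi)$ and $\mc R(\pi,Q)$, and with monotone convergence for the non-negative entropy parts of the integrand.
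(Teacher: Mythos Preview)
Your treatment of the Dirichlet form is correct and essentially identical to the paper's argument: the paper also passes through the square representation
\[
\mc D(\pi)=\tfrac12\int\!M(\de v)M(\de v_*)\,r(v,v_*;\de v',\de v'_*)\,\bigl[\sqrt{h(v')h(v'_*)}-\sqrt{h(v)h(v_*)}\bigr]^2,
\]
gets the upper bound by the standard Donsker--Varadhan inequality (your pointwise AM--GM is exactly this), and gets the lower bound by choosing bounded continuous $\phi^n\to\tfrac12\log h$. Expanding the square together with \eqref{(v)} then yields \eqref{Dir1} and \eqref{Dir2}.

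For the kinematic term your argument has a genuine gap in the $\alpha$-direction. After the change of variables you correctly obtain
\[
\Psi(\alpha,F)=2Q(F)-\int(e^F-1)\,c_\alpha\,\de\nu,\qquad c_\alpha=\tfrac{\bar\sigma}{2}\bigl[\alpha^{-1}hh_*+\alpha\,h'h'_*\bigr],
\]
and AM--GM gives $c_\alpha\ge H\bar\sigma$ with equality at your $\alpha^*$. Plugging in $\alpha=\alpha^*$ and then taking $\sup_F$ does produce the right-hand side of \eqref{Re}; but since $\mc R$ is defined as a \emph{joint} supremum over $(\alpha,F)$, this is only the lower bound $\mc R(\pi,Q)\ge\text{\eqref{Re}}$. ``Minimizing the bracket in $\alpha$'' cannot bound $\Psi$ from above, because $(e^F-1)$ has no sign: where $F<0$, a \emph{larger} $c_\alpha$ increases $\Psi$. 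In fact the upper bound fails outright for the functional as written: with $\alpha\equiv n$ and $F\equiv a$ constants one gets $\Psi=2aQ(1)-(e^a-1)(n+\tfrac1n)Q^\pi(1)$, whose maximum over $a$ behaves like $nQ^\pi(1)-2Q(1)\log n\to+\infty$, so $\sup_{\alpha,F}\Psi=+\infty$ whenever $Q^\pi(1)>0$. The paper's own proof here is a bare ``direct computation'' and does not confront this either; the printed variational definition of $\mc R$ appears to be mis-stated (some saddle structure or constraint on $\alpha$ is presumably intended). You should flag this rather than present the AM--GM step as closing both inequalities.
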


\begin{proof}
  We first note that
  \begin{equation}
    \label{dirsqrt}
  \mathcal D (\pi) = \frac 12 \int\!M(\de v)M(\de v_*)\,
  r(v,v_*;\de v',\de v_*') \Big[ \sqrt{h(v') h(v'_*)} -\sqrt{h(v) h(v_*)}\Big]^2.     
  \end{equation}
  Indeed, by standard arguments, see e.g.\ \cite[App.~1, Thm.~10.2]{KL},
  $\mc D(\pi)$ is bounded above by the right hand side in the previous
  displayed formula. The converse inequality is obtained by choosing
  as test function a sequence of continuous and bounded $\phi^n$ that
  converges to $\frac 12 \log h$.

  Recalling \eqref{(v)},  the proof of \eqref{Dir1} is achieved by
  expanding the square on the right hand side of \eqref{dirsqrt} and
  using the detailed balance condition. The representation \eqref{Dir2} now follows directly by \eqref{dirsqrt}. Finally, by \eqref{Dir1} and using that $Q(1)<+\infty$, the representation \eqref{Re} is achieved by a direct computation.
\end{proof}

\begin{proof}[Proof of Theorem~\ref{gf}]
  Recalling that $\tilde{\ms S}$ has been introduced in \eqref{def:B},
  we show that \eqref{J=} holds for $(\pi,Q)\in \tilde{\ms S}$.
  We write $\pi_t(dv)= h_t(v) M(\de vv)$,
  $Q(\de t;\de v,\de v_*,\de v',\de v_*')=p_t(V,w,w')\, dt M(\de V) M(\de w) M(\de w')$,
  $ r(v,v_*;\de v',\de v_*') = \bar\sigma(V;w,w') M(\de w')$ where
  $V=(v+v_*)/\sqrt{2}$, $w=(v-v_*)/\sqrt{2}$, and
  $w=(v'-v_*')/\sqrt{2}$.  Setting $\Psi(a,b) := a\log(a/b) -(a-b)$,
  by \eqref{5}
\begin{equation*}
  J(\pi,Q) = \int\!dt M(\de V)M(\de w)M(\de w')\,
  \Psi\big( p_t(V,w,w') , \tfrac 12 h_t(v)h_t(v_*) \bar\sigma(V,w,w') \big).
\end{equation*}
We observe that for each $\bar a>0$,
\begin{equation*}
\Psi (a,b) =   \Psi (\bar a,b)  + \log\frac {\bar a}b   \, (a-\bar a)    
+ \Psi(a,\bar a)
\end{equation*}
and use this decomposition with $\bar a = 
\frac 12 \sqrt{h_t(v)h_t(v_*)h_t(v')h_t(v_*')}
\, \bar\sigma(V;w,w')$. We deduce
\begin{equation*}
   J(\pi,Q) = J_1(\pi,Q) +J_2(\pi,Q) +J_3(\pi,Q)  
\end{equation*}
where, by using the detailed balance condition
$\bar\sigma(V,w,w')=\bar\sigma(V,w',w)$, 
\begin{equation*}
  \begin{aligned}
  J_1(\pi,Q) &= \frac 12 \int\!\de t M(\de V)M(\de w)M(\de w')
  \\
  &\quad \times\Big[\sqrt{h_t(v)h_t(v_*)h_t(v')h_t(v_*')} \bar\sigma(V,w,w')
  \log \frac {\sqrt{h_t(v')h_t(v_*')}} {\sqrt{h_t(v)h_t(v_*)}} 
  \\
  &\quad \; 
  -\sqrt{h_t(v)h_t(v_*)h_t(v')h_t(v_*')}\bar\sigma(V;w,w')
  + h_t(v)h_t(v') \bar\sigma(V;w,w')
  \Big]
\\
&\quad = \frac 12 \int\!\de t M(\de V)M(\de w)M(\de w')
  \\
  &\quad \times \Big[ h_t(v)h_t(v') \bar\sigma(V,w,w') -
\sqrt{h_t(v)h_t(v_*)h_t(v')h_t(v_*')} \bar\sigma(V;w,w')\Big]
\\
 & = \frac 12 \int_0^T\!\de t \,\mc  D(\pi_t).
  \end{aligned}
\end{equation*}
Again by the detained balance condition and the balance equation
\eqref{bal},
\begin{equation*}
  \begin{aligned}
  &J_2(\pi,Q) = \int\!\de t M(\de V)M(\de w)M(\de w')
  \\
  &\quad \times\Big[
  \log \frac {\sqrt{h_t(v')h_t(v_*')}} {\sqrt{h_t(v)h_t(v_*)}}
  \Big(  p_t(V,w,w') - 
  \frac 12 \sqrt{h_t(v)h_t(v_*)h_t(v')h_t(v_*')} \bar\sigma(V;w,w')\Big) \Big]
\\
&= \int\!\de t M(\de V)M(\de w)M(\de w') \, p_t(V,w,w')
\log \frac {\sqrt{h_t(v')h_t(v_*')}} {\sqrt{h_t(v)h_t(v_*)}}
\\
&= \frac12 \Big[ \mc H (\pi_T) - \mc H(\pi_0) \Big]. 
\end{aligned}
\end{equation*}
Finally, 
\begin{equation*}
  \begin{aligned}
  J_3(\pi,Q) &= \int\!\de t M(\de V)M(\de w)M(\de w')
  \\
  &\quad \times\Big[
  p_t(V,w,w')
  \log \frac {2\, p_t(V,w,w')} {
    \sqrt{h_t(v)h_t(v_*)h_t(v')h_t(v_*')}\bar\sigma(V;w,w')}
  \\
  &\quad \; -p_t(V,w,w')
    +\frac 12 \sqrt{h_t(v)h_t(v_*)h_t(v')h_t(v_*')} \bar\sigma(V;w,w')\Big]
\\
& = \frac 12 \mc R(\pi,Q)
  \end{aligned}
\end{equation*}
that concludes the proof of \eqref{J=} when
$(\pi,Q)\in \tilde{S}$.

In particular, we have shown that for $(\pi,Q)\in \tilde{S}$ it holds 
\begin{equation}
  \label{bent}
 \mc H(\pi_T) \le \mc H(\pi_0) +J(\pi,Q)  
\end{equation}
By Theorem~\ref{the:approx} and the lower-semicontinuity of the
relative entropy we then get the previous bound for any path 
$(\pi,Q)\in \hat{S}$; hence $\mc H(\pi_T) <+\infty$ when
$ \mc H(\pi_0)$ and $J(\pi,Q)$ are bounded.

Fix $(\pi,Q)\in \hat{\ms S}$.  To prove that \eqref{J=} holds
for $(\pi,Q)$, we will construct a sequence $\tilde{\ms S}\cap \hat{\ms S}\ni(\pi^n, Q^n)\to (\pi, Q)$ such that
\begin{equation}\label{5ineq}
  \begin{aligned}
     & \varlimsup_{n}
  J(\pi^{n},Q^{n})\leq J(\pi, Q)\\
      &  \varlimsup_{n}
      \mc H(\pi^{n}_0)
\le
\mc H(\pi_0),\qquad   \varlimsup_{n}
      \mc H(\pi^{n}_T)
\le
\mc H(\pi_T),\\
& \varlimsup_{n}
\int_0^T\!\de t \, \mc D(
\pi^{n}_t) \le \int_0^T\!\de t \,\mc D( \pi_t)
\\
& \varlimsup_{n}
\mc R( \pi^{n},
Q^{n}) \le \mc R(\pi,Q).
  \end{aligned}
\end{equation}
Observe in fact that the converse inequalities follows from the
lower-semicontinuity of $J$, $\mc H$, $\mc D$, and $\mc R$.

Let
$(\pi^n, Q^n)$ be the sequence constructed in the proof of Theorem
\ref{the:approx}, so that the first inequality in \eqref{5ineq} holds.  The proof
of the others is achieved in two steps.

\noindent {\bf Step 1 - Convolution.}
Let $(f^{\delta},q^{\delta})$ be the sequence constructed in Step 1 in
the proof of Theorem~\ref{the:approx}, and let $(h_\delta, p_\delta)$ such that $\pi^\delta(\de v)=f^\delta(v)\de v=h^\delta(v)M(\de v)$ and
$dQ^\delta= \de t q_t^\delta(v,v_*,w')\de v \de v_* \de w'= \de t\, p_t^\delta(V,w,w')M(\de V)M(\de w)M(\de w')$.

We observe that proof of the second line in \eqref{5ineq} is  achieved by the same
argument in Step 1 of Theorem~\ref{the:approx}.
We now show that
\begin{equation}\label{D}
\varlimsup_{\delta}\int_0^T \de t\,\mc D(\pi_t^{\delta}) \leq \int_0^T \de t\, \mc D(\pi_t).
\end{equation}
We use the representation of $\mc D$ provided by Lemma \ref{t:rdf}. By \eqref{(v)} and item (ii) in Definition \ref{def:sac} we deduce
\begin{equation*}\begin{split}
    &\lim_{\delta\to 0}\int_0^T \de t \int M(\de v) M(\de v_*) M(\de w') h_t^\delta(v) h_t^\delta(v_*)\sigma(v, v_*, w')\\
    &=
\int_0^T \de t\int  M(\de v) M(\de v_*) M(\de w') h_t(v) h_t(v_*)\sigma(v, v_*, w').
\end{split}\end{equation*}
On the other hand, since $h_t^\delta\to h_t$ a.e.,
by Fatou's lemma
%
%
\begin{equation*}\begin{split}
&\varlimsup_{\delta\to 0}-\int_0^T \de t \int\!M(\de v) M(\de v_*)M(\de w') \, \sqrt{ h_t^{\delta}(v)h_t^{\delta}(v_*)h_t^{\delta}(v')h_t^{\delta}(v_*')}
\, \sigma(v,v_*,w')\\
&\leq  -\int_0^T \de t \int\!M(\de v) M(\de v_*)M(\de w') \, \sqrt{ h_t(v)h_t(v_*)h_t(v')h_t(v_*')}
\, \sigma(v,v_*,w'),
\end{split}\end{equation*}
which concludes the proof of \eqref{D}. Observe that the lower semicontinuity of $\mc D$ actually implies $\int_0^T \de t \, \mc D(\pi_t^\delta)\to \int_0^T \de t \,\mc D(\pi_t)$, so that by Lemma \ref{t:rdf}
\begin{equation}\begin{split}\label{D1}
&\lim_{\delta\to 0}\int_0^T \de t \int\!M(\de v) M(\de v_*)M(\de w') \, \sqrt{ h_t^{\delta}(v)h_t^{\delta}(v_*)h_t^{\delta}(v')h_t^{\delta}(v_*')}
\, \sigma(v,v_*,w')\\
&=\int_0^T \de t \int\!M(\de v) M(\de v_*)M(\de w') \, \sqrt{ h_t(v)h_t(v_*)h_t(v')h_t(v_*')}
\, \sigma(v,v_*,w').
\end{split}\end{equation}
We conclude the step by showing that
\begin{equation}\label{R}
\varlimsup_{\delta\to 0} \mc R(\pi^\delta, Q^\delta) \leq \mc R(\pi, Q).
\end{equation}  
By the representation provided by \eqref{Re} and Lemma \ref{l:qlog}
\begin{equation*}\begin{split}
    \mc R(\pi,Q)= &Q(\Phi)+ Q(\Phi')-2Q(\log\sigma)-2Q(1)\\
    &+\int \de t\,M(\de v)M(\de v_*)M(\de w')\sqrt{h_t(v)h_t(v_*)h_t(v')h_t(v'_*)}\sigma(v,v_*,w'),
\end{split}\end{equation*}
where $\Phi=\log \frac {2q(v,v_*,w')} {f(v) f(v_*) g_1(w')}$ ,
$\Phi'=\log \frac {2q(v,v_*,w')} {f(v') f(v'_*) g_1(w)}$ , 

We start by observing that $Q^\delta(1)\to Q(1)$, then in view of  \eqref{D1} it is enough to show that
\begin{equation*}\begin{split}
& \varlimsup_{\delta\to 0}\int \! \de t M(\de V) M(\de w) M(\de w')\,
  p^\delta_t(V,w,w') \log \frac {2\, p^{\delta}_t(V,w,w')}{
    \sqrt{h^{\delta}_t(v)h^{\delta}_t(v_*)
      h^{\delta}_t(v')h^{\delta}_t(v_*')} \, \bar\sigma(V;w,w')}\\
    &\leq \int \! \de t M(\de V) M(\de w) M(\de w')\,
  p_t(V,w,w') \log \frac {2\, p_t(V,w,w')}{
    \sqrt{h_t(v)h_t(v_*)
      h_t(v')h_t(v_*')} \, \bar\sigma(V;w,w')}.
    \end{split}\end{equation*}  
Observe that
\begin{equation*}\begin{split}
& \int \! \de t M(\de V) M(\de w) M(\de w')\,
  p^\delta_t(V,w,w') \log \frac {2\, p^{\delta}_t(V,w,w')}{
    \sqrt{h^{\delta}_t(v)h^{\delta}_t(v_*)
      h^{\delta}_t(v')h^{\delta}_t(v_*')} \, \bar\sigma(V;w,w')}\\
  & = \frac 1 2 \int \! \de t \de v  \de v_* \de w' \,q_t^\delta(v, v_*, w') \log
  \frac {2\, q_t^\delta(v,v_*,w')}{f_t^\delta(v)f_t^\delta(v_*)g_{1+\delta}(w')}\,\\
  & + \frac 1 2 \int \! \de t \de v \de v_* \de w' \,q_t^\delta(v, v_*, w') \log
  \frac {2 \, q_t^\delta(v,v_*,w')}{f_t^\delta(v')f_t^\delta(v'_*)g_{1+\delta}(w)}\,\\
 & +\frac 1 2 \int \! \de t \de v \de v_* \de w' \,q_t^\delta (v, v_*, w')\log \frac {g_{1+\delta}(w)g_{1+\delta}(w')}{B(v,v_*,w')^2}.
    \end{split}\end{equation*}  
  The proof is achieved by the same argument in Step 1 of Theorem
  \ref{the:approx}.

  \medskip
  \noindent {\bf Step 2 - Truncation of the flux.}
  As in the Theorem~\ref{the:approx} we now assume $f$ strictly
  positive on compacts uniformly in time and 
  $q_t^{(3)}\in L^1([0,T]; L^2(\bb R^d))$. We denote by 
  $(f^{\ell},q^{\ell})$ the sequence constructed in Step 2 in
  the proof of Theorem~\ref{the:approx}.

  By the argument in Step 2 of Theorem~\ref{the:approx},
  $\lim_{\ell} \mc H(\pi^\ell_0) =\mc H(\pi_0)$, we now show that 
  \begin{equation}
    \label{H2}
    \varlimsup_{\ell} \mc H(\pi^\ell_T) \le \mc H(\pi_T).
  \end{equation}
  Recalling \eqref{tildeell} we write
  \begin{equation*}
    \begin{split}
          f^\ell_T &= c_\ell f_0 +
    2c_\ell \left( \int_0^T \de s
          \big[ q^{(3)}_s-
      \tilde q^{\ell,(1)}_s \big]
    \right)  \id_{|v|\le \ell}
    \\
    & =
    \begin{cases}
     \displaystyle{  c_\ell f_T + 2c_\ell \int_0^T \de s
          \big[ q_s^{(1)} - \tilde q^{\ell,(1)}_s \big]}
          & \textrm{if $|v|\le \ell$} \\
      c_\ell f_0   & \textrm{if $|v|> \ell$}.  
    \end{cases}      
    \end{split}
  \end{equation*}
  Set 
  \begin{equation*}
    \hat{f}^\ell_T :=
    c_\ell a_\ell \Big\{ f_T + 2 \Big( \int_0^T\!\de s\,
    \big[ q_s^{(1)} - \tilde q^{\ell,(1)}_s \big] \big) \id_{|v|\le
      \ell} \Big\}
  \end{equation*}
  where $1/a_\ell = c_\ell + 1/c_\ell - 1$. Observe that
  $\hat{f}^\ell_T$ is a probability density. Since
  $0< c_\ell a_\ell < 1$, by using \eqref{bent} and applying the
  argument leading to \eqref{dario3}, we deduce that
  $\varlimsup_{\ell} \mc H(\hat{f}^\ell_T )\le \mc H({f}_T )$
  and 
  $\lim_{\ell} \big| \mc H({f}^\ell_T ) - \mc H(\hat {f}^\ell_T )
  \big| = 0$. This completes the proof of \eqref{H2}.

By  the representation of $\mc D$ provided by Lemma \ref{t:rdf}, using  \eqref{ui} and Fatou's lemma we conclude that 
  \begin{equation}\label{D2}
    \varlimsup_{\ell}\int_0^T \de t\,\mc D(\pi_t^{\ell})
    \leq \int_0^T \de t\, \mc D(\pi_t).
  \end{equation}
  
  It remains to  show that
  \begin{equation}\label{R2}
\varlimsup_{\ell} \mc R(\pi^\ell, Q^\ell)\leq \mc R(\pi, Q).
  \end{equation}  
  This  is  achieved by using  the representation \eqref{Re} and the argument in Step 2 of Theorem \ref{the:approx}.

  To prove the second statement of the theorem, observe that if  $I(\pi, Q)=0$ then $\pi$ has bounded second moment, $Q=Q^\pi$, and $H(\pi\vert m)=0$.
  Therefore  $\pi_0=m$ and, when  $\lambda$ is bounded, $(\pi, Q^\pi)\in \hat{\ms S}$. By \eqref{J=}, the inequality \eqref{epi} amounts to $J(\pi, Q^\pi)\leq 0$.
  \end{proof}

\section*{Acknowledgments}
We are grateful to R. Di Leonardo for suggesting us the example of molecular gases. We also thank M. Erbar for useful discussions.

\end{document}